\documentclass{amsart}

\usepackage{geometry, amssymb, verbatim, esint}
\usepackage{color}

\usepackage[colorlinks=true,urlcolor=blue, citecolor=red,linkcolor=blue,
linktocpage,pdfpagelabels, bookmarksnumbered,bookmarksopen]{hyperref}
\usepackage[hyperpageref]{backref}
\usepackage{tikz}
\usepackage{soul}
\usepackage{mathrsfs}
\newtheorem{teo}{Theorem}[section]
\newtheorem{lm}[teo]{Lemma}
\newtheorem{coro}[teo]{Corollary}
\newtheorem{prop}[teo]{Proposition}
\newtheorem*{main}{Main Theorem}
\theoremstyle{definition}
\newtheorem{defi}[teo]{Definition}

\newtheorem{rem}[teo]{Remark}
\newtheorem*{ack}{Acknowledgments}

\numberwithin{equation}{section}

\title[low eigenvalues]{Low eigenvalues of the $p-$Laplacian\\ in general open sets}
\date{\today}
\author[Brasco]{Lorenzo Brasco}
\address[L.\ Brasco]{Dipartimento di Matematica e Informatica
	\newline\indent
	Universit\`a degli Studi di Ferrara
	\newline\indent
	Via Machiavelli 35, 44121 Ferrara, Italy}
\email{lorenzo.brasco@unife.it}

\author[Briani]{Luca Briani}
\address[L.\ Briani]{School of Computation, Information and Technology,		\newline\indent
 	Technical University of Munich
 	\newline\indent
	Boltzmannstra\ss e 3, 85748 Garching bei M\"unchen, Germany.}
\email{luca.briani@tum.de}

\author[Prinari]{Francesca Prinari}

\address[F. Prinari]{Dipartimento di Scienze Agrarie, Alimentari e Agro-ambientali
\newline\indent 
Universit\`a di Pisa
\newline\indent
Via del Borghetto 80, 56124 Pisa, Italy}
\email{francesca.prinari@unipi.it}

\subjclass[2010]{35P30, 47J10, 49R05}
\keywords{Nonlinear eigenvalue problems, $p-$Laplacian, decay of eigenfunctions, essential spectrum, Persson's Theorem.}

\begin{document}

\begin{abstract}
We consider the minmax Ljusternik-Schnirelmann levels of the constrained $p-$Dirichlet integral, on a general open set of the Euclidean space. We show that, whenever one of these levels lies below the threshold given by the $L^p$ Poincar\'e constant ``at infinity'', it actually defines an eigenvalue of the Dirichlet $p-$Laplacian. We also prove an exponential decay at infinity for the relevant eigenfunctions: this can be seen as a \v{S}nol-Simon--type estimate for the nonlinear case. Finally, we exhibit some peculiar examples of unbounded open sets to which our main result applies.
\end{abstract}

\maketitle

\begin{center}
\begin{minipage}{10cm}
\small
\tableofcontents
\end{minipage}
\end{center}

\section{Introduction}

\subsection{Beneath the essential spectrum}
Let us start by considering the spectrum of the Dirichlet-Laplacian on an open set $\Omega\subseteq\mathbb{R}^N$. Whenever the embedding
\begin{equation}
\label{mimmergo}
W^{1,2}_0(\Omega)\hookrightarrow L^2(\Omega),
\end{equation}
is compact, we know that this spectrum is discrete, made of a diverging sequence of positive eigenvalues $\{\lambda_k(\Omega)\}_{k\in\mathbb{N}\setminus\{0\}}$, each one having a finite multiplicity. Characterizations of open sets for which the embedding \eqref{mimmergo} is compact can be found for example in \cite[Chapter 15]{Maz}.
\par
Moreover, in this situation we can build a system of orthonormal (in $L^2$ sense) associated eigenfunctions $\{u_k\}_{k\in\mathbb{N}\setminus\{0\}}\subseteq W^{1,2}_0(\Omega)$, which gives a Hilbertian basis of $L^2(\Omega)$. Accordingly, we can characterize the eigenvalues as follows
\[
\lambda_1(\Omega)=\inf_{\varphi\in W^{1,2}_0(\Omega)} \left\{\int_\Omega |\nabla\varphi|^2\,dx\, :\, \int_\Omega |\varphi|^2\,dx=1\right\},
\] 
and for $k\ge 2$
\[
\lambda_k(\Omega)=\inf_{\varphi\in W^{1,2}_0(\Omega)} \left\{\int_\Omega |\nabla\varphi|^2\,dx\, :\, \int_\Omega |\varphi|^2\,dx=1,\ \int_\Omega \varphi\,u_j\,dx=0, \ \text{for}\ j=1,\dots,k-1\right\}.
\]
Each eigenfunction $u_k$ is a minimizer of the corresponding problem.
See for example \cite[Chapter VI]{CH} or \cite[Chapter 4, Section 8]{BraBook} for these facts.
\par
We also recall that an alternative characterization of the eigenvalues is given by the well-known {\it Courant-Fischer formula}
\begin{equation}
\label{courant-fischer}
\lambda_k(\Omega)=\inf\left\{ \max_{\varphi\in K\cap \mathcal{S}_2(\Omega)} \int_\Omega|\nabla \varphi|^2\,dx\,:\, K \subseteq W^{1,2}_0(\Omega)\ \mbox{subspace with}\ \dim K=k \right\},
\end{equation}
where
\[
\mathcal{S}_2(\Omega):=\Big\{u\in L^2(\Omega)\, :\, \|u\|_{L^2(\Omega)}=1\Big\}.
\]
However, for a generic open set, it may happen that the embedding \eqref{mimmergo} fails to be compact. In this case, things become more complicated and intriguing: the spectrum of the Dirichlet-Laplacian stops being purely discrete (see \cite[Theorem 10.1.5]{BS}) and we have a non-empty {\it essential spectrum}, indicated by $\sigma_{\rm ess}(\Omega)$. 
\par
We recall that the latter can be characterized in terms of {\it singular constrained Palais-Smale sequences} (see for example \cite[Theorem 9.1.2]{BS} or \cite[Theorem 7.2]{HS}). Namely, we have that 
$\lambda\in \sigma_{\rm ess}(\Omega)$ if and only if there exists a sequence $\{\varphi_n\}_{n\in\mathbb{N}}\subseteq W^{1,2}_0(\Omega)$ such that
\begin{enumerate}
\item $\varphi_n\in\mathcal{S}_2(\Omega)$, for every $n\in\mathbb{N}$;
\vskip.2cm
\item $\lim\limits_{n\to\infty}\|\nabla \varphi_n\|_{L^2(\Omega)}^2=\lambda$;
\vskip.2cm
\item we have
\[
\lim_{n\to\infty} \|-\Delta \varphi_n-\lambda\,\varphi_n\|_{W^{-1,2}(\Omega)}=0;
\]
\item $\{\varphi_n\}_{n\in\mathbb{N}}$ weakly converges to $0$ in $L^2(\Omega)$.
\end{enumerate}
A celebrated result by Persson (see \cite[Theorem 2.1]{Pe}) gives a characterization of ``geometric''  flavour to the bottom of $\sigma_{\rm ess}(\Omega)$.
More precisely, if we introduce the {\it Poincar\'e constant ``at infinity''}
\[
\mathcal{E}(\Omega):=\sup_{R>0}\lambda_1\big(\Omega\setminus \overline{B_R}\big),
\]
then Persson's Theorem asserts that
\begin{equation}
\label{perso}
\inf \Big\{\lambda>0\, :\, \lambda\in\sigma_{\rm ess}(\Omega)\Big\}=\mathcal{E}(\Omega).
\end{equation}
Nevertheless, even when $\sigma_{\rm ess}(\Omega)\not=\emptyset$, it may happen that some discrete eigenvalues with finite multiplicities ``pop-up'' beneath the essential spectrum, i.e. there may exist some eigenvalues $\lambda<\mathcal{E}(\Omega)$. In this case, we have that these eigenvalues can still be characterized through the variational principles recalled above (see for example \cite[Theorems 10.2.1 \& 10.2.2]{BS}).

\par
We also recall that the corresponding eigenfunctions are strongly localized in space, in the sense that they exponentially ``fall-off'' at infinity.  In other words, they enjoy an estimate of the form
\begin{equation}
\label{falloff}
|u(x)|\le C\,e^{-\alpha\,|x|},\qquad \text{for}\ x\in\Omega,
\end{equation}
for suitable $C,\alpha>0$. One of the very first papers in the literature proving this kind of results is \cite{Sn} by \v{S}nol, for Schr\"odinger operators of the form $-\Delta+V$, with a non-negative potential $V$. We also wish to cite two papers \cite{Si, Si2} by Simon, which are connected with our main result presented below. 
\par
Nowadays, a result of the type \eqref{falloff} is a very particular instance of decay estimates that are usually named after Agmon, following his influential work \cite{Ag}. 
There, one can find a systematic study of decay estimates for eigenfunctions of $-\Delta+V$. Since then, it has become a classical subject in Spectral Theory. Many authors have contributed, by proving extensions, generalizations and refinements. Without any attempt of completeness, we mention the papers \cite{AH2OM} and \cite{CS}. An overview on Agmon estimates and their applications can be found in \cite{He1,He2}.
%

\subsection{Eigenvalues of the $p-$Laplacian}
The aim of this paper is to extend to the case of the Dirichlet $p-$Laplacian the previous analysis of 
\[
``\text{\it eigenvalues lying beneath the essential spectrum\,}''. 
\]
This sentence requires some precisions: at first, what it is intended by {\it eigenvalue} of the Dirichlet $p-$Laplacian on an open set $\Omega\subseteq\mathbb{R}^N$. From the point of view of Critical Point Theory, this is readily said: these eigenvalues can be understood as critical values of the functional
\[
\varphi\mapsto\int_\Omega |\nabla \varphi|^p\,dx,
\]
constrained to the ``sphere''
\[
\mathcal{S}_p(\Omega)=\Big\{u\in L^p(\Omega)\, :\, \|u\|_{L^p(\Omega)}=1\Big\}.
\]
The associated critical points are the eigenfunctions, accordingly.
Thus, in other words, eigenvalues of the Dirichlet $p-$Laplacian are those numbers $\lambda$ such that there exists $u\in W^{1,p}_0(\Omega)\setminus\{0\}$ weakly solving
\[
-\Delta_p u=\lambda\,|u|^{p-2}\,u,\qquad \text{in}\ \Omega,
\]
that is
\[
\int_\Omega \langle |\nabla u|^{p-2}\,\nabla u,\nabla\varphi\rangle\,dx=\lambda\,\int_\Omega |u|^{p-2}\,u\,\varphi,\qquad \text{for every}\ \varphi\in C^\infty_0(\Omega).
\]
Observe that the global infimum of the constrained Dirichlet integral
\[
\lambda_{1,p}(\Omega):=\inf_{\varphi\in W^{1,p}_0(\Omega)} \left\{\int_\Omega |\nabla\varphi|^p\,dx\, :\, \int_\Omega |\varphi|^p\,dx=1\right\},
\] 
corresponds to the {\it first eigenvalue} or {\it ground state energy}.
\par
On the other hand, what is intended by {\it essential spectrum} in this context is quite unclear. Nevertheless, by recalling Persson's result \eqref{perso} for the linear case, one might bravely guess that the {\it $L^p$ Poincar\'e constant at infinity}
\[
\mathcal{E}_{p}(\Omega):=\sup_{R>0}\lambda_{1,p}\big(\Omega\setminus \overline{B_R}\big),
\]
could be a valid surrogate for the infimum of the essential spectrum, in this nonlinear context. We are then lead to the following questions: is it possible to produce eigenvalues of the Dirichlet $p-$Laplacian below the threshold $\mathcal{E}_p(\Omega)$? Do the corresponding eigenfunctions enjoy a decay estimate similar to \eqref{falloff}? Attempting  to answer these questions, we will try to take very minimal assumptions on the open set $\Omega$.
\vskip.2cm\noindent
Let us start with the simpler case $\mathcal{E}_p(\Omega)=+\infty$: in this case, we have that the embedding
\begin{equation}
\label{mimmergop}
W^{1,p}_0(\Omega)\hookrightarrow L^p(\Omega),
\end{equation}
is compact, see for example Lemma \ref{lm:delcanto} below. In this scenario, it is well-known that we can produce a diverging sequence of positive eigenvalues by suitably mimicking the minmax Courant-Fischer procedure. This situation is very much studied in the literature, we just mention few classical references \cite{GP, Le, Szu}, the lecture notes \cite{Lin} and the PhD Thesis \cite{Fra}, where one can find many more references.
\par
We briefly recall the construction of this sequence of eigenvalues:
for every $k\in\mathbb{N}\setminus\{0\}$, we set
\[
\mathcal{W}_{k,p}(\Omega)=\Big\{K\subseteq \mathcal{S}_p(\Omega)\cap W^{1,p}_0(\Omega)\, :\, K\ \text{symmetric and compact},\, \gamma(K;W^{1,p}_0(\Omega))\ge k\Big\}.
\]
Here $\gamma(K;W^{1,p}_0(\Omega))$ indicates the {\it Krasnosel'ski\u{\i} genus} of $K$ in the norm topology of $W^{1,p}_0(\Omega)$, see \eqref{krasnodar} below for the definition.
With this class of compact sets at hand, we can define the {\it $k-$th minmax Ljusternik-Schnirelmann value} as follows
\begin{equation}
\label{minmaxLS}
\lambda_{k,p}^{\rm LS}(\Omega):=\inf_{K\in \mathcal{W}_{k,p}(\Omega)} \max_{\varphi\in K} \int_\Omega |\nabla \varphi|^p\,dx,\qquad k\in\mathbb{N}\setminus\{0\},
\end{equation}
and prove that this actually defines an eigenvalue, if the embedding \eqref{mimmergop} is compact. Let us notice that, for $k=1$, the previous definition boils down $\lambda_{1,p}(\Omega)$, i.e. it really gives the first eigenvalue. For this reason, in what follows we will omit the superscript ``${\rm LS}$'' in the case $k=1$. Moreover,  it is also possible to prove that, for  $k=2$, on a connected open set we have 
\[
\lambda_{2,p}^{\rm LS}(\Omega)=\min\Big\{\lambda>\lambda_{1,p}(\Omega)\,:\, \lambda\ \text{is an eigenvalue}\Big\},
\] 
see for instance \cite[Theorem 3.4]{JL}. 
\begin{rem}
In order to appreciate these apparently pedantic remarks about $k=1$ and $k=2$, the reader should keep in mind that it is still a major open problem to know whether $\{\lambda_{k,p}^{\rm LS}(\Omega)\}_{n\in\mathbb{N}\setminus\{0\}}$ exhaust the whole spectrum or not, at least in the case when \eqref{mimmergop} is compact. We also recall that it would be possible 
to define the previous minmax levels by means of another index $i$, in place of the Krasnosel'ski\u{\i} genus: for example, one could use the {\it $\mathbb{Z}_2-$cohomological index}
\cite{fadell_rabinowitz1977} or the {\it Ljusternik-Schnirelman Category} \cite[Chapter 2]{Str}.
\end{rem}
We then consider the case $\mathcal{E}_p(\Omega)<+\infty$: here the embedding \eqref{mimmergop} fails to be compact, see again Lemma \ref{lm:delcanto} below. Nevertheless, one could still define the minmax values \eqref{minmaxLS} and reasonably argue that whenever
\[
\lambda_{k,p}^{\rm LS}(\Omega)<\mathcal{E}_p(\Omega),
\]
we still have an eigenvalue, similarly to what happens in the linear case...
\subsection{Main result}
We will show that this is actually the case. Moreover, we will also get a \v{S}nol-Simon--type decay estimate for the relevant eigenfunction (we refer to Remark \ref{rem:SS} for the terminology). Namely, we prove the following
\begin{main}
Let $1<p<\infty$ and let $\Omega\subseteq\mathbb{R}^N$ be an open set. Let us suppose that there exists $k\in\mathbb{N}\setminus\{0\}$ such that
\begin{equation}\
\label{PL}
\lambda_{k,p}^{\rm LS}(\Omega)<\mathcal{E}_p(\Omega).
\end{equation}
Then, for every $\ell\in\{1,\dots,k\}$ the minmax value $\lambda_{\ell,p}^{\rm LS}(\Omega)$ is an eigenvalue of the Dirichlet $p-$Laplacian on $\Omega$. 
\par
Moreover, we have:
\begin{itemize}
\item if $\mathcal{E}_p(\Omega)<+\infty$,  for every $\ell\in\{1,\dots,k\}$ there exist two constants $C_\ell=C_\ell(N, p, \Omega)>0$ and $\alpha_\ell=\alpha_\ell(p,\Omega)>0$ such that for every eigenfunction $u_\ell\in W^{1,p}_0(\Omega)$ associated to $\lambda_{\ell,p}^{\rm LS}(\Omega)$, we have
\[
|u_\ell(x)|\le C_\ell\,\|u_\ell\|_{L^p(\Omega)}\,e^{-\alpha_\ell\,|x|}\,,\qquad \text{for every}\ x\in\Omega;
\]
\item if $\mathcal{E}_p(\Omega)=+\infty$, for every $\ell\in\mathbb{N}\setminus\{0\}$ and every $\alpha>0$ there exists a constant $C_\ell=C_\ell(N, p,\Omega,\alpha)>0$ such that 
\[
|u_\ell(x)|\le C_\ell\,\|u_\ell\|_{L^p(\Omega)}\,e^{-\alpha\,|x|}\,,\qquad \text{for every}\ x\in\Omega.
\]
\end{itemize}
\end{main}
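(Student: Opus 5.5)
The proof splits into two parts: existence of the eigenvalues, then decay of the eigenfunctions.

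\emph{Existence.} I would show that the constrained functional $\mathcal{F}(\varphi)=\int_\Omega|\nabla\varphi|^p\,dx$ on $\mathcal{M}=\mathcal{S}_p(\Omega)\cap W^{1,p}_0(\Omega)$ satisfies the Palais--Smale condition at every level $c<\mathcal{E}_p(\Omega)$. Standard deformation arguments for even $C^1$ functionals on a symmetric $C^1$ manifold then show that each $\lambda^{\rm LS}_{\ell,p}(\Omega)\le\lambda^{\rm LS}_{k,p}(\Omega)<\mathcal{E}_p(\Omega)$ is a critical value, hence an eigenvalue.

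\begin{enumerate}
\item Take a constrained Palais--Smale sequence $\{\varphi_n\}$ at level $c$. It is bounded in $W^{1,p}_0$. Up to a subsequence $\varphi_n\rightharpoonup u$ weakly and $\varphi_n\to u$ in $L^p_{\rm loc}$ by Rellich on balls; extend by zero to $\mathbb{R}^N$.
\item Fix $R$ and a cut-off $\eta_R$ with $\eta_R=0$ on $B_R$, $\eta_R=1$ outside $B_{2R}$ and $|\nabla\eta_R|\le 2/R$. Testing the almost-eigenvalue equation with $\eta_R^p\varphi_n$ and using that $\eta_R\varphi_n\in W^{1,p}_0(\Omega\setminus\overline{B_R})$ gives
\[
\lambda_{1,p}(\Omega\setminus\overline{B_R})\int_\Omega|\eta_R\varphi_n|^p\,dx\le c\int_\Omega|\eta_R\varphi_n|^p\,dx+O(1/R)+o_n(1).
\]
\item Choose $R$ large so that $\lambda_{1,p}(\Omega\setminus\overline{B_R})>c+\delta$. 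This yields uniform smallness of the $L^p$ tails of $\varphi_n$.
\item Tightness together with local compactness gives strong $L^p$ convergence, so $\|u\|_{L^p(\Omega)}=1$.
\item The $(S_+)$ property of $-\Delta_p$ upgrades this to strong $W^{1,p}_0$ convergence.
\end{enumerate}

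The case $\mathcal{E}_p=+\infty$ is the compact-embedding case (Lemma \ref{lm:delcanto}) and is classical.

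\emph{Decay.} Let $u$ be an eigenfunction with eigenvalue $\lambda<\mathcal{E}_p(\Omega)$. I would derive a Caccioppoli/Agmon-type weighted estimate.
\begin{enumerate}
\item Choose $R_0$ with $\lambda_{1,p}(\Omega\setminus\overline{B_{R_0}})\ge\lambda+\delta$. In the case $\mathcal{E}_p=+\infty$, $\delta$ can be taken arbitrarily large.
\item Test the equation with $\psi^p u$, where $\psi=\eta\,e^{\alpha\min(|x|,M)}$ and $\eta$ is the cut-off vanishing on $B_{R_0}$.
\item Use the Poincar\'e inequality on $\Omega\setminus\overline{B_{R_0}}$ for $\psi u$, together with the elementary inequality $|a+b|^p\le(1+\varepsilon)|a|^p+C_\varepsilon|b|^p$ to expand $|\nabla(\psi u)|^p$.
\item Since $|\nabla\psi|\le\alpha\psi$ outside the cut-off region, for $\alpha$ small relative to $\delta$ the $\alpha$-terms are absorbed. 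This gives
\[
\int_\Omega e^{p\alpha\min(|x|,M)}|u|^p\,dx\le C\,\|u\|^p_{L^p(\Omega)},
\]
uniformly in $M$; now let $M\to\infty$.
\end{enumerate}

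\emph{Pointwise bound.} I would use local $L^\infty$--$L^p$ estimates (Moser iteration or De Giorgi) for the subsolution $|u|$ extended by zero outside $\Omega$. This extension is a subsolution of $-\Delta_p v\le\lambda v^{p-1}$ in all of $\mathbb{R}^N$, which avoids any regularity assumption on $\partial\Omega$. The estimate gives $\|u\|_{L^\infty(B_1(x))}\le C(N,p,\lambda)\|u\|_{L^p(B_2(x))}$, and combining with the weighted integral bound yields $|u(x)|\le C\|u\|_{L^p}e^{-\alpha|x|}$.

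\emph{Main obstacle.} The delicate step is the nonlinear absorption in the weighted estimate. The equation is not linear in $\psi u$, so $|\nabla u|^{p-2}\nabla u\cdot\nabla(\psi^pu)$ must be compared with $|\nabla(\psi u)|^p$ through careful convexity inequalities. These have to produce constants depending only on $p$ and $\delta$, so that $\alpha$ depends on $(p,\Omega)$ only as claimed. I would try first a Picone-type or hidden-convexity inequality rather than the crude $\varepsilon$-expansion.
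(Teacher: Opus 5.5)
Your proof is correct, but the existence part follows a genuinely different route from the paper.

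\textbf{Existence.} The paper never checks the Palais--Smale condition for $\int_\Omega|\nabla\varphi|^p\,dx$ on $\mathcal{S}_p(\Omega)\cap W^{1,p}_0(\Omega)$. Instead it proceeds in three steps:
\begin{itemize}
\item it adds a vanishing strongly confining potential $\varepsilon_n V$ and uses the minmax theorem only in this fully compact setting;
\item it proves via Lemma \ref{lm:giggi} (approximating compact sets of genus $\ge k$ by finite-dimensional ones inside $W^{1,p}_0(\Omega;V)$) that the penalized levels converge to $\lambda^{\rm LS}_{k,p}(\Omega)$;
\item it recovers compactness of the penalized eigenfunctions from the uniform exponential decay of Proposition \ref{prop:lambdadecay}, then passes to the limit in the equation through local gradient convergence.
\end{itemize}
You instead apply the same Caccioppoli and Poincar\'e-at-infinity tail estimate directly to constrained Palais--Smale sequences. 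This gives $(PS)_c$ for every $c<\mathcal{E}_p(\Omega)$, and you conclude by Ljusternik--Schnirelmann theory on the $C^1$ manifold $\mathcal{S}_p(\Omega)\cap W^{1,p}_0(\Omega)$. The deformation argument behind Szulkin's minmax theorem, already used in Proposition \ref{prop:approssimazione}, only needs $(PS)_c$ at the minmax level under consideration, so this closes the argument.

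Your route is shorter and dispenses with Lemma \ref{lm:giggi} and the penalization entirely. It also shows that, for $p\neq 2$, compactness below the threshold needs no orthogonality-based characterization. The paper's route stays within classical compact minmax theory. In exchange it produces the approximation result \eqref{vaiaffa} and the convergence of penalized eigenfunctions, which it reuses in Appendix \ref{sec:B}.

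\textbf{Decay.} Your weighted Agmon estimate with weight $e^{\alpha\min\{|x|,M\}}$ is equivalent to the paper's annular iteration $A(R+1)\le \frac{C_4}{1+C_4}\,A(R)$. The ``main obstacle'' you anticipate does not arise. Young's inequality on the cross term, together with $|a+b|^p\le(1+\varepsilon)|a|^p+C_\varepsilon|b|^p$ (exactly \eqref{colbaffo} and Lemma \ref{lm:triangolare}), gives a factor $\frac{1+\varepsilon}{1-(p-1)\tau}\,\lambda\le\lambda+\delta/2$. The remainder satisfies $|\nabla\psi|^p\le 2^{p-1}\big(\alpha^p\psi^p+|\nabla\eta|^p e^{p\alpha\min\{|x|,M\}}\big)$ and is absorbed once $\alpha$ is small in terms of $p,\lambda,\delta$. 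So the crude expansion suffices and no Picone-type identity is needed.

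\textbf{Pointwise bound.} Extending $|u|$ by zero to a subsolution in $\mathbb{R}^N$ and applying local Moser iteration is a self-contained substitute for the $L^\infty$--$L^p$ estimate at infinity that the paper imports. To state the bound for every $x\in\Omega$ rather than almost every $x$, you still need continuity of eigenfunctions, as the paper notes.
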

The rest of this subsection is devoted to a couple of (detailed) comments on this result. We start with the exponential fall-off.
\begin{rem}[Fall-off]
\label{rem:SS}
As for the ``compact case'' $\mathcal{E}_p(\Omega)=+\infty$, our decay estimate represents an extension to the nonlinear case of \cite[Theorem 2]{Si2} by Simon, recalled above. The latter deals with Schr\"odinger operators $-\Delta+V$ on the whole $\mathbb{R}^N$, under the effect of a non-negative strongly confining potential $V$ (see Definition \ref{defi:scp} below).
\par
In the ``non-compact case'' $\mathcal{E}_p(\Omega)<+\infty$, our estimate is a nonlinear variation on the classical result by \v{S}nol, previously recalled. In particular, we have that
\[
\alpha_k\le \dots\le \alpha_1,
\]
with the notation above.
That is, the higher the eigenvalue, the slower the exponential decay. Moreover, these exponents deteriorate (i.e. they approach $0$) as the eigenvalues approach the spectral threshold $\mathcal{E}_p(\Omega)$: we refer to Proposition \ref{prop:lambdadecay} below, for this fact. 
This behaviour is in perfect accordance with \v{S}nol's result.
Indeed, in \cite[Theorem pag. 282]{Sn}, this estimate is proved for eigenfunctions of $-\Delta+V$, corresponding to an eigenvalue $\lambda$ below of the essential spectrum $\mathcal{E}_V$. Also in \cite{Sn}, the exponent $\alpha$ dictating the fall-off depends on the difference $\mathcal{E}_V-\lambda$ and it goes to $0$, as this difference goes to $0$. More precisely, \v{S}nol's exponent is given by 
\[
\alpha=\log\left(1+\frac{\mathcal{E}_V-\lambda}{K}\right),
\]
for constant $K$ depending on the (positive) potential $V$.
 For these reasons, in what follows we will refer to the estimate of the Main Theorem as {\it \v{S}nol-Simon--type decay estimate}.
\par
Finally, we wish to point out that our method of proof is different from those of both \cite{Si2} and \cite{Sn} and based on nonlinear techniques: we first obtain exponential decay in $L^p$ norm and then use an $L^\infty-L^p$ estimate ``localized at infinity'' to transform it into a pointwise decay estimate. The decay in $L^p$ norm is obtained by combining Caccioppoli and Poincar\'e inequalities, while the second estimate is a classical tool in the De Giorgi-Moser regularity theory for quasilinear equations.
\par 
It is worth mentioning that, once we obtain decay estimates on the eigenfunctions, it is possible to exploit the equation so to obtain some decay and integrability estimates on their gradients, as well (see Corollary \ref{coro:gradienti} below).
\end{rem}
We then briefly comment on how we prove the existence part of the Main Theorem.
\begin{rem}[Existence of eigenfunctions]
There are various ways to get existence of eigenfunctions. Of course, we can focus on the case $\mathcal{E}_p(\Omega)<+\infty$, in light of what we have previously recalled. In the Hilbertian setting (i.\,e. for $p=2$), a purely variational approach based on concentration-compactness arguments is provided by Smets, see \cite[Theorem 4.1 $\&$ Theorem 4.2]{Sm}, for a slightly different eigenvalue problem. His approach relies on the possibility to characterize higher eigenvalues as an infimum, possibly on the orthogonal complement of a finite dimensional vector space. This fact is true for $p=2$, but it has no counterpart for $p \neq 2$. For this reason, it is less clear whether this approach can be applied for the full range of $p$, see for instance \cite[Remark 4.4 (b)]{Sm}. 
\par
We briefly explain our route. We use a {\it spectral stability argument}, i.e. we prove that our minmax Ljusternik-Schnirelmann values can be approximated by the eigenvalues of a perturbed operator, having good compactness properties.
More precisely, we ``compactify'' the original eigenvalue problem by considering a new family of problems, each one obtained by adding a strongly confining potential $V_n$. For this family, we can prove existence of infinitely many ``perturbed'' eigenvalues, by the usual minmax procedure recalled above. Moreover, if $V_n$ tends to $0$ (in a suitable sense) as $n$ goes to $\infty$, we can prove that these eigenvalues converge to the original minmax Ljusternik-Schnirelmann values. Then, in order to prove that those lying below $\mathcal{E}_p(\Omega)$ actually define an eigenvalue, we aim to infer strong convergence in $L^p$ of the ``perturbed'' eigenfunctions. Here, we crucially exploit that these ``perturbed'' eigenfunctions enjoy a uniform \v{S}nol-Simon decay estimate, whenever they are associated to a ``perturbed'' eigenvalue which is converging to a $\lambda_{k,p}^{\rm LS}(\Omega)<\mathcal{E}_p(\Omega)$. The uniform fall-off in turn implies an $L^p$ equi-tightness property, which permits to appeal to the classical Riesz-Fr\'echet-Kolmogorov compactness criterion. In a nutshell, we obtain compactness through regularity. 
\par
In the language of Calculus of Variations, this is essentially a $\Gamma-$convergence--type argument in disguise, but used for higher critical points, other than the global minimum (see \cite{Brai,Dal} for the general theory of $\Gamma-$convergence). In doing this, we crucially borrow some ideas from the paper \cite{CD} by Champion and De Pascale.
\end{rem}

\subsection{Plan of the paper}
The paper starts with some preliminary materials contained in Section \ref{sec:2}. The main content of Section \ref{sec:3} is the \v{S}nol-Simon--type estimate of Proposition \ref{prop:lambdadecay}, which holds more generally for non-negative {\it subsolutions} of the eigenvalue equation. As a consequence, we also obtain some decay and integrability properties for their gradients. Section \ref{sec:4} is devoted to the ``perturbed'' eigenvalue problem described above, obtained by adding a strongly confining potential. Here, the technical Lemma \ref{lm:giggi} may be of some independent interest. The proof of the Main Theorem is contained in Section \ref{sec:5}, while in Section \ref{sec:6} we provide some examples of unbounded open sets to which our result applies. The paper ends with two appendices: in particular, in Appendix \ref{sec:B} we show that, for $p=2$, the minmax Ljusternik-Schnirelmann values below the essential spectrum coincide with the eigenvalues given by the Courant-Fischer formula.

\begin{ack}
We thank Bernard Helffer for some discussions on Agmon--type estimates and for providing us a copy of his papers \cite{He1, He2}.
We also thank Giovanni Franzina for some discussions on nonlinear eigenvalue problems and for a detailed reading of a preliminary draft of this paper.
\par
L.\,Briani and F.\, Prinari are both members of the {\it Gruppo Nazionale per l'Analisi Matematica, la Probabilit\`a
e le loro Applicazioni} (GNAMPA) of the Istituto Nazionale di Alta Matematica (INdAM). They both gratefully acknowledge the financial support of the project GNAMPA 2026  ``Problemi di ottimizzazione di forma in contesti anisotropi e non-locali" ({\tt  CUP E53C25002010001}).
\par
L.\,Brasco has been financially supported by the {\it Fondo di Ateneo per la Ricerca} FAR 2024 and the {\it Fondo per l'Incentivazione alla Ricerca Dipartimentale} FIRD 2025 of the University of Ferrara.
The research of L.\,Briani has been supported by the DFG through the Emmy Noether Programme (project number 509436910).    
\end{ack}

\section{Preliminaries}
\label{sec:2}

\subsection{Notation}
We will always denote by $N\in\mathbb{N}\setminus\{0\}$ the dimension of the ambient space. For $x_0\in \mathbb{R}^N$ and $R>0$, we set
\[
B_R(x_0)=\Big\{x\in\mathbb{R}^N\, :\, |x-x_0|<R\Big\}.
\] 
The writing $B_R$ will stand for the previous set, in the particular case where $x_0$ equals the origin.
\vskip.2cm\noindent
We recall the definition of {\it Krasnosel'ski\u{\i} genus}.
For every symmetric closed subset $A$ of a Banach space $X$, this is defined by
\begin{equation}
\label{krasnodar}
\gamma(A;X)=\inf\left\{k\in\mathbb{N}\, :\, \exists\ \text{a continuous odd map}\ f:A\to\mathbb{S}^{k-1}\right\},
\end{equation}
with the convention that $\gamma(A;X)=+\infty$, if no such an integer $k$ exists, and that $\gamma(\emptyset;X)=0$. We refer to \cite[Chapter II, Section 5]{Str} for more details on the genus.
\vskip.2cm\noindent
For an open set $\Omega\subseteq\mathbb{R}^N$ and an exponent $1<p<\infty$, we indicate by $W^{1,p}(\Omega)$ tha standard Sobolev space
\[
W^{1,p}(\Omega)=\Big\{\varphi\in L^p(\Omega)\, :\, \nabla \varphi\in L^p(\Omega)\Big\},
\]
endowed with the norm
\[
\|\varphi\|_{W^{1,p}(\Omega)}=\left(\|\varphi\|_{L^p(\Omega)}^p+\|\nabla\varphi\|_{L^p(\Omega)}^p\right)^\frac{1}{p},\qquad \text{for every}\ \varphi\in W^{1,p}(\Omega).
\]
By $W^{1,p}_0(\Omega)$ we mean the closure of $C^\infty_0(\Omega)$ in $W^{1,p}(\Omega)$. For $1<p<N$, we denote by $p^*$ the Sobolev exponent, defined by
\[
p^*=\frac{N\,p}{N-p}.
\]
\subsection{Some Poincar\'e inequalities}
We start with the following simple Poincar\'e--type inequality. The result is well-known, but since the case $p\ge N$ is often a bit overlooked in the literature, we prefer to give the details. The reader familiar with the subject can certainly skip it.
\begin{lm}
\label{lm:pinpon}
Let $1<p<\infty$ and let $E\subseteq\mathbb{R}^N$ be a measurable set with finite measure. Then, for every $\varphi\in C^\infty_0(\mathbb{R}^N)$ and every $\varepsilon>0$ we have:
\begin{itemize}
\item if $1<p<N$
\[
\int_E |\varphi|^p\,dx\le C\,|E|^\frac{p}{N}\,\int_{\mathbb{R}^N} |\nabla \varphi|^p\,dx;
\] 
\item if $p=N$
\[
\int_{E} |\varphi|^N\,dx\le \frac{C}{\varepsilon}\,|E|\,\int_{\mathbb{R}^N}|\nabla\varphi|^N\,dx+C\,\varepsilon\,\int_{\mathbb{R}^N} |\varphi|^N\,dx;
\]
\item if $p>N$
\[
\int_{E} |\varphi|^p\,dx\le \frac{C}{\varepsilon^\frac{p-N}{N}}\,|E|^\frac{p}{N}\,\int_{\mathbb{R}^N}|\nabla \varphi|^p+C\,\varepsilon\,\int_{\mathbb{R}^N} |\varphi|^p\,dx,
\]
\end{itemize}
for a constant $C=C(N,p)>0$.
\end{lm}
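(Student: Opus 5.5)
Every case follows from Hölder's inequality on $E$ combined with a Sobolev--type inequality on the whole $\mathbb{R}^N$; only the choice of Sobolev inequality changes.

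\emph{Case $1<p<N$.} By Hölder's inequality with exponents $p^*/p$ and $N/p$,
\[
\int_E |\varphi|^p\,dx\le |E|^{\frac{p}{N}}\left(\int_{\mathbb{R}^N}|\varphi|^{p^*}\,dx\right)^{\frac{p}{p^*}}.
\]
The Gagliardo--Nirenberg--Sobolev inequality $\|\varphi\|_{L^{p^*}}\le C\,\|\nabla\varphi\|_{L^p}$ then closes the estimate.

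\emph{Case $p>N$.} Take $q>p$ large, for instance $q=2p$. Hölder gives $\int_E|\varphi|^p\le |E|^{1-p/q}\,\|\varphi\|_{L^q}^p$, which carries the wrong power of $|E|$. A cleaner route uses Morrey's embedding on the whole space,
\[
\|\varphi\|_{L^\infty}\le C\,\|\nabla \varphi\|_{L^p}^{\frac{N}{p}}\,\|\varphi\|_{L^p}^{1-\frac{N}{p}}.
\]
This is the Gagliardo--Nirenberg interpolation, and it can be obtained by scaling from $\|\varphi\|_{L^\infty}\le C(\|\varphi\|_{L^p}+\|\nabla\varphi\|_{L^p})$. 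Then
\[
\int_E|\varphi|^p\le |E|\,\|\varphi\|_\infty^p\le C\,|E|\,\|\nabla\varphi\|_p^{N}\,\|\varphi\|_p^{p-N}.
\]
Apply Young's inequality with exponents $p/N$ and $p/(p-N)$, weighted by $\varepsilon$:
\[
ab\le \varepsilon\, b^{\frac{p}{p-N}}+C\,\varepsilon^{-\frac{p-N}{N}}\,a^{\frac{p}{N}},
\]
with $a=|E|\,\|\nabla\varphi\|_p^N$ and $b=\|\varphi\|_p^{p-N}$. This gives exactly $C\varepsilon^{-(p-N)/N}|E|^{p/N}\|\nabla\varphi\|_p^p+C\varepsilon\|\varphi\|_p^p$, which is the stated form.

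\emph{Case $p=N$.} Here there is no $L^\infty$ embedding, so use an interpolation of Ladyzhenskaya type instead. For $q=2N$, Gagliardo--Nirenberg gives
\[
\|\varphi\|_{L^{2N}}\le C\,\|\nabla\varphi\|_{L^N}^{\frac12}\,\|\varphi\|_{L^N}^{\frac12}.
\]
This can be proved by applying the $W^{1,1}\hookrightarrow L^{N/(N-1)}$ inequality to $|\varphi|^{s}$ for a suitable $s$, then Hölder, then scaling. Hölder then yields
\[
\int_E|\varphi|^N\le |E|^{\frac12}\,\|\varphi\|_{2N}^N\le C\,|E|^{\frac12}\,\|\nabla\varphi\|_N^{\frac N2}\,\|\varphi\|_N^{\frac N2}.
\]
Young's inequality with exponents $2,2$ gives $\frac{C}{\varepsilon}\,|E|\,\|\nabla\varphi\|_N^N+C\varepsilon\,\|\varphi\|_N^N$, as required.

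The main obstacle is establishing the borderline interpolation inequality with the correct exponents, in particular for $p=N$. I would derive it from the $L^1$ Sobolev inequality applied to a power of $|\varphi|$, and fix the exponents by dilation invariance. All remaining steps are routine applications of Hölder and Young.
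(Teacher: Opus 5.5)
Your proposal is correct and follows the paper's proof essentially verbatim. The three cases match: H\"older with $q=p^*$ plus Sobolev for $p<N$; H\"older with $q=2N$, the Ladyzhenskaya-type interpolation and Young with exponents $2,2$ for $p=N$; and H\"older with $q=\infty$, the interpolated Morrey inequality and Young with exponents $p/N$, $p/(p-N)$ for $p>N$. The only difference is that the paper cites the two interpolation inequalities rather than sketching their derivations. One harmless inaccuracy: your aside that $q=2p$ gives ``the wrong power of $|E|$'' is not right, because after Gagliardo--Nirenberg and Young, scaling always produces $|E|^{p/N}$; you do not use that route, so nothing is affected.
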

\begin{proof}
We first observe that for every $p<q\le \infty$, by H\"older's inequality we have\footnote{In the case $q=\infty$, it is intended that $p/q=0$.}
\begin{equation}
\label{holder}
\int_{E} |\varphi|^p\,dx\le |E|^{1-\frac{p}{q}}\,\|\varphi\|^p_{L^q(\mathbb{R}^N)}.
\end{equation}
We now proceed by distinguishing the three cases: $1<p<N$, $p=N$ and $p>N$.
\vskip.2cm\noindent
{\it Case $1<p<N$}. We choose $q=p^*$ in the previous inequality. Then, by Sobolev inequality we get
\[
\begin{split}
\int_{E} |\varphi|^p\,dx
&\le |E|^{1-\frac{p}{p^*}}\,\frac{1}{T_{N,p}}\,\int_{\mathbb{R}^N} |\nabla \varphi|^{p}\,dx,
\end{split}
\]
and we can conclude.
\vskip.2cm\noindent
{\it Case $p=N$}. By Ladyzhenskaya's inequality (see for example \cite[Theorem 3.6.4]{BraBook}), we have
\[
\|\varphi\|^N_{L^{2N}(\mathbb{R}^N)}\le\,\mathcal{L}_N\,\left(\int_{\mathbb{R}^N}|\nabla\varphi|^N\,dx\right)^\frac{1}{2}\,\left(\int_{\mathbb{R}^N} |\varphi|^N\,dx\right)^\frac{1}{2}.
\]
Thus, from \eqref{holder} with $q=2\,N$, we get
\[
\int_{E} |\varphi|^N\,dx\le |E|^\frac{1}{2}\,\mathcal{L}_N\,\left(\int_{\mathbb{R}^N}|\nabla\varphi|^N\,dx\right)^\frac{1}{2}\,\left(\int_{\mathbb{R}^N} |\varphi|^N\,dx\right)^\frac{1}{2}.
\]
By using Young's inequality, we get for every $\varepsilon>0$
\[
\int_{E} |\varphi|^N\,dx\le \frac{|E|\,\mathcal{L}_N}{2\,\varepsilon}\,\int_{\mathbb{R}^N}|\nabla\varphi|^N\,dx+\frac{\varepsilon\,\mathcal{L}_N}{2}\,\int_{\mathbb{R}^N} |\varphi|^N\,dx,
\]
as desired.
\vskip.2cm\noindent
{\it Case $p>N$}.
This is similar to the previous case. We can use Morrey's inequality (see for example \cite[Theorem 3.6.8]{BraBook})
\[
\|\varphi\|_{L^\infty(\mathbb{R}^N)}\le \mathcal{M}_N\,\|\varphi\|_{L^p(\mathbb{R}^N)}^{1-\frac{N}{p}}\,\|\nabla \varphi\|_{L^p(\mathbb{R}^N)}^\frac{N}{p}.
\]
Then, by \eqref{holder} with $q=\infty$, we get
\[
\int_{E} |\varphi|^p\,dx\le |E|\,\mathcal{M}^p_{N,p}\,\|\varphi\|_{L^p(\mathbb{R}^N)}^{p-N}\,\|\nabla \varphi\|_{L^p(\mathbb{R}^N)}^N.
\]
Again by Young's inequality, for every $\varepsilon>0$ we get
\[
\int_{E} |\varphi|^p\,dx\le \frac{N}{p}\,|E|^\frac{p}{N}\,\mathcal{M}^p_{N,p}\,\varepsilon^{-\frac{p-N}{N}}\,\int_{\mathbb{R}^N}|\nabla \varphi|^p+\frac{p-N}{p}\,\mathcal{M}_{N,p}^p\,\varepsilon\,\int_{\mathbb{R}^N} |\varphi|^p\,dx.
\]
This concludes the proof.
\end{proof}
The following weighted Poincar\'e-type inequality will be useful.
\begin{lm}
\label{lm:poincare}Let $V\in L^1_{\rm loc}(\mathbb{R}^N)$ be a non-negative function, such that there exist a constant $\alpha>0$ and a radius $R>0$ for which we have
\begin{equation}
\label{Vlouer}
V(x)\ge \alpha,\qquad \text{for a.\,e.}\ x\in\mathbb{R}^N\setminus B_R.
\end{equation}
Let $1<p<\infty$, then there exists a constant $C_{N,p,R,\alpha}>1$ such that we have 
\[
\frac{1}{C_{N,p,R,\alpha}}\,\int_{\mathbb{R}^N} |\varphi|^p\,dx\le \int_{\mathbb{R}^N} |\nabla \varphi|^p\,dx+\int_{\mathbb{R}^N} V\,|\varphi|^p\,dx,\qquad \text{for every}\ \varphi\in C^\infty_0(\mathbb{R}^N).
\]
\end{lm}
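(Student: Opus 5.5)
The idea is to split $\int_{\mathbb{R}^N}|\varphi|^p\,dx$ into the contributions of $B_R$ and of $\mathbb{R}^N\setminus B_R$. On the exterior region the potential does all the work: by \eqref{Vlouer},
\[
\int_{\mathbb{R}^N\setminus B_R}|\varphi|^p\,dx\le \frac{1}{\alpha}\int_{\mathbb{R}^N\setminus B_R} V\,|\varphi|^p\,dx\le \frac{1}{\alpha}\int_{\mathbb{R}^N} V\,|\varphi|^p\,dx,
\]
where we used that $V\ge 0$. For the ball we apply Lemma \ref{lm:pinpon} with $E=B_R$, which has finite measure $|B_R|=\omega_N\,R^N$, and we distinguish the same three cases as there.

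If $1<p<N$, Lemma \ref{lm:pinpon} gives directly
\[
\int_{B_R}|\varphi|^p\,dx\le C\,|B_R|^{p/N}\int_{\mathbb{R}^N}|\nabla\varphi|^p\,dx,
\]
and summing with the exterior estimate closes the argument.

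If $p\ge N$, the estimate of Lemma \ref{lm:pinpon} contains the extra term $C\,\varepsilon\int_{\mathbb{R}^N}|\varphi|^p\,dx$. We choose $\varepsilon$ so that $C\,\varepsilon=1/2$, with $C=C(N,p)$ the constant of that lemma. Summing the two pieces then gives
\[
\int_{\mathbb{R}^N}|\varphi|^p\,dx\le C'\,|B_R|^{p/N}\int_{\mathbb{R}^N}|\nabla\varphi|^p\,dx+\frac{1}{2}\int_{\mathbb{R}^N}|\varphi|^p\,dx+\frac{1}{\alpha}\int_{\mathbb{R}^N} V\,|\varphi|^p\,dx,
\]
with $C'=C'(N,p)$. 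Since $\varphi\in C^\infty_0(\mathbb{R}^N)$, the quantity $\int_{\mathbb{R}^N}|\varphi|^p\,dx$ is finite, so the term $\frac12\int|\varphi|^p$ can be absorbed into the left-hand side. This is the only step needing any care, and the finiteness just noted is exactly what makes the absorption legitimate.

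In every case we arrive at
\[
\int_{\mathbb{R}^N}|\varphi|^p\,dx\le C''\left(\int_{\mathbb{R}^N}|\nabla\varphi|^p\,dx+\int_{\mathbb{R}^N}V\,|\varphi|^p\,dx\right),
\]
with $C''$ depending only on $N,p,R,\alpha$; for instance $C''=2\max\{C'\,(\omega_N R^N)^{p/N},\,1/\alpha\}$ works. Taking $C_{N,p,R,\alpha}=\max\{C'',2\}>1$ gives the claim.
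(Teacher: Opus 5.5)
Your proposal is correct and follows the paper's proof exactly: you bound the exterior part via $V\ge\alpha$, handle $B_R$ with Lemma \ref{lm:pinpon} in the three ranges of $p$, and for $p\ge N$ choose $\varepsilon=1/(2C)$ to absorb $\frac12\int_{\mathbb{R}^N}|\varphi|^p\,dx$. Your explicit constant $C_{N,p,R,\alpha}=\max\{C'',2\}$ is a harmless bookkeeping addition that the paper leaves implicit.
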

\begin{proof}
We take $\varphi\in C^\infty_0(\mathbb{R}^N)$ and decompose the integral over $\mathbb{R}^N$ as follows
\[
\int_{\mathbb{R}^N} |\varphi|^p\,dx=\int_{B_R} |\varphi|^p\,dx+\int_{\mathbb{R}^N\setminus B_R} |\varphi|^p\,dx.
\]
For the second integral, we simply use \eqref{Vlouer}, thus we get
\[
\int_{\mathbb{R}^N} |\varphi|^p\,dx\le \int_{B_R} |\varphi|^p\,dx+ \frac{1}{\alpha}\,\int_{\mathbb{R}^N\setminus B_R} V\,|\varphi|^p\,dx.
\]
In order to estimate the integral on $B_R$, we can appeal to Lemma \ref{lm:pinpon}. The case $1<p<N$ is immediate, we consider the remaining cases.
\vskip.2cm\noindent
{\it Case $p=N$}. By Lemma \ref{lm:pinpon}, we have for every $\varepsilon>0$
\[
\int_{\mathbb{R}^N} |\varphi|^N\,dx\le \frac{C}{\varepsilon}\,|B_R|\,\int_{\mathbb{R}^N}|\nabla\varphi|^N\,dx+C\,\varepsilon\,\int_{\mathbb{R}^N} |\varphi|^N\,dx+ \frac{1}{\alpha}\,\int_{\mathbb{R}^N\setminus B_R} V\,|\varphi|^N\,dx.
\]
By choosing $\varepsilon=1/(2C)$, we can absorb the $L^N$ norm of $\varphi$ and conclude.
\vskip.2cm\noindent
{\it Case $p>N$}. This is similar to the previous case. Still thanks to Lemma \ref{lm:pinpon} we have
\[
\int_{\mathbb{R}^N} |\varphi|^p\,dx\le \frac{C}{\varepsilon^\frac{p-N}{N}}\,|B_R|^\frac{p}{N}\,\int_{\mathbb{R}^N}|\nabla \varphi|^p+C\,\varepsilon\,\int_{\mathbb{R}^N} |\varphi|^p\,dx+ \frac{1}{\alpha}\,\int_{\mathbb{R}^N\setminus B_R} V\,|\varphi|^N\,dx.
\]
By choosing again $\varepsilon=1/(2C)$, we conclude.
\end{proof}
\begin{rem}
The precise form the constant $C_{N,p,R,\alpha}$ above is not very important for our scopes. We only stress that it exhibits the following natural behaviours
\[
\lim_{R\nearrow +\infty}C_{N,p,R,\alpha}=+\infty=\lim_{\alpha\searrow 0}C_{N,p,R,\alpha}.
\] 
\end{rem}
We then give a simple characterization of the compact embedding $W^{1,p}_0(\Omega)\hookrightarrow L^p(\Omega)$ in terms of $\mathcal{E}_p(\Omega)$. 
\begin{lm}
\label{lm:delcanto}
Let $1<p<\infty$ and let $\Omega\subseteq\mathbb{R}^N$ be an open set. Then
\[
W^{1,p}_0(\Omega)\hookrightarrow L^p(\Omega)\ \text{is compact}\qquad \Longleftrightarrow\qquad \mathcal{E}_p(\Omega)=+\infty.
\]
\end{lm}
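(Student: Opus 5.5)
We prove the two implications separately, using only Lemma \ref{lm:pinpon} and the Rellich--Kondrachov theorem on bounded sets.

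\emph{Compactness implies $\mathcal{E}_p(\Omega)=+\infty$.} We argue by contradiction and assume $\mathcal{E}_p(\Omega)=\Lambda<+\infty$. By monotonicity of $R\mapsto\lambda_{1,p}(\Omega\setminus\overline{B_R})$ we then have $\lambda_{1,p}(\Omega\setminus\overline{B_R})\le\Lambda$ for every $R>0$.

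First, the set $\Omega\setminus\overline{B_R}$ is nonempty for every $R$. Otherwise we would have $\lambda_{1,p}=+\infty$ by the usual convention on the empty set, and so $\mathcal{E}_p=+\infty$.

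We then build a sequence inductively. Set $R_1=1$. Given $R_n$, pick $\varphi_n\in C^\infty_0(\Omega\setminus\overline{B_{R_n}})$ with
\[
\|\varphi_n\|_{L^p}=1,\qquad \int|\nabla\varphi_n|^p\le\Lambda+1,
\]
and choose $R_{n+1}>R_n$ so large that ${\rm supp}\,\varphi_n\subseteq B_{R_{n+1}}$.

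The functions $\varphi_n$ have pairwise disjoint supports. Hence
\[
\|\varphi_n-\varphi_m\|_{L^p}^p=2\qquad\text{for every } n\neq m,
\]
so no subsequence is Cauchy in $L^p(\Omega)$. At the same time, the sequence is bounded in $W^{1,p}_0(\Omega)$. This contradicts compactness of the embedding.

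\emph{$\mathcal{E}_p(\Omega)=+\infty$ implies compactness.} Take a sequence $\{u_n\}$ bounded in $W^{1,p}_0(\Omega)$, with $\|u_n\|_{W^{1,p}}\le M$. By reflexivity we may extract a subsequence converging weakly to some $u$. The plan is to show that this subsequence is Cauchy in $L^p(\Omega)$.

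The key step is a uniform smallness estimate near infinity. Fix $R>0$ and a cutoff $\eta_R\in C^\infty(\mathbb{R}^N)$ with
\[
0\le\eta_R\le1,\qquad \eta_R=0 \text{ on } B_R,\qquad \eta_R=1 \text{ outside } B_{2R},\qquad |\nabla\eta_R|\le 2/R.
\]
Then $\eta_R\,u_n\in W^{1,p}_0(\Omega\setminus\overline{B_R})$, and its gradient is bounded by a constant times $M$, uniformly in $R\ge1$. By the definition of $\lambda_{1,p}$ we get
\[
\int_{\Omega\setminus B_{2R}}|u_n|^p\le \int|\eta_R u_n|^p\le \frac{C\,M^p}{\lambda_{1,p}(\Omega\setminus\overline{B_R})}.
\]
Since $\mathcal{E}_p(\Omega)=+\infty$, the right-hand side tends to $0$ as $R\to\infty$, uniformly in $n$. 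This is the equi-tightness property.

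On the bounded set $\Omega\cap B_{2R}$, we extend $u_n$ by zero to a ball containing $B_{2R}$. Rellich--Kondrachov for $W^{1,p}_0$ of that ball then gives strong $L^p$ convergence, up to a subsequence, on $B_{2R}$. A diagonal argument over $R=j\in\mathbb{N}$, combined with the tail estimate, shows that the subsequence is Cauchy in $L^p(\Omega)$.

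The main obstacle is exactly this tail estimate. The subtle point is that the constant coming from $\nabla\eta_R$ must not blow up; this is why we take $R\ge1$. The convention $\lambda_{1,p}(\emptyset)=+\infty$ covers bounded $\Omega$, where the tail vanishes trivially.
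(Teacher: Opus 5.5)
Your proof is correct.

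\emph{Compactness implies $\mathcal{E}_p(\Omega)=+\infty$.} Here you follow the paper's contrapositive argument: unit-norm test functions with bounded energy, supported in $\Omega\setminus\overline{B_R}$, with $R\to\infty$. Choosing the radii inductively so that the supports are pairwise disjoint gives $\|\varphi_n-\varphi_m\|_{L^p}^p=2$. This justifies more explicitly the paper's terse claim that no subsequence converges strongly; in the paper's version one must note that a strong limit would vanish on every ball while having unit norm.

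\emph{$\mathcal{E}_p(\Omega)=+\infty$ implies compactness.} Here the paper gives no argument and simply invokes \cite[Lemma A.1]{BraBriPri}. You instead give a self-contained proof:
\begin{enumerate}
\item a tail bound uniform in $n$, obtained by applying the Poincar\'e inequality on $\Omega\setminus\overline{B_R}$ to $\eta_R\,u_n$;
\item local Rellich--Kondrachov compactness on bounded sets.
\end{enumerate}
This is the same tightness-plus-local-compactness mechanism the paper uses for the weighted space in Proposition \ref{prop:spazionostro}, with $\lambda_{1,p}(\Omega\setminus\overline{B_R})$ playing the role of $\alpha(R)$. Your version makes the lemma independent of the external reference, at the cost of a few lines.

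Two small points to tidy:
\begin{itemize}
\item When $\Omega$ is unbounded, the zero extension of $u_n$ restricted to a ball $B\supseteq B_{2R}$ lies in $W^{1,p}(B)$ but generally not in $W^{1,p}_0(B)$. So either invoke Rellich--Kondrachov for $W^{1,p}(B)$ on the Lipschitz ball, or first multiply by a cut-off supported in $B$.
\item Lemma \ref{lm:pinpon}, announced at the start, is never actually used, so drop the mention.
\end{itemize}
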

\begin{proof}
The implication $\Longleftarrow$ is a direct consequence of \cite[Lemma A.1]{BraBriPri}, for example. For the converse implication, we prove that if $\mathcal{E}_p(\Omega)<+\infty$, then the embedding fails to be compact. 
By assumption, we have 
\[
\lambda_{1,p}(\Omega\setminus\overline{B_R})\le \mathcal{E}_p(\Omega)<+\infty,\qquad \text{for every}\ R>0.
\] 
For every $n\in\mathbb{N}\setminus\{0\}$, we take $\varphi_n\in C^\infty_0(\Omega\setminus \overline{B_n})\subseteq C^\infty_0(\Omega)\subseteq W^{1,p}_0(\Omega)$ such that
\[
\int_{\Omega\setminus \overline{B_n}} |\varphi_n|^p\,dx=1\qquad \mbox{ and }\qquad \lambda_{1,p}(\Omega\setminus\overline{B_n})\le \int_{\Omega\setminus\overline{B_n}} |\nabla \varphi_n|^p\,dx\le \lambda_{1,p}(\Omega\setminus\overline{B_n})+\frac{1}{n}.
\]
Such a function exists by definition of $\lambda_{1,p}(\Omega\setminus\overline{B_n})$. Thus, we have constructed a bounded sequence in $W^{1,p}_0(\Omega)$, with unit $L^p(\Omega)$ norm, which can not admit subsequences strongly converging in $L^p(\Omega)$. This gives that the embedding $W^{1,p}_0(\Omega)\hookrightarrow \mathcal{E}_p(\Omega)$ fails to be compact.
\end{proof}
At last, we present a simple result asserting that an open set supports the $L^p$ Poincar\'e inequality if and only if the Poincar\'e constant ``at infinity'' is positive. In light of the previous result, it is not restrictive to consider the case $\mathcal{E}_p<+\infty$ only.
\begin{lm}
\label{lm:minchione}
Let $1<p<\infty$ and let $\Omega\subseteq\mathbb{R}^N$ be an open set such that $\mathcal{E}_p(\Omega)<+\infty$. Then we have 
\[
\lambda_{1,p}(\Omega)>0\qquad \Longleftrightarrow\qquad \mathcal{E}_p(\Omega)>0.
\]
\end{lm}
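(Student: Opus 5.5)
The implication $\Longrightarrow$ follows from domain monotonicity of $\lambda_{1,p}$. Since $C^\infty_0(\Omega\setminus\overline{B_R})\subseteq C^\infty_0(\Omega)$ for every $R>0$, we have
\[
\lambda_{1,p}(\Omega)\le \lambda_{1,p}(\Omega\setminus\overline{B_R})\le \mathcal{E}_p(\Omega).
\]
Hence $\lambda_{1,p}(\Omega)>0$ forces $\mathcal{E}_p(\Omega)>0$. The assumption $\mathcal{E}_p(\Omega)<+\infty$ is not needed in this direction.

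For the converse, assume $\mathcal{E}_p(\Omega)>0$. By definition of the supremum we pick $R>0$ with $\lambda_{1,p}(\Omega\setminus\overline{B_R})=:\beta>0$. The plan is to glue a Poincar\'e inequality near infinity with Lemma \ref{lm:pinpon} on a bounded region. Take a cut-off $\eta\in C^\infty(\mathbb{R}^N)$ with $0\le\eta\le1$, $\eta\equiv0$ on $B_{R+1/2}$ (so $\eta\equiv 0$ near $\overline{B_R}$), $\eta\equiv1$ outside $B_{R+1}$, and $|\nabla\eta|\le 4$. For $\varphi\in C^\infty_0(\Omega)$, the function $\eta\varphi$ belongs to $C^\infty_0(\Omega\setminus\overline{B_R})$, so
\[
\int_{\Omega\setminus B_{R+1}}|\varphi|^p\,dx\le \int_\Omega |\eta\varphi|^p\,dx\le \frac{1}{\beta}\int_\Omega|\nabla(\eta\varphi)|^p\,dx\le \frac{2^{p-1}}{\beta}\left(\int_\Omega|\nabla\varphi|^p\,dx+4^p\int_{B_{R+1}}|\varphi|^p\,dx\right).
\]
The remaining task is to bound $\int_{B_{R+1}}|\varphi|^p$, extending $\varphi$ by zero to $\mathbb{R}^N$, and we do it exactly as in the proof of Lemma \ref{lm:poincare}. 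If $1<p<N$, Lemma \ref{lm:pinpon} with $E=B_{R+1}$ gives
\[
\int_{B_{R+1}}|\varphi|^p\,dx\le C\,|B_{R+1}|^{p/N}\int|\nabla\varphi|^p\,dx
\]
directly, and we are done. If $p\ge N$, Lemma \ref{lm:pinpon} instead gives
\[
\int_{B_{R+1}}|\varphi|^p\,dx\le C_\varepsilon\int|\nabla\varphi|^p\,dx+C\,\varepsilon\int|\varphi|^p\,dx.
\]

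We then add the two estimates. Writing $\int_\Omega|\varphi|^p=\int_{B_{R+1}}|\varphi|^p+\int_{\Omega\setminus B_{R+1}}|\varphi|^p$, we obtain
\[
\int_\Omega|\varphi|^p\,dx\le A\int_\Omega|\nabla\varphi|^p\,dx+\Big(1+\tfrac{2^{p-1}4^p}{\beta}\Big)\int_{B_{R+1}}|\varphi|^p\,dx.
\]
Inserting the bound for the ball with $\varepsilon$ chosen so that $\big(1+2^{p-1}4^p/\beta\big)\,C\,\varepsilon\le 1/2$, the $\varepsilon$-term is absorbed into the left-hand side. This leaves $\int_\Omega|\varphi|^p\le A'\int_\Omega|\nabla\varphi|^p$, that is, $\lambda_{1,p}(\Omega)\ge 1/A'>0$.

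The only delicate step is the case $p\ge N$, where Lemma \ref{lm:pinpon} produces a residual $L^p$ term. The absorption must be done carefully: the $\varepsilon$ has to beat the constant multiplying the ball integral after the cut-off step, and that constant depends on $\beta$ and on $\|\nabla\eta\|_\infty$. Everything else is routine.
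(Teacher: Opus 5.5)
Your proof is correct and follows essentially the same route as the paper: domain monotonicity for $\Longrightarrow$, and for $\Longleftarrow$ a cut-off near infinity combined with the Poincar\'e inequality on $\Omega\setminus\overline{B_R}$, with the leftover $L^p$ mass on a bounded ball controlled by Lemma \ref{lm:pinpon} and absorbed into the left-hand side when $p\ge N$. The only difference is cosmetic: the paper also splits $\varphi=\eta\,\varphi+(1-\eta)\,\varphi$ and applies the Poincar\'e inequality of $B_{2R_\Omega}$ to the inner piece. You rightly skip that step, since it only regenerates an $L^p$ term on the annulus that still has to be handled by Lemma \ref{lm:pinpon}.
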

\begin{proof}
The implication $\Longrightarrow$ follows from the fact that $\lambda_{1,p}(\Omega)\le \mathcal{E}_p(\Omega)$, which in turn is a consequence of the monotonicity of $\lambda_{1,p}$ with respect to the set inclusion.
\par 
In order to prove the converse implication, let us assume that $\mathcal{E}_p(\Omega)>0$ and show that $\Omega$ supports the $L^p$ Poincar\'e inequality. 
By definition and by monotonicity, this entails that there exists $R_\Omega>0$ such that 
\[
\lambda_{1,p}(\Omega\setminus \overline{B_R})\ge \frac{\mathcal{E}_p(\Omega)}{2},\qquad \mbox{ for every } R\ge R_\Omega.
\]
Let us take $\eta\in C^\infty_0(B_{2R_\Omega})$ such that
\[
0\le \eta\le 1,\qquad \eta\equiv 1 \mbox{ on } B_{R_\Omega},\qquad \|\nabla \eta\|_{L^\infty}\le \frac{C}{R_\Omega},
\]
for some dimensional constant $C>0$, 
and set $\zeta=1-\eta\in C^\infty(\mathbb{R}^N)$. For every $\varphi\in C^\infty_0(\Omega)$, we observe that 
\[
\eta\,\varphi\in W^{1,p}_0(\Omega\cap B_{2R_\Omega})\qquad \mbox{ and }\qquad \zeta\,\varphi\in W^{1,p}_0(\Omega\setminus\overline{B_{R_\Omega}}).
\] 
We then have
\[
\begin{split}
\int_{\Omega\cap B_{2R_\Omega}} |\varphi\,\eta|^p\,dx&\le \frac{1}{\lambda_{1,p}(\Omega\cap B_{2R_\Omega})}\,\int_{\Omega \cap B_{2R_\Omega}} |\nabla (\varphi\,\eta)|^p\,dx\\
&\le \frac{2^{p-1}}{\lambda_{1,p}(B_{2R_\Omega})}\,\int_{\Omega \cap B_{2R_\Omega}} |\nabla \varphi|^p\,dx+\frac{2^{p-1}\,C^p}{R_\Omega^p\,\lambda_{1,p}(B_{2R_\Omega})}\,\int_{B_{2R_\Omega}\setminus B_{R_\Omega}} |\varphi|^p\,dx.
\end{split}
\]
On the other hand, we also have 
\[
\begin{split}
\int_{\Omega\setminus B_{R_\Omega}} |\varphi\,\zeta|^p\,dx&\le \frac{1}{\lambda_{1,p}(\Omega\setminus\overline{B_{R_\Omega}})}\,\int_{\Omega \setminus B_{R_\Omega}} |\nabla (\varphi\,\zeta)|^p\,dx\\
&\le \frac{2^p}{\mathcal{E}_{p}(\Omega)}\,\int_{\Omega \setminus B_{R_\Omega}} |\nabla \varphi|^p\,dx+\frac{2^p\,C^p}{R_\Omega^p\,\mathcal{E}_{p}(\Omega)}\,\int_{B_{2R_\Omega}\setminus B_{R_\Omega}} |\varphi|^p\,dx.
\end{split}
\]
By summing the two estimates and using that
\[
\eta^p+\zeta^p\ge 2^{1-p}\,(\eta+\zeta)^p=2^{1-p}\qquad\text{and}\qquad \lambda_{1,p}(B_{2R_\Omega})=(2\,R_\Omega)^{-p}\,\lambda_{1,p}(B_1),
\] 
we get, after some standard manipulations
\begin{equation}
\label{intermesio}
\int_\Omega |\varphi|^p\,dx\le  C_{N,p}\,\left(R_\Omega^p+\frac{1}{\mathcal{E}_{p}(\Omega)}\right)\,\left[\int_\Omega |\nabla \varphi|^p\,dx+\frac{1}{R_\Omega^p}\,\int_{B_{2R_\Omega}} |\varphi|^p\,dx\right],
\end{equation}
for a suitable constant $C_{N,p}>0$.
\par
In order to handle the last term, we can appeal to Lemma \ref{lm:pinpon}. 
Indeed, in the case $1<p<N$, we directly get
\[
\frac{1}{R_\Omega^p}\,\int_{B_{2R_\Omega}} |\varphi|^p\,dx\le C\,\int_{\Omega} |\nabla \varphi|^{p}\,dx,
\]
which gives the claimed estimate.
Instead, for the case $p=N$, we get 
\[
\frac{1}{R_\Omega^N}\,\int_{B_{2R_\Omega}} |\varphi|^N\,dx\le \frac{C}{\varepsilon}\,\int_{\Omega}|\nabla\varphi|^N\,dx+\frac{C}{R_\Omega^N}\,\varepsilon\,\int_{\Omega} |\varphi|^N\,dx,
\]
which is valid for every $\varepsilon>0$, up to modify the constant $C$.
In conjunction with \eqref{intermesio}, this gives
\[
\begin{split}
\int_\Omega |\varphi|^N\,dx&\le  C_{N}\,\left(R_\Omega^N+\frac{1}{\mathcal{E}_{N}(\Omega)}\right)\,\left(1+\frac{1}{\varepsilon}\right)\,\int_\Omega |\nabla \varphi|^N\,dx\\
&+C_{N}\,\left(R_\Omega^N+\frac{1}{\mathcal{E}_{N}(\Omega)}\right)\,\frac{\varepsilon}{R_\Omega^N}\,\int_{\Omega} |\varphi|^N\,dx.
\end{split}
\]
By choosing 
\[
\varepsilon=\frac{R_\Omega^N}{2}\,\frac{1}{C_N}\,\frac{\mathcal{E}_N(\Omega)}{R_\Omega^N\,\mathcal{E}_N(\Omega)+1},
\]
we can absorb the $L^N$ norm on the right-hand side and prove the desired Poincar\'e inequality. Finally, for the case $p>N$ we can proceed similarly: we leave the details to the reader.
\end{proof}

\subsection{A weighted Sobolev space}
We now need a definition, in order to precise a specific class of potentials we wish to work with.
\begin{defi}
\label{defi:scp}
Let $V\in L^1_{\rm loc}(\mathbb{R}^N)$ be a non-negative function. We say that $V$ is a {\it strongly confining potential} if it has the following property: there exists an increasing function $\alpha:(0,+\infty)\to(0,+\infty)$ such that
\[
\lim_{R\to +\infty} \alpha(R)=+\infty,
\]
and for every $R>0$ we have
\[
V(x)\ge \alpha(R),\qquad \text{for a.e.}\ x\in \mathbb{R}^N\setminus B_R.
\]
\end{defi}
\begin{prop}
\label{prop:spazionostro}
Let $1<p<\infty$ and let $\Omega\subseteq\mathbb{R}^N$ be an open set. For a strongly confining potential $V\ge 0$, we define $\mathscr{D}^{1,p}_0(\Omega;V)$ to be the completion of $C^\infty_0(\Omega)$, with respect to the norm
\[
\|\varphi\|_{\mathscr{D}^{1,p}_0(\Omega;V)}:=\left(\int_\Omega |\nabla \varphi|^p\,dx+\int_\Omega V\,|\varphi|^p\,dx\right)^\frac{1}{p},\qquad \text{for every}\ \varphi\in C^\infty_0(\Omega).
\]
Then, this is a Banach space compactly embedded into $L^p(\Omega)$. Moreover, it can be identified with $W^{1,p}_0(\Omega;V)$, i.e. the closure of $C^\infty_0(\Omega)$ in the weigthed Sobolev space 
\[
W^{1,p}(\Omega;V)=\left\{\varphi\in W^{1,p}(\Omega)\, :\, \int_\Omega V\,|\varphi|^p\,dx<+\infty\right\},
\]
endowed with the norm
\[
\|\varphi\|_{W^{1,p}(\Omega;V)}:=\left(\int_\Omega |\nabla \varphi|^p\,dx+\int_\Omega (1+V)\,|\varphi|^p\,dx\right)^\frac{1}{p}.
\]
\end{prop}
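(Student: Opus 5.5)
The plan is to get three things in turn: the norm is equivalent to the full weighted norm, so completion and closure agree; completeness; and compactness from a tightness argument.

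\textbf{Equivalence of norms.} Since $V$ is strongly confining, it satisfies \eqref{Vlouer} with, say, $R=1$ and $\alpha=\alpha(1)>0$. For $\varphi\in C^\infty_0(\Omega)$, extend by zero to get an element of $C^\infty_0(\mathbb{R}^N)$. Lemma \ref{lm:poincare} then gives
\[
\int_\Omega|\varphi|^p\,dx\le C_{N,p,1,\alpha(1)}\left(\int_\Omega|\nabla\varphi|^p\,dx+\int_\Omega V\,|\varphi|^p\,dx\right).
\]
Hence $\|\cdot\|_{\mathscr{D}^{1,p}_0(\Omega;V)}$ and $\|\cdot\|_{W^{1,p}(\Omega;V)}$ are equivalent on $C^\infty_0(\Omega)$.

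\textbf{Identification and completeness.} A Cauchy sequence $\{\varphi_n\}\subseteq C^\infty_0(\Omega)$ for $\|\cdot\|_{\mathscr{D}^{1,p}_0}$ is then Cauchy in $W^{1,p}(\Omega)$ and in $L^p(\Omega;V\,dx)$. So it converges to some $\varphi\in W^{1,p}_0(\Omega)$. A subsequence converges a.e., and by Fatou's lemma $\int_\Omega V|\varphi_n-\varphi|^p\,dx\to0$. Therefore the abstract completion maps injectively and isometrically (up to equivalence of norms) onto the closure $W^{1,p}_0(\Omega;V)$ of $C^\infty_0(\Omega)$ in $W^{1,p}(\Omega;V)$. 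Injectivity holds because the limit $\varphi$ determines the limiting norm, which is given by the integrals of $\varphi$. As a closed subspace of $W^{1,p}(\Omega;V)$, which is complete by the same Fatou argument, this space is a Banach space.

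\textbf{Compact embedding.} Take a bounded sequence, $\|\varphi_n\|_{\mathscr{D}^{1,p}_0}\le M$, and use the identification to view it in $W^{1,p}_0(\Omega)$. Extending by zero gives a bounded sequence in $W^{1,p}(\mathbb{R}^N)$.
\begin{itemize}
\item \emph{Local compactness.} By Rellich--Kondrachov on each ball $B_R$, together with a diagonal argument, we extract a subsequence converging in $L^p_{\rm loc}(\mathbb{R}^N)$ to some $\varphi$.
\item \emph{Uniform tails.} The confinement gives
\[
\int_{\Omega\setminus B_R}|\varphi_n|^p\,dx\le\frac{1}{\alpha(R)}\int_\Omega V\,|\varphi_n|^p\,dx\le\frac{M^p}{\alpha(R)},
\]
and the right-hand side tends to $0$ as $R\to\infty$, uniformly in $n$. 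By Fatou's lemma the same bound holds for $\varphi$.
\item \emph{Conclusion.} Splitting $\|\varphi_n-\varphi\|_{L^p}$ into the part on $B_R$ and the part outside, the inside part tends to $0$ by the local convergence and the outside part is at most $2M/\alpha(R)^{1/p}$. Letting $n\to\infty$ and then $R\to\infty$ gives strong convergence in $L^p(\Omega)$.
\end{itemize}

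\textbf{Main obstacle.} The only real difficulty is the identification step: making sure the completion genuinely consists of functions, so that no "phantom" elements with zero function but nonzero gradient appear. This is exactly what the Poincaré-type Lemma \ref{lm:poincare} rules out, since it controls the $L^p$ norm by the $\mathscr{D}^{1,p}_0$ norm; it requires care but is not difficult.
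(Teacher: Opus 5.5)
Your proposal is correct and follows the same route as the paper. You establish norm equivalence on $C^\infty_0(\Omega)$ via Lemma \ref{lm:poincare}, and then combine the tail bound $\int_{\Omega\setminus B_R}|\varphi|^p\,dx\le \alpha(R)^{-1}\|\varphi\|^p_{\mathscr{D}^{1,p}_0(\Omega;V)}$ with local compactness; your Rellich-on-balls-plus-tails argument is just the Riesz--Fr\'echet--Kolmogorov criterion the paper invokes, written out. Your Fatou argument fills in the identification that the paper only asserts (``identified in a canonical way''), while the paper's extra remark that the $L^p$-limit stays in $W^{1,p}_0(\Omega;V)$ is not needed for compactness of the embedding.
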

\begin{proof}
By Lemma \ref{lm:poincare}, the two norms
\[
\|\varphi\|_{\mathscr{D}^{1,p}_0(\Omega;V)}\qquad \text{and}\qquad \|\varphi\|_{W^{1,p}(\Omega;V)},
\]
are equivalent on $C^\infty_0(\Omega)$. Thus, the two spaces $\mathscr{D}^{1,p}_0(\Omega;V)$ and $W^{1,p}_0(\Omega;V)$ can be identified in a canonical way.
Moreover, we note that for every $R>0$ it holds
\[
 \int_{\Omega\setminus B_{R}} |\varphi|^{p}\,dx\le  \frac{1}{\alpha(R)}\,\int_{\Omega\setminus B_{R}}V\,|\varphi|^{p}\,dx\le \frac{1}{\alpha(R)}\,\|\varphi\|_{\mathscr{D}^{1,p}_0(\Omega;V)}^p,\qquad \text{for every}\ \varphi \in W^{1,p}_0(\Omega;V).
\]
Thanks to the Riesz-Fr\'echet-Kolmogorov Theorem,  the previous  fact ensures that every bounded sequence $\{\varphi_n\}_{n\in\mathbb{N}}\subseteq W^{1,p}_0(\Omega;V) $ admits a subsequence strongly converging in $L^p(\Omega)$ to a function $\varphi$. Moreover, thanks to a standard argument (see for example the proof of \cite[Theorem 3.7.4]{BraBook}), we have that the space $W^{1,p}_0(\Omega;V) $ is also weakly closed and thus the limit function $\varphi$ still belongs to $W^{1,p}_0(\Omega;V)$. 
\end{proof}
The next simple technical result will be needed somewhere in the sequel.
\begin{lm}
\label{lm:staredentro}
Let $1<p<\infty$ and let $\Omega\subseteq\mathbb{R}^N$ be an open set. We take a strongly confining potential $V\ge 0$, then for every $\zeta\in C^\infty_0(\Omega)$ and $\varphi\in W^{1,p}_0(\Omega)\cap L^\infty(\Omega)$, we have
\[
\zeta\,\varphi\in W^{1,p}_0(\Omega;V).
\]
\end{lm}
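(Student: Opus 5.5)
The plan is to approximate $\zeta\,\varphi$ by smooth compactly supported functions in the norm of $W^{1,p}(\Omega;V)$, using the fact that everything is localized in the compact set $K:={\rm supp}\,\zeta\subset\Omega$. Then $V$ only plays a role through $V\in L^1(K)$, and $|\varphi|^p$ is bounded there. We fix an open set $U$ with $K\subset U\Subset\Omega$.

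First we check that $\zeta\,\varphi\in W^{1,p}(\Omega;V)$. Since $\varphi\in W^{1,p}_0(\Omega)\cap L^\infty(\Omega)$, the product $\zeta\varphi$ lies in $W^{1,p}(\Omega)$ and vanishes outside $K$. Moreover
\[
\int_\Omega V\,|\zeta\,\varphi|^p\,dx\le \|\zeta\|_{L^\infty}^p\,\|\varphi\|_{L^\infty}^p\int_K V\,dx<+\infty,
\]
because $V\in L^1_{\rm loc}$.

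Next we build the approximants. Let $\rho_\varepsilon$ be a standard mollifier and set $\psi_\varepsilon=\rho_\varepsilon*(\zeta\varphi)$, where $\zeta\varphi$ is extended by zero. For $\varepsilon$ smaller than ${\rm dist}(K,\partial U)$, we have $\psi_\varepsilon\in C^\infty_0(U)\subseteq C^\infty_0(\Omega)$. By standard mollification properties, $\psi_\varepsilon\to\zeta\varphi$ in $W^{1,p}(\mathbb{R}^N)$, so the gradient part and the unweighted $L^p$ part of the norm converge.

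The only point needing care is the weighted term
\[
\int_\Omega V\,|\psi_\varepsilon-\zeta\varphi|^p\,dx .
\]
For this we use two facts. First, $\|\psi_\varepsilon\|_{L^\infty}\le \|\zeta\varphi\|_{L^\infty}=:M$, so $|\psi_\varepsilon-\zeta\varphi|^p\le (2M)^p$, and all these functions are supported in $\overline U$. Second, along a sequence $\varepsilon_n\to0$ we have $\psi_{\varepsilon_n}\to\zeta\varphi$ almost everywhere, which follows from $L^p$ convergence after passing to a subsequence. The integrand is therefore dominated by $(2M)^p\,V\,1_{\overline U}\in L^1$ and tends to $0$ almost everywhere. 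By dominated convergence the weighted term goes to zero.

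Hence $\psi_{\varepsilon_n}\to\zeta\varphi$ in $W^{1,p}(\Omega;V)$, and by definition $\zeta\varphi$ belongs to the closure $W^{1,p}_0(\Omega;V)$. By Proposition \ref{prop:spazionostro}, this closure is identified with $\mathscr{D}^{1,p}_0(\Omega;V)$.

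The main obstacle is precisely this weighted term. Because $V$ is only $L^1_{\rm loc}$, there is no continuity of $u\mapsto\int V|u|^p$ in $W^{1,p}$ alone. This is why we need the $L^\infty$ bound on $\varphi$, preserved under mollification, together with the compact support supplied by $\zeta$; these two ingredients are what make dominated convergence applicable.
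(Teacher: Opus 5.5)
Your proof is correct. The mechanism is the same as the paper's: the gradient and unweighted $L^p$ parts converge by standard facts, and the weighted term is handled by dominated convergence, using an $L^\infty$ bound, a.e.\ convergence, and the fact that everything lives on a fixed bounded set where $V$ is integrable. What differs is how the approximating sequence is built.

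The paper takes $\varphi_n\in C^\infty_0(\Omega)$ converging to $\varphi$ in $W^{1,p}(\Omega)$ and uses $\zeta\,\varphi_n$. It assumes ``without loss of generality'' that $\sup_n\|\varphi_n\|_{L^\infty(\Omega)}\le 2\,\|\varphi\|_{L^\infty(\Omega)}$ and $\varphi_n\to\varphi$ a.e. That uniform bound is not automatic for an arbitrary approximating sequence. It implicitly needs a truncation step, e.g.\ composing with a smooth bounded function equal to the identity on $[-\|\varphi\|_{L^\infty},\|\varphi\|_{L^\infty}]$ and checking that this preserves $W^{1,p}$ convergence.

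You instead localize first and mollify $\zeta\,\varphi$ itself. The bound $\|\rho_\varepsilon*(\zeta\varphi)\|_{L^\infty}\le\|\zeta\varphi\|_{L^\infty}$ and the uniform compact support in $U\Subset\Omega$ then come for free. The only price is the standard facts that the zero extension of $\zeta\varphi$ belongs to $W^{1,p}(\mathbb{R}^N)$ and that mollifications converge in $W^{1,p}(\mathbb{R}^N)$.

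Your route also never uses the zero boundary values of $\varphi$. It proves the statement for any $\varphi\in W^{1,p}_{\rm loc}(\Omega)\cap L^\infty_{\rm loc}(\Omega)$, so the hypothesis $\varphi\in W^{1,p}_0(\Omega)$ is only needed in the paper's version of the argument.
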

\begin{proof}
By definition of $W^{1,p}_0(\Omega)$, there exists a sequence $\{\varphi_n\}_{n\in\mathbb{N}}\subseteq C^{\infty}_0(\Omega)$ that converges to $\varphi$ in $W^{1,p}(\Omega)$. Without loss of generality, we can further suppose that 
\begin{equation}\label{TCD}
\sup_{n\in\mathbb{N}}\|\varphi_n\|_{L^{\infty}(\Omega)}\le 2\, \|\varphi\|_{L^{\infty}(\Omega)} \qquad \text{and}\qquad  \lim_{n\to\infty}\varphi_n(x)=\varphi(x),\ \text{for a.\,e.}\ x\in\Omega. 
\end{equation}
Then, it is not difficult to verify that 
\[
\lim_{n\to\infty}\|\zeta\,\varphi_{n}-\zeta\,\varphi\|_{W^{1,p}(\Omega; V)}= 0.
\]
In particular, the fact that
\[
\lim_{n\to\infty}\int_{\Omega}V\,|\varphi_n-\varphi|^p\,|\zeta|^p\, dx=0
\]
follows from \eqref{TCD} and Lebesgue Dominated Convergence Theorem. Indeed, we have 
\[
V\,|\varphi_n-\varphi|^p\,|\zeta|^p\le 3^p\,\|\varphi\|^p_{L^\infty(\Omega)}\,V\,\zeta^p\in L^1(\Omega),
\]
thanks to the fact that $V\in L^1_{\rm loc}(\Omega)$ and $\zeta\in C^\infty_0(\Omega)$.
Since $\{\zeta\,\varphi_n\}_{n\in\mathbb{N}}\subseteq C^{\infty}_0(\Omega)$, the statement is proved.
\end{proof}

\section{Exponential fall-off of subsolutions}
\label{sec:3}

The \v{S}nol-Simon--type decay estimate of the Main Theorem will be a consequence of the following more general result, which holds for non-negative subsolutions.
\begin{prop}[Exponential decay at infinity] 
\label{prop:lambdadecay}
Let $1<p<\infty$ and let $\Omega\subseteq\mathbb{R}^N$ be an open set such that $\lambda_{1,p}(\Omega)>0$. For a constant $\lambda$ such that 
\[
\lambda_{1,p}(\Omega)\le \lambda<\mathcal E_p(\Omega),
\] 
we consider $v\in W^{1,p}_0(\Omega)$ a nonnegative subsolution of the equation
\begin{equation}\label{subsol}
-\Delta_p u = \lambda\, u^{p-1},\qquad \text{in}\ \Omega.
\end{equation}
Then:
\begin{itemize}
\item if $\mathcal{E}_p(\Omega)<+\infty$,  there exist two constants $\mathcal{C}=\mathcal{C}(N, p, \lambda, \Omega)>0$ and $\alpha=\alpha(p, \lambda, \Omega)>0$ such that  $v$
satisfies the following decay estimate 
\begin{equation}
\label{decayinfty}
0\le v(x)\le \mathcal{C}\,\|v\|_{L^p(\Omega)}\,e^{-\alpha\,|x|},\qquad \text{for a.\,e.}\ x\in\Omega.
\end{equation}
Moreover, the constant $\alpha$ decreasingly depends on $\lambda$ and we have 
\[
\lim_{\lambda\nearrow \mathcal{E}_p(\Omega)}\alpha=0;
\]
\item if $\mathcal{E}_p(\Omega)=+\infty$, for every $\alpha>0$ there exists a constant $\mathcal{C}=\mathcal{C}(N, p, \lambda, \Omega,\alpha)>0$ such that \eqref{decayinfty} holds.
\end{itemize}
\end{prop}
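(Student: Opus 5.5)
The proof has two stages: first an exponential decay of $\|v\|_{L^p(\Omega\setminus B_R)}$ in $R$, then a local $L^\infty$--$L^p$ bound that turns this into pointwise decay. Throughout, take $\lambda<\mu<\mathcal{E}_p(\Omega)$; when $\mathcal{E}_p(\Omega)=+\infty$, $\mu$ may be taken as large as we like. By definition of $\mathcal{E}_p$ and monotonicity there is $R_0$ with $\lambda_{1,p}(\Omega\setminus\overline{B_{R}})\ge \mu$ for all $R\ge R_0$.

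\emph{Caccioppoli estimate at infinity.} For $R\ge R_0$ let $\eta\in C^\infty(\mathbb{R}^N)$ satisfy
\[
\eta\equiv 0\ \text{on}\ B_R,\qquad \eta\equiv1\ \text{outside}\ B_{R+1},\qquad |\nabla\eta|\le 2 .
\]
Test the subsolution inequality with $\eta^p v\ge0$. This is admissible after the usual truncation and approximation, using that $\eta$ is bounded and Lipschitz. We obtain
\[
\int |\nabla v|^p\eta^p\le \lambda\int v^p\eta^p+p\int |\nabla v|^{p-1}\eta^{p-1} v|\nabla\eta| .
\]
Since $\eta v\in W^{1,p}_0(\Omega\setminus\overline{B_R})$, Poincar\'e gives
\[
\mu\int (\eta v)^p\le \int|\nabla(\eta v)|^p\le (1+\delta)\int \eta^p|\nabla v|^p+C_\delta\int v^p|\nabla\eta|^p .
\]
Combining the two, and absorbing the cross term by Young's inequality with a small parameter, yields, for $\delta$ small depending on $\mu-\lambda$,
\[
(\mu-\lambda-\varepsilon)\int_{\Omega\setminus B_{R+1}} v^p\le C\int_{\Omega\cap(B_{R+1}\setminus B_R)} v^p .
\]
This is the main technical step. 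The cross term must be absorbed \emph{without} losing the gap $\mu-\lambda$; the point is that the bad terms only involve $\nabla\eta$, which is supported in the annulus. Setting $a(R)=\int_{\Omega\setminus B_R}v^p$, we get
\[
a(R+1)\le \frac{C}{c_0}\,\big(a(R)-a(R+1)\big),\qquad\text{i.e.}\qquad a(R+1)\le\theta\, a(R),\quad \theta=\frac{C}{C+c_0}<1 .
\]
Iterating gives $a(R)\le \theta^{R-R_0-1}\|v\|_p^p$, so $a(R)\le C\|v\|_{L^p}^p e^{-p\alpha R}$ with $\alpha=-\log\theta/p$. Here $c_0\approx \mu-\lambda$ (up to $\delta$), so $\alpha\to0$ as $\lambda\nearrow\mathcal{E}_p(\Omega)$. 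Choosing $\mu$ monotonically in $\lambda$ makes $\alpha$ decrease in $\lambda$.

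\emph{The case $\mathcal{E}_p(\Omega)=+\infty$.} We may take $\mu$ arbitrarily large while $C$ stays bounded; more carefully, the constant in front of the annulus term is $\lambda+C_\delta\|\nabla\eta\|_\infty^p+\dots$, which is independent of $\mu$. Hence $\theta\to0$ as $\mu\to\infty$, and any prescribed $\alpha$ is attained at the cost of a larger $R_0$, hence a larger constant.

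\emph{From $L^p$ to pointwise decay.} Extend $v$ by zero outside $\Omega$. Since $v\ge0$ and $v\in W^{1,p}_0(\Omega)$ is a subsolution in $\Omega$, the extension is a subsolution of $-\Delta_p w\le\lambda w^{p-1}$ in all of $\mathbb{R}^N$. Moser iteration (or De Giorgi) for such subsolutions on unit balls gives
\[
\sup_{B_{1/2}(x_0)}v\le C(N,p,\lambda)\,\|v\|_{L^p(B_1(x_0))}.
\]
For $|x_0|\ge R_0+1$ we have $B_1(x_0)\subseteq \mathbb{R}^N\setminus B_{|x_0|-1}$, so the right-hand side is at most $Ce^{-\alpha(|x_0|-1)}\|v\|_p$. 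For $|x_0|\le R_0+1$ the same local estimate gives $v\le C\|v\|_p$, and $e^{-\alpha|x|}$ is bounded below there. Adjusting $\mathcal{C}$ then yields \eqref{decayinfty}.
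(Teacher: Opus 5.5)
Your proposal is correct and is essentially the paper's own argument. You test the subsolution inequality with $\eta^p v$ for a cut-off at infinity, then use the Poincar\'e inequality on $\Omega\setminus\overline{B_R}$ at a level strictly between $\lambda$ and $\mathcal{E}_p(\Omega)$. The paper uses $(1-\varepsilon)\mathcal{E}_p(\Omega)$ with $\varepsilon$ tuned so that the gap is $(\mathcal{E}_p(\Omega)-\lambda)/2$, and uses $\vartheta\lambda$ with $\vartheta$ large when $\mathcal{E}_p(\Omega)=+\infty$. In both versions this leads to the iteration $a(R+1)\le\theta\,a(R)$.

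The only difference is the last step. You derive the $L^\infty$--$L^p$ estimate localized at infinity yourself, via the zero extension of $v$ and local Moser bounds on unit balls. The paper instead quotes from earlier work a global $L^\infty$ bound together with an estimate of the form $\|v\|_{L^\infty(\Omega\setminus B_{R+1})}\le C\,(1+\lambda)^{N/p^2}\|v\|_{L^p(\Omega\setminus B_R)}$.
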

\begin{proof}
We adapt to the case of the $p-$Laplacian the proof of \cite[Theorem 5.1]{BiaBraOgn}. The proof will be slightly more elaborated, since we do not take any assumption on the ``geometry at infinity'' of the set $\Omega$, differently from the framework of \cite{BiaBraOgn}.
\par
We divide the proof in various parts, for ease of readability. 
\vskip.2cm\noindent
{\it Reduction to the $L^p$ norm.} We first claim that it is sufficient to prove that the desired exponential decay in $L^p$ norm, i.e. we just need to prove that:
\begin{itemize}
\item if $\mathcal{E}_p(\Omega)<+\infty$, there exist 
 $\alpha=\alpha(p,\lambda, \Omega)>0$ and a constant $C_1=C_1(p,\lambda,\Omega)>0$,  such that
\begin{equation}
\label{decayp}
\|v\|_{L^p(\Omega\setminus B_R)}\le C_1\, \|v\|_{L^p(\Omega)}\, e^{-\alpha\,R}, \qquad \text{for every } R>0;
\end{equation}
\item if $\mathcal{E}_p(\Omega)=+\infty$, for every $\alpha>0$ there exists a  constant $C_1=C_1(p,\lambda,\Omega,\alpha)>0$ such that \eqref{decayp} holds.
\end{itemize}
Indeed, assume that \eqref{decayp} holds. By using \cite[Lemma 2.3]{BraBriPri0} with $q=p$,  we have that $v\in L^\infty(\Omega)$ with \[
\|v\|_{L^\infty(\Omega)}\le C_2\,\lambda^\frac{N}{p^2}\,\|v\|_{L^p(\Omega)},
\]
for some $C_2=C_2(N,p)>0$. This in turn implies that 
\begin{equation}
\label{limitan}
\begin{split}
\|v\|_{L^\infty(B_2)}\le \Big(C_2\,e^{2\,\alpha}\,\lambda^\frac{N}{p^2}\Big)\,e^{-\alpha\, |x|}\,\|v\|_{L^p(\Omega)},\qquad \text{for}\ x\in B_2.
\end{split}
\end{equation}
Moreover, on account of \cite[Lemma 7.1]{BraBriPri0} with $q=p$,  we have that
\begin{equation}
\label{stimainfinito}
\|v\|_{L^\infty(\Omega\setminus B_{R+1})}\le C_3\,\left(1+\lambda \right)^\frac{N}{p^2}\,\|v\|_{L^p(\Omega\setminus B_{R})},\qquad \text{for every}\ R>0,
\end{equation}
for a constant $C_3=C_3(N,p)>0$.
 Then, from  \eqref{stimainfinito} and \eqref{decayp} with $R=n\in\mathbb{N}\setminus\{0\}$, we would  get  for almost every $x\in B_{n+2}\setminus B_{n+1}$
\[
\begin{split}
0\le v(x)\le \|v\|_{L^\infty(\Omega\setminus B_{n+1})}&\le C_3\,\left(1+\lambda\right)^\frac{N}{p^2}\,\|v\|_{L^p(\Omega\setminus B_n)}\\
&\le C_3\,C_1\,\left(1+\lambda\,\right)^\frac{N}{p^2}\, \|v\|_{L^p(\Omega)}\,e^{-\alpha\,n}\\
&\le \Big(C_3\,C_1\,  e^{2\,\alpha} \left(1+\lambda\,\right)^\frac{N}{p^2}\Big)\,\|v\|_{L^p(\Omega)}\,e^{-\alpha\,|x|}.
\end{split}
\]
The claimed estimate \eqref{decayinfty} would follow by combining the latter with \eqref{limitan} and setting
\[
\mathcal{C}:=\max\left\{C_2\,e^{2\,\alpha}\,\lambda^\frac{N}{p^2},\,C_3\,C_1\,  e^{2\,\alpha} \left(1+\lambda\,\right)^\frac{N}{p^2}\right\}.
\]
{\it Exponential decay in $L^p(\Omega)$: case $\mathcal{E}_p(\Omega)<+\infty$}. We are now going to show the validity of \eqref{decayp}. 
Let us start by noticing that, thanks to the choice of $\lambda$, we have
\[
\frac{\lambda}{\mathcal{E}_p(\Omega)}<1.
\]
We introduce the following function of one real variable
\[
\begin{array}{ccrcc}
h&:& \big[0,\min\left\{1,1/(p-1)\right\}\big]&\to&[0,1]\\
&&&&\\
&& t&\mapsto &\dfrac{1-t}{1+t}\,(1-(p-1)\,t),
\end{array}
\]
which is bijective and decreasing.
Thus, there exists $0<\varepsilon_{\Omega,\lambda}<1$ with the following property\footnote{Just observe that
\[
\frac{1}{2}\,\left(1+\frac{\lambda}{\mathcal{E}_p(\Omega)}\right)<1\qquad \text{and}\qquad \lim_{t\searrow 0} h(t)=1.
\]}
\begin{equation}\label{scelgoeps}
h(\varepsilon_{\Omega,\lambda})=\frac{1}{2}\,\left(1+\frac{\lambda}{\mathcal{E}_p(\Omega)}\right).
\end{equation}
We also notice that, by definition of $\mathcal{E}_p(\Omega)$, there exists $r_0=r_{0}(\Omega)>0$,  such that
\begin{equation}\label{scelgoR}
(1-\varepsilon_{\Omega,\lambda})\,\mathcal{E}_p(\Omega)\le \lambda_{1,p}(\Omega\setminus \overline{B_R}), \qquad \text{for every}\ R\ge r_0.
\end{equation}
For every $R>0$, we take the following radially symmetric Lipschitz cut-off function 
\begin{equation}\label{cutinfinito}
\eta(x)=\min\Big\{(|x|-R)_+,1\Big\}.
\end{equation}
Observe that
\[
0\le \eta\le 1,\qquad \eta\equiv 0 \ \text{on}\ B_R,\qquad \eta\equiv 1\ \text{on}\ \mathbb{R}^N\setminus B_{R+1},\qquad \|\nabla \eta\|_{L^\infty(\mathbb{R}^N)}= 1.
\]
We want to use this cut-off ``at infinity'' to obtain a suitable Caccioppoli inequality for $v$. 
More precisely, observe that by assumption $v$ satisfies the following the inequality 

\[
\int_{\Omega}\langle|\nabla v|^{p-2}\nabla v, \nabla \varphi\rangle\, dx\le \lambda\int_{\Omega}v^{p-1}\,\varphi\,dx,\qquad \text{for every}\ \varphi\in W^{1,p}_0(\Omega),\ \varphi\ge0.
\]

We use it with the test function  $\varphi=\eta^p\,v$, which is feasible. 
This gives
\begin{equation}
\label{caccio1}
p\int_{\Omega}\langle|\nabla v|^{p-2}\,\nabla v, \nabla \eta\rangle\, v\,\eta^{p-1}\,dx+ \int_{\Omega}|\nabla v|^{p}\,\eta^{p}\le  \lambda\,\int_{\Omega}v^{p}\,\eta^{p}\,dx.
\end{equation}
By using Young's inequality, we get for every $\delta>0$
\[
p\,\int_\Omega \langle |\nabla v|^{p-2}\,\nabla v,\nabla \eta\rangle\,\eta^{p-1}\,v\,dx\ge -(p-1)\,\delta\,\int_\Omega |\nabla v|^p\,\eta^p-\delta^{1-p}\,\int_\Omega |\nabla \eta|^p\,v^{p}\,dx.
\]

We plug this estimate into \eqref{caccio1}, so to readily deduce
\begin{equation}
\label{colbaffo}
(1-(p-1)\,\delta)\,\int_{\Omega}|\nabla v|^{p}\,\eta^{p}\,dx\le \delta^{1-p}\,\int_\Omega |\nabla \eta|^p\,v^{p}\,dx+ \lambda\,\int_{\Omega}v^{p}\,\eta^{p}\,dx.
\end{equation}
In particular, by choosing $\delta=\varepsilon_{\lambda,\Omega}$, we obtain
\begin{equation}\label{caccio2}
(1-(p-1)\,\varepsilon_{\Omega,\lambda})\,\int_{\Omega}|\nabla v|^{p}\,\eta^{p}\,dx\le \varepsilon_{\Omega,\lambda}^{1-p}\,\int_\Omega |\nabla \eta|^p\,v^{p}\,dx+ \lambda\,\int_{\Omega}v^{p}\,\eta^{p}\,dx.
\end{equation}
We now employ the elementary inequality given in Lemma \ref{lm:triangolare} below, with
\[
\varepsilon=\varepsilon_{\Omega,\lambda},\qquad a=\eta\,\nabla v\qquad \text{and}\qquad b=v\,\nabla \eta.
\] 
Thus, we have 
\[
\Big||\nabla(v\,\eta)|^p-|\nabla v|^p\,\eta^p\Big|\le \varepsilon_{\Omega,\lambda}\,|\nabla v|^p\,\eta^p+\varepsilon_{\Omega,\lambda}^{1-p}\,c_{p}\,v^p\,|\nabla \eta|^p.
\]
In particular, we can infer that
\[
\begin{split}
(1-(p-1)\,\varepsilon_{\Omega,\lambda})\,\int_{\Omega}|\nabla (v\,\eta)|^{p}\,dx&\le (1-(p-1)\,\varepsilon_{\Omega,\lambda})\,(1+\varepsilon_{\Omega,\lambda})\,\int_{\Omega}|\nabla v|^{p}\,\eta^{p}\,dx\\
&+(1-(p-1)\,\varepsilon_{\Omega,\lambda})\,\varepsilon_{\Omega,\lambda}^{1-p}\,c_{p}\,\int_\Omega v^p\,|\nabla \eta|^p\,dx\\
&\le (1+\varepsilon_{\Omega,\lambda})\,\lambda\,\int_{\Omega}v^{p}\,\eta^{p}\,dx\\
&+\varepsilon_{\Omega,\lambda}^{1-p}\,\Big[(1+\varepsilon_{\Omega,\lambda})+ (1-(p-1)\,\varepsilon_{\Omega,\lambda})\,c_{p}\Big]\,\int_\Omega |\nabla \eta|^p\,v^{p}\,dx.
\end{split}
\]
In the second inequality, we used \eqref{caccio2}. We now observe that 
\[
\int_{\Omega}|\nabla (v\,\eta)|^{p}\,dx\ge \lambda_{1,p}(\Omega\setminus\overline{B_R})\,\int_\Omega v^p\,\eta^p\,dx,
\]
thanks to the Poincar\'e inequality for $\eta\, u\in W^{1,p}_0(\Omega\setminus \overline{B_{R}})$.
By combining the last two estimates, we obtain
\begin{equation}
\label{caccio3}
\begin{split}
\lambda_{1,p}(\Omega\setminus\overline{B_R})\,\int_\Omega v^p\,\eta^p\,dx&\le \frac{1+\varepsilon_{\Omega,\lambda}}{(1-(p-1)\,\varepsilon_{\Omega,\lambda})}\,\lambda\,\int_{\Omega}v^{p}\,\eta^{p}\,dx\\ 
&+\varepsilon_{\Omega,\lambda}^{1-p}\,\left[\frac{(1+\varepsilon_{\Omega,\lambda})}{(1-(p-1)\,\varepsilon_{\Omega,\lambda})}+c_{p}\right]\,\int_\Omega |\nabla \eta|^p\,v^{p}\,dx.
\end{split}
\end{equation}
We can further estimate from below the leftmost term, by applying \eqref{scelgoR}, at least for large $R$. We thus obtain for every $R\ge r_0$
\[
\begin{split}
(1-\varepsilon_{\Omega,\lambda})\,\mathcal{E}_p(\Omega)\,\,\int_\Omega v^p\,\eta^p\,dx&\le\frac{1+\varepsilon_{\Omega,\lambda}}{(1-(p-1)\,\varepsilon_{\Omega,\lambda})}\,\lambda\,\int_{\Omega}v^{p}\,\eta^{p}\,dx\\ 
&+\varepsilon_{\Omega,\lambda}^{1-p}\,\left[\frac{(1+\varepsilon_{\Omega,\lambda})}{(1-(p-1)\,\varepsilon_{\Omega,\lambda})}+ c_{p}\right]\,\int_\Omega |\nabla \eta|^p\,v^{p}\,dx.
\end{split}
\]
We now use the properties of the cut-off function $\eta$: we get in particular
\[
\begin{split}
(1-\varepsilon_{\Omega,\lambda})\,\mathcal{E}_p(\Omega)\,\int_{\Omega\setminus B_{R+1}} v^p\,dx&\le\frac{1+\varepsilon_{\Omega,\lambda}}{(1-(p-1)\,\varepsilon_{\Omega,\lambda})}\,\lambda\,\int_{\Omega\setminus B_R}v^{p}\,dx\\ 
&+\varepsilon_{\Omega,\lambda}^{1-p}\,\left[\frac{(1+\varepsilon_{\Omega,\lambda})}{(1-(p-1)\,\varepsilon_{\Omega,\lambda})}+c_{p}\right]\,\int_{\Omega\cap(B_{R+1}\setminus B_R) } v^{p}\,dx.
\end{split}
\]
We further decompose
\[
\int_{\Omega\setminus B_R}v^{p}\,dx=\int_{\Omega\setminus B_{R+1}}v^{p}\,dx+\int_{\Omega\cap (B_{R+1}\setminus B_R)}v^{p}\,dx,
\]
thus with simple manipulations we arrive at
\[
\begin{split}
\left[\left(\frac{1-\varepsilon_{\Omega,\lambda}}{1+\varepsilon_{\Omega,\lambda}}\right)\right.&(1-(p-1)\varepsilon_{\Omega,\lambda})\,\mathcal{E}_p(\Omega)-\lambda \Big]\,\int_{\Omega\setminus B_{R+1}}v^p\,dx\ \\
&\le \left[ \varepsilon_{\Omega,\lambda}^{1-p}\,\left(1+ \frac{(1-(p-1)\varepsilon_{\Omega,\lambda})}{1+\varepsilon_{\Omega,\lambda}}\,c_{p}\right)+ \lambda\right]\,\int_{\Omega\cap (B_{R+1}\setminus B_R)}v^{p}\,dx.
\end{split}
\]
We polish a bit the previous estimate, by labeling the relevant constant. First, we observe that 
\[
\left(\frac{1-\varepsilon_{\Omega,\lambda}}{1+\varepsilon_{\Omega,\lambda}}\right)\,(1-(p-1)\varepsilon_{\Omega,\lambda})\,\mathcal{E}_p(\Omega)-\lambda=\frac{\mathcal{E}_p(\Omega)-\lambda}{2},
\]
by virtue of \eqref{scelgoeps}.
Then we set
\[
C_4=C_4(\Omega, p,\lambda)
:= \frac{2}{\mathcal{E}_p(\Omega)-\lambda}\,\left[\varepsilon_{\Omega,\lambda}^{1-p}\,\left(1+\frac{(1-(p-1)\varepsilon_{\Omega,\lambda})}{1+\varepsilon_{\Omega,\lambda}}\,c_{p}\right)+ \lambda\right].
\]
With this notation, we have obtained
\begin{equation}
\label{daquiuguale}
\int_{\Omega\setminus B_{R+1}}v^p\,dx\le C_4\int_{\Omega\cap (B_{R+1}\setminus B_R)}v^{p}\,dx.
\end{equation}
From the previous estimate, we can now easily deduce the desired exponential decay in the $L^p$ norm of $v$. 
It suffices to set
\[
A(R): = \int_{\Omega\setminus B_R}v^p\,dx,
\]
and observe that \eqref{daquiuguale} can be equivalently rewritten as
\[
A(R+1)\le\left(\frac{C_4}{1+C_{4}}\right)\,A(R),\qquad \text{for every}\ R\ge r_0.
\]
In particular for $R\ge r_0+1$ we have
\[
A(R)\le A(r_0+ \lfloor R- r_0\rfloor )\le \left(\frac{C_4}{1+C_4}\right)^{\lfloor R-r_0\rfloor}A(r_0)\le \left(\frac{C_4}{1+C_{4}}\right)^{ R-r_0 -1}\,\int_{\Omega}v^p\,dx.
\]
where we also used that
\[
A(r_0)= \int_{\Omega\setminus B_{r_0}}v^p\,dx\le \int_{\Omega}v^p\,dx.
\]
All in all, we have shown that
\[
\int_{\Omega\setminus B_{R}}v^p\,dx\le \left(\frac{C_4}{1+C_4}\right)^{R}\,\left(\frac{1+C_4}{C_4}\right)^{r_0+1}\,\int_{\Omega}v^p\,dx,\qquad  \text{for every}\ R\ge r_0+1.
\]
On the other hand, for $0<R<r_0+1$ we have
\[
\begin{split}
\int_{\Omega\setminus B_{R}}v^p\,dx\le \int_\Omega v^p\,dx&=\left(\frac{C_4}{1+C_4}\right)^{R}\, \left(\frac{1+C_4}{C_4}\right)^{R}\,\int_\Omega v^p\,dx\\
&\le \left(\frac{C_4}{1+C_4}\right)^{R}\, \left(\frac{1+C_4}{C_4}\right)^{r_0+1}\,\int_\Omega v^p\,dx.
\end{split}
\]
Thus, by setting
\[
C_1:=\left(1+\frac{1}{C_4}\right)^\frac{r_0+1}{p}\qquad \text{and}\qquad \alpha:=\frac{1}{p}\,\log\left(1+\frac{1}{C_4}\right),
\]
we get \eqref{decayp}. 
\par We conclude this part by proving the monotonicity property of the positive exponent $\alpha$. Indeed, by recalling the definition of the constant $C_4$, the exponent $\alpha$ is given by
\[
\alpha=\frac{1}{p}\,\log\left(1+\frac{1}{C_4}\right)=\frac{1}{p}\,\log\left\{1+\frac{\mathcal{E}_p(\Omega)-\lambda}{2}\,\frac{1}{\left[\varepsilon_{\Omega,\lambda}^{1-p}\,\left(1+\dfrac{(1-(p-1)\varepsilon_{\Omega,\lambda})}{1+\varepsilon_{\Omega,\lambda}}\,c_{p}\right)+ \lambda\right]}\right\}.
\]
In turn, by \eqref{scelgoeps} the constant $\varepsilon_{\Omega,\lambda}$ is given by 
\[
\varepsilon_{\Omega,\lambda}=h^{-1}\left(\frac{1}{2}\,\left(1+\frac{\lambda}{\mathcal{E}_p(\Omega)}\right)\right),
\]
 Thus, by recalling that $h$ (and thus $h^{-1}$, as well) is decreasing, we get that $\lambda\mapsto \varepsilon_{\Omega,\lambda}$ is decreasing, with 
\[
\lim_{\lambda\nearrow \mathcal{E}_p(\Omega)} \varepsilon_{\Omega,\lambda}=\lim_{\lambda\nearrow \mathcal{E}_p(\Omega)}h^{-1}\left(\frac{1}{2}\,\left(1+\frac{\lambda}{\mathcal{E}_p(\Omega)}\right)\right)=h^{-1}(1)=0.
\]
Accordingly, it is not too complicated to see that $\alpha$ decreasingly depends on $\lambda$ and 
\[
\lim_{\lambda\nearrow \mathcal{E}_p(\Omega)} \alpha=0,
\]
as claimed.
\vskip.2cm\noindent
{\it Exponential decay in $L^p(\Omega)$: case $\mathcal{E}_p(\Omega)=+\infty$}. It is enough to reproduce the previous arguments, by replacing in every occurrence $\mathcal{E}_p(\Omega)$ by any fixed number $\mathcal{E}$ strictly greater than $\lambda$. For example, one could take $\mathcal{E}=\vartheta\,\lambda$, where $\vartheta\ge 2$ is a fixed constant. Accordingly, we choose the constant $\varepsilon_\vartheta$ so to satisfy
\begin{equation}\label{scelgoeps2}
h(\varepsilon_\vartheta)=\frac{1+\vartheta}{2\,\vartheta},
\end{equation}
in place of \eqref{scelgoeps}.
Observe that actually such a choice is independent of both $\Omega$ and $\lambda$, this time. The proof then runs exactly as before, we leave the details to the reader. We just observe that we get this time
\[
\alpha=\frac{1}{p}\,\log\left\{1+\frac{(\vartheta-1)\,\lambda}{2}\,\frac{1}{\left[\varepsilon_\vartheta^{1-p}\,\left(1+\dfrac{(1-(p-1)\varepsilon_\vartheta)}{1+\varepsilon_\vartheta}\,c_{p}\right)+ \lambda\right]}\right\}.
\]
Since $\lambda\ge \lambda_{1,p}(\Omega)$, we have
\[
\alpha\ge \frac{1}{p}\,\log\left\{1+\frac{(\vartheta-1)\,\lambda_{1,p}(\Omega)}{2}\,\frac{1}{\left[\varepsilon_\vartheta^{1-p}\,\left(1+\dfrac{(1-(p-1)\varepsilon_\vartheta}{1+\varepsilon_\vartheta}\,c_{p}\right)+ \lambda_{1,p}(\Omega)\right]}\right\}.
\]
Moreover,  by construction,  using that $h^{-1}$ is decreasing and   $\vartheta\ge 2$, we have that 
\[
\varepsilon_{\vartheta}=h^{-1}\left(\frac{1+\vartheta}{2\,\vartheta}\right)\ge h^{-1}\left(\frac{3}{4}\right)=:\overline{\varepsilon}.
\] We can thus infer
\[
\alpha\ge \frac{1}{p}\,\log\left\{1+\frac{(\vartheta-1)\,\lambda_{1,p}(\Omega)}{2}\,\frac{1}{\left[\overline{\varepsilon}^{1-p}\,\left(1+\dfrac{c_p}{1+\overline{\varepsilon}}\right)+ \lambda_{1,p}(\Omega)\right]}\right\}=:\frac{1}{p}\,\log\left(1+\frac{\vartheta-1}{K_{\Omega,p}}\right).
\]
Hence, we can obtain the claimed exponential decay with the last exponent, which is independent of $\lambda$ and can be made arbitrarily large, since $\vartheta\ge 2$ is arbitrary.
\end{proof}
\begin{rem}
In the case $\mathcal{E}_p(\Omega)=+\infty$, as one may easily expect, taking larger and larger exponents $\alpha$ leads to a quantitative loss in the estimate. In other words, the constant $\mathcal{C}$ blows-up, accordingly.
Indeed, a closer inspection of the proof shows that in this case we can take
\[
\mathcal{C}=\max\left\{C_2\,e^{2\,\alpha}\,\lambda^\frac{N}{p^2},\,C_3\,\left(1+\frac{1}{C_4}\right)^\frac{r_0+1}{p}\,  e^{2\,\alpha} \left(1+\lambda\,\right)^\frac{N}{p^2}\right\},
\]
thus it diverges to $+\infty$, as $\alpha$ goes to $+\infty$.
We also observe that
\[
\lim_{\alpha\to+\infty}C_4= 0,
\]
as well as possibly
\[
\lim_{\alpha\to +\infty}r_0=+\infty.
\]
\end{rem}
As a consequence of the previous result, we can also obtain some decay and integrability properties of the gradient. This is the content of the following result, that we record for completeness.
\begin{coro}
\label{coro:gradienti}
Let $1<p<\infty$ and let $\Omega\subseteq\mathbb{R}^N$ be an open set such that $\lambda_{1,p}(\Omega)>0$. For a constant $\lambda$ such that 
\[
\lambda_{1,p}(\Omega)\le \lambda<\mathcal E_p(\Omega),
\] 
we consider $v\in W^{1,p}_0(\Omega)$ a nonnegative subsolution of the equation \eqref{subsol}.
Then\footnote{We use the notation $\mathcal{C}$ for the same constant of Proposition \ref{prop:lambdadecay}.}:
\begin{itemize}
\item if $\mathcal{E}_p(\Omega)<+\infty$, there exists $M=M(p,\lambda,\Omega)>0$ such that 
 \begin{equation}
\label{decaygrad}
\|\nabla v\|_{L^p(\Omega\setminus B_{R+1})}\le  M\, e^{-\alpha\,R}\,\|v\|_{L^p(\Omega)}, \qquad \text{for every}\ R>0,
\end{equation}
where $\alpha$ is the same exponent as in Proposition \ref{prop:lambdadecay}. Moreover,  for every $0<\beta<\alpha$ there exists a constant  
\[
\widetilde{\mathcal{C}}= \widetilde{\mathcal{C}}(\mathcal{C}, p, \lambda, \beta, \alpha-\beta)>0,
\]
such that 
\begin{equation}
\label{decaygradiente}
\int_{\Omega}\left|\nabla v\right|^{p}\,e^{\beta p|x|}\,dx\le  \widetilde{\mathcal{C}}\,\|v\|^p_{L^{p}(\Omega)};
\end{equation}
\item if $\mathcal{E}_p(\Omega)=+\infty$, for every $\alpha>0$ there exists a constant $M=M(p, \lambda, \Omega,\alpha)>0$ such that \eqref{decaygrad} holds. Moreover,  for every $\beta>0$ there exists a constant  
\[
\widetilde{\mathcal{C}}= \widetilde{\mathcal{C}}(\mathcal{C}, p, \lambda, \beta)>0,
\]
such that \eqref{decaygradiente} holds.
\end{itemize}
In particular, in both cases we have that 
\[
\int_\Omega |\nabla v|^\gamma\,dx<+\infty,\qquad \text{for every}\ 0<\gamma\le p.
\]
\end{coro}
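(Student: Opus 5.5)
The plan is to go back to the Caccioppoli inequality \eqref{colbaffo} from the proof of Proposition \ref{prop:lambdadecay}, now with a cut-off concentrated in an annulus, and to feed in the $L^p$ decay \eqref{decayp} already proved there. The gradient decay \eqref{decaygrad} comes first. The weighted bound \eqref{decaygradiente} is then obtained by summing over dyadic-like unit annuli.

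\emph{Step 1: proof of \eqref{decaygrad}.} For $R>0$ we take the Lipschitz cut-off $\eta$ from \eqref{cutinfinito}: it vanishes on $B_R$, equals $1$ outside $B_{R+1}$, and $|\nabla\eta|\le1$. We use \eqref{colbaffo} with a fixed $\delta$, say $\delta=1/(2(p-1))$ (or $\delta=1/2$ when $p<2$), which only needs $v$ to be a nonnegative subsolution. This gives
\[
\int_{\Omega\setminus B_{R+1}}|\nabla v|^p\,dx\le C_p\Big(\int_{\Omega\cap(B_{R+1}\setminus B_R)}v^p\,dx+\lambda\int_{\Omega\setminus B_R}v^p\,dx\Big)\le C_p\,(1+\lambda)\int_{\Omega\setminus B_R}v^p\,dx .
\]
Inserting \eqref{decayp} yields $\|\nabla v\|_{L^p(\Omega\setminus B_{R+1})}\le (C_p(1+\lambda))^{1/p}C_1\,e^{-\alpha R}\|v\|_{L^p(\Omega)}$. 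This is \eqref{decaygrad} with $M=(C_p(1+\lambda))^{1/p}C_1$. When $\mathcal{E}_p(\Omega)=+\infty$ the same argument works for each $\alpha$, with $C_1=C_1(\alpha)$.

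\emph{Step 2: proof of \eqref{decaygradiente}.} We split $\Omega$ into $\Omega\cap B_1$ and the annuli $A_n=\Omega\cap(B_{n+1}\setminus B_n)$, $n\ge1$. On $B_1$ we bound $e^{\beta p|x|}\le e^{\beta p}$. The total gradient is controlled by testing the subsolution inequality with $v$ itself, which gives $\|\nabla v\|_p^p\le\lambda\|v\|_p^p$. On $A_n$ we have $e^{\beta p|x|}\le e^{\beta p(n+1)}$, and Step 1 with $R=n-1$ gives $\int_{A_n}|\nabla v|^p\le M^p e^{-\alpha p(n-1)}\|v\|_p^p$. Summing over $n$ produces the geometric series $\sum_n e^{-(\alpha-\beta)pn}$, which converges exactly because $\beta<\alpha$. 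Its sum depends on $\alpha-\beta$, which explains the dependence of $\widetilde{\mathcal C}$. If $\mathcal{E}_p(\Omega)=+\infty$, for a given $\beta$ we first choose $\alpha=2\beta$ in Step 1 and then run the same summation.

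\emph{Step 3: integrability of $|\nabla v|^\gamma$.} For $\gamma<p$ we apply H\"older's inequality with exponents $p/\gamma$ and $p/(p-\gamma)$:
\[
\int_\Omega|\nabla v|^\gamma\,dx=\int_\Omega |\nabla v|^\gamma e^{\beta\gamma|x|}\,e^{-\beta\gamma|x|}\,dx\le\Big(\int_\Omega|\nabla v|^pe^{\beta p|x|}\,dx\Big)^{\gamma/p}\Big(\int_{\mathbb{R}^N}e^{-\beta\gamma p|x|/(p-\gamma)}\,dx\Big)^{(p-\gamma)/p}.
\]
Both factors are finite, the first by Step 2 and the second for any $\beta>0$. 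The case $\gamma=p$ is immediate since $v\in W^{1,p}_0(\Omega)$.

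None of these steps is a genuine obstacle; the argument is routine bookkeeping. The only delicate points are two. First, \eqref{colbaffo} must be used with a fixed $\delta$, so that the constant $M$ does not degenerate. Second, the sharp exponent $\alpha$ from Proposition \ref{prop:lambdadecay} has to be kept unchanged through Step 1, so that every $\beta<\alpha$ is allowed in Step 2.
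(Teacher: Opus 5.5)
Your proof is correct. Steps 1 and 3 coincide with the paper's argument: the paper also uses \eqref{colbaffo} with $\delta=1/(2(p-1))$ fed by \eqref{decayp}, and then the same H\"older inequality.

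Step 2 is where you take a genuinely different route. The paper proves \eqref{decaygradiente} by testing the subsolution inequality again, now with the weighted function $\eta^p\,e^{\beta p|x|}\,v$, where $\eta$ is a compactly supported cut-off. After Young's inequality this gives the weighted Caccioppoli inequality $\int_{\Omega\cap B_R}|\nabla v|^p e^{p\beta|x|}\,dx\le C_p\,(\lambda+1+\beta^p)\int_{\Omega\cap B_{R+1}}v^p e^{p\beta|x|}\,dx$. The right-hand side is then controlled by the pointwise decay \eqref{decayinfty} together with the finiteness of $\int_{\mathbb{R}^N}e^{-p(\alpha-\beta)|x|}\,dx$, and finally one lets $R\to\infty$. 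You instead reuse the unweighted tail estimate \eqref{decaygrad} on unit annuli, add the global energy bound $\|\nabla v\|_{L^p(\Omega)}^p\le\lambda\,\|v\|_{L^p(\Omega)}^p$, and sum a geometric series.

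Your route has several advantages:
\begin{itemize}
\item It needs no new test function.
\item It uses only the $L^p$ decay \eqref{decayp}, not the $L^\infty$--$L^p$ estimates behind \eqref{decayinfty}.
\item Your constant, of order $M^p\,e^{2p\beta}/(1-e^{-p(\alpha-\beta)})+\lambda\,e^{p\beta}$, blows up only like $(\alpha-\beta)^{-1}$ as $\beta\nearrow\alpha$. The paper's constant carries the factor $\int_{\mathbb{R}^N}e^{-p(\alpha-\beta)|x|}\,dx$, which behaves like $(\alpha-\beta)^{-N}$.
\end{itemize}
The paper's route, in turn, produces a localized weighted Caccioppoli inequality on every ball, which is of some independent use. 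It also expresses $\widetilde{\mathcal{C}}$ through $\mathcal{C}$, as in the statement; yours goes through $M$, i.e.\ through $C_1$, which is only a bookkeeping difference.

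One small point to tidy: for $n=1$ you apply \eqref{decaygrad} with $R=0$, which lies outside the stated range $R>0$. The same proof works verbatim there, since $C_1\ge 1$. Alternatively, bound the integral over $A_1$ directly by the global energy estimate.
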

\begin{proof}
For simplicity, we consider the case $\mathcal{E}_p(\Omega)<+\infty$, only. The other case can be treated in the same way.
\par
In order to show \eqref{decaygrad}, we go back to the Caccioppoli inequality \eqref{colbaffo}. By choosing $\delta=1/(2\,(p-1))$, we get
\[
\frac{1}{2}\,\int_{\Omega}|\nabla v|^{p}\,\eta^{p}\,dx\le 2^{p-1}\,(p-1)^{p-1}\,\int_\Omega |\nabla \eta|^p\,v^{p}\,dx+ \lambda\,\int_{\Omega}v^{p}\,\eta^{p}\,dx.
\]
Thanks to the properties of the cut-off function $\eta$, this in turn gives
\[
\frac{1}{2}\,\int_{\Omega\setminus B_{R+1}}|\nabla v|^{p}\,dx\le \Big(2^{p-1}\,(p-1)^{p-1}+\lambda\Big)\,\int_{\Omega\setminus B_R} v^{p}\,dx.
\]
In order to conclude, we can apply the exponential decay of the $L^p$ norm proved in \eqref{decayp} on the right-hand side. This gives the claimed decay estimate \eqref{decaygrad}, with
\[
M:= (2^p\,(p-1)^{p-1}+2\,\lambda)^\frac{1}{p}\, C_1.
\] 
The proof of \eqref{decaygradiente} is similar, we simply amend the choice of the test function. We fix $R>0$ and consider a standard cut-off function $\eta\in C^{\infty}_0(B_{R+1})$ satisfying
\[
0\le \eta\le 1,\qquad \eta\equiv 1 \ \text{on}\ B_R,\qquad \|\nabla \eta\|_{L^\infty(\mathbb{R}^N)}\le  C,
\]
for some dimensional constant $C>0$. We take $0<\beta<\alpha$ as in the statement and use in the weak formulation the following test function
\[
\varphi= \eta^{p}\, e^{\beta p|x|}\, v.
\]
We obtain
\[
\begin{split}
\int_{\Omega}|\nabla v|^{p}\,\eta^{p}\, e^{p\beta|x|}\,dx \le \lambda \int_{\Omega}v^{p}\, \eta^{p}\, e^{p\beta|x|}\,dx&-p\,\beta\, \int_{\Omega}\left\langle |\nabla v|^{p-2}\,\nabla v ,  \frac{x}{|x|}\right\rangle\, v\,  \eta^p\, e^{p\beta|x|}\,dx\\
 &-p\,\int_{\Omega}\langle |\nabla v|^{p-2}\,\nabla v, \nabla \eta \rangle \, v\, \eta^{p-1}\, e^{p\beta|x|}\,dx.
\end{split}
\]
We now use Young's inequality in the right-hand side: for every $\delta>0$, we then obtain
\[
\begin{split}
\int_{\Omega}|\nabla u|^{p}\,\eta^{p}\,e^{p\beta|x|}\,dx&\le \lambda \int_{\Omega}v^{p}\,\eta^{p}\,e^{p\beta|x|}\,dx\\
& +\delta\, (p-1)\beta \int_{\Omega} |\nabla v|^{p}\,\eta^{p}\,e^{p\beta|x|}\,dx+ \delta^{1-p}\,\beta\,\int_{\Omega}v^{p}\,\eta^{p}\,e^{p\beta|x|}\,dx \\
&+\delta\, (p-1)\int_{\Omega}|\nabla v|^{p}\, \eta^{p}\,e^{p\beta|x|}\,dx+ \delta^{1-p}\, \int_{\Omega}  v^{p}\,|\nabla \eta|^{p} \, e^{p\beta|x|}\,dx.
\end{split}
\]
We can choose in particular
\[
\delta=\frac{1}{2\,(p-1)\,(1+\beta)},
\]
and absorb in the left-hand side the terms containing $\nabla v$. By exploiting also the properties of $\eta$ and using some simple algebraic manipulations, this leads to
\[
\begin{split}
\int_{\Omega\cap B_R}|\nabla v|^{p}\,e^{p\beta|x|}\,dx\le C_p\,(\lambda +1+\beta^p)\,\int_{\Omega\cap B_{R+1}} v^{p}\, e^{p\beta|x|}\,dx.
\end{split}
\]
We can now appeal to the decay estimate \eqref{decayinfty}, so to obtain
\[
\begin{split}
\int_{\Omega\cap B_R}|\nabla v|^{p}\,e^{p\beta|x|}\,dx\le C_p\,(\lambda +1+\beta^p)\, \mathcal{C}^p\,\left(\int_{\mathbb{R}^N}  \, e^{-p(\alpha-\beta)|x|}\,dx\right) \|v\|^p_{L^{p}(\Omega)}=: \widetilde{\mathcal{C}}\, \|v\|^p_{L^{p}(\Omega)}.
\end{split}
\]
By taking the limit as $R$ goes to $+\infty$ and using the Monotone Convergence Theorem, we get \eqref{decaygradiente} as desired.
\par
Finally, for every $0<\gamma< p$,  H\"older's inequality provides 
\[
\int_{\Omega}|\nabla v|^\gamma\,dx\le \left(\int_{\Omega}|\nabla v|^{p}\, e^{p\beta |x|}\,dx\right)^{\frac{\gamma}{p}}\left(\int_{\Omega}e^{-\frac{p\beta}{p-\gamma}|x|} \, dx\right)^\frac{p-\gamma}{p}<+\infty.
\]
In light of \eqref{decaygradiente}, this shows the desired integrability.
\end{proof}

\section{Perturbed eigenvalue problems}
\label{sec:4}

We will need the following crucial technical result. We state it in a slightly larger generality than that really needed for our scopes. The proof is largely inspired by that of \cite[Theorem 3.3]{CD}.
\begin{lm}
\label{lm:giggi}
Let $1<p<\infty$ and let $\Omega\subseteq\mathbb{R}^N$ be an open set such that $\lambda_{1,p}(\Omega)>0$. Let $X(\Omega)$ be a Banach space with the following properties:
\begin{itemize}
\item $C^\infty_0(\Omega)\subseteq X(\Omega)$;
\vskip.2cm
\item $X(\Omega)\subseteq W^{1,p}_0(\Omega)$ with continuous inclusion, i.e. there exists $C>0$ such that
\[
\|\nabla \varphi\|_{L^p(\Omega)}\le C\, \|\varphi\|_{X(\Omega)},\qquad \text{for every}\ \varphi\in X(\Omega).
\]
\end{itemize}
We set
\[
\mathcal{X}_{k,p}(\Omega)=\Big\{K\subseteq \mathcal{S}_p(\Omega)\cap X(\Omega)\, :\, K\ \text{symmetric and compact},\, \gamma(K;X(\Omega))\ge k\Big\},
\]
and
\[
\mathcal{W}_{k,p}(\Omega)=\Big\{K\subseteq \mathcal{S}_p(\Omega)\cap W^{1,p}_0(\Omega)\, :\, K\ \text{symmetric and compact},\, \gamma(K;W^{1,p}_0(\Omega))\ge k\Big\}.
\]
Then, for every $K\in \mathcal{W}_{k,p}(\Omega)$ and every $\delta>0$, there exists $K_\delta\in \mathcal{X}_{k,p}(\Omega)$ such that
\[
\max_{\varphi\in K_\delta} \int_\Omega |\nabla \varphi|^p\,dx\le \max_{\varphi\in K} \int_\Omega |\nabla \varphi|^p\,dx+\delta.
\]
\end{lm}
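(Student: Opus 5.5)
Given $K\in\mathcal{W}_{k,p}(\Omega)$ and $\delta>0$, the plan is to approximate the identity on $K$ by a continuous odd map with values in a finite-dimensional subspace spanned by $C^\infty_0(\Omega)$ functions. Then we renormalize in $L^p$ and take the image as $K_\delta$. Finite-dimensionality makes all norms on the span equivalent. Hence the image is compact in $X(\Omega)$, and odd maps there are continuous in both topologies, which lets us transfer the genus.

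\textbf{Step 1 (finite-dimensional approximation).} Fix $\varepsilon>0$ and use compactness of $K$ in $W^{1,p}_0(\Omega)$. Pick a finite $\varepsilon$-net $\{\psi_1,\dots,\psi_m\}\subseteq K$, and replace each $\psi_i$ by some $\phi_i\in C^\infty_0(\Omega)$ with $\|\phi_i-\psi_i\|_{W^{1,p}}<\varepsilon$. Take a partition of unity $\{\theta_i\}$ on $K$ subordinate to the balls $B(\psi_i,2\varepsilon)$ and symmetrize it: use the symmetric net $\pm\psi_i$ with $\phi_{-i}=-\phi_i$ and $\theta_{-i}(u)=\theta_i(-u)$. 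Define
\[
T(u)=\sum_i \theta_i(u)\,\phi_i .
\]
Then $T$ is odd, continuous from $K$ (with the $W^{1,p}_0$ topology) into $F=\mathrm{span}\{\phi_i\}\subseteq C^\infty_0(\Omega)\subseteq X(\Omega)$, and $\|T(u)-u\|_{W^{1,p}(\Omega)}\le 3\varepsilon$ for all $u\in K$.

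\textbf{Step 2 (normalization and energy bound).} Since $\|u\|_{L^p}=1$, for $\varepsilon$ small we have $\|T(u)\|_{L^p}\ge 1-3\varepsilon>0$. So $N(u)=T(u)/\|T(u)\|_{L^p}$ is well-defined, odd and continuous. Set $K_\delta=N(K)$. It is compact, being the continuous image of a compact set into the finite-dimensional $F$, where the $X$-norm is equivalent to any other norm. It is symmetric and lies in $\mathcal{S}_p(\Omega)\cap X(\Omega)$. Moreover
\[
\|\nabla N(u)\|_{L^p}\le \frac{\|\nabla u\|_{L^p}+3\varepsilon}{1-3\varepsilon},
\]
and $\max_K\|\nabla u\|_{L^p}$ is finite. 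Choosing $\varepsilon=\varepsilon(\delta)$ small therefore gives the claimed inequality.

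\textbf{Step 3 (genus).} This is the main point. We need $\gamma(K_\delta;X(\Omega))\ge k$. Since $K_\delta\subseteq F$ and the $X$-topology and $W^{1,p}$-topology coincide on $F$, we have $\gamma(K_\delta;X)=\gamma(K_\delta;W^{1,p}_0)$. Now $N:K\to K_\delta$ is odd and continuous in the $W^{1,p}_0$ topology. By the monotonicity property of the genus under odd continuous maps, $\gamma(K_\delta)\ge\gamma(K)\ge k$: any odd continuous $f:K_\delta\to\mathbb{S}^{j-1}$ gives $f\circ N:K\to\mathbb{S}^{j-1}$. The care needed is only in making the partition of unity odd, via the symmetrized net above, so that $N$ is odd. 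One must also not use $\lambda_{1,p}(\Omega)>0$ beyond ensuring the $W^{1,p}_0$ norm is controlled by gradients where needed.
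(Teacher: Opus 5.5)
Your proposal is correct, and it takes a genuinely different route from the paper.

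\textbf{The paper's route.} The paper also starts from a finite net of $K$ and smooth approximants $\psi_i\in C^\infty_0(\Omega)$. It then uses the nearest-point projection $\mathrm{Proj}_F$ onto the convex hull $F$ of $\{\pm\psi_1,\dots,\pm\psi_\ell\}$, computed in the norm $\|\nabla\cdot\|_{L^p(\Omega)}$.
\begin{itemize}
\item Oddness comes for free from the symmetry of $F$ and the uniqueness of the projection, which rests on strict convexity of this norm.
\item The lower bound $\|\mathrm{Proj}_F(\varphi)\|_{L^p(\Omega)}\ge 1-c_\Omega\,r$ comes from the Poincar\'e inequality. This is where $\lambda_{1,p}(\Omega)>0$ is used.
\item The energy bound comes from convexity and evenness of the Dirichlet integral, giving $\int_\Omega|\nabla\psi|^p\,dx\le\max_i\int_\Omega|\nabla\psi_i|^p\,dx$ for all $\psi\in F$.
\item The genus is then transferred via Lemma \ref{lm:generoditoma}.
\end{itemize}

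\textbf{Your route.} Your Dugundji-type odd approximation of the identity gives the uniform estimate $\|T(u)-u\|_{W^{1,p}(\Omega)}\le 3\varepsilon$. Both the $L^p$ lower bound and the energy bound then follow from the triangle inequality alone. Since you measure everything in the full $W^{1,p}$ norm, your argument never uses $\lambda_{1,p}(\Omega)>0$, contrary to what your last sentence suggests. Your genus transfer only uses that $K_\delta$ sits in the finite-dimensional space $F\subseteq C^\infty_0(\Omega)$. So your version is slightly more general. The price is that you must build the odd map by hand, rather than getting it from a canonical projection.

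\textbf{The step to write carefully: symmetrization.} Taken literally, $\theta_{-i}(u):=\theta_i(-u)$ produces weights summing to $2$, not $1$. There are two easy fixes.
\begin{itemize}
\item Average: for a partition of unity $\{\theta_j\}$ subordinate to the symmetric cover $\{B(\pm\psi_i,2\varepsilon)\}$, set $\tilde\theta_j(u)=\tfrac12\big(\theta_j(u)+\theta_{-j}(-u)\big)$. This is again a partition of unity subordinate to that cover, and it satisfies $\tilde\theta_{-j}(u)=\tilde\theta_j(-u)$.
\item Or keep your definition and note that, with $S(u)=\sum_{i=1}^m\theta_i(u)\,\phi_i$, your map is $T(u)=S(u)-S(-u)$. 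Since $-u\in K$, you get $\|\tfrac12 T(u)-u\|_{W^{1,p}(\Omega)}<3\varepsilon$.
\end{itemize}
The factor $\tfrac12$ disappears after the $L^p$ normalization, so the rest of your argument goes through unchanged.
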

\begin{proof}
We first notice that the assumption $\lambda_{1,p}(\Omega)>0$ ensures that 
\begin{equation}
\label{equivale}
\varphi\mapsto \|\nabla \varphi\|_{L^p(\Omega)},
\end{equation}
is an equivalent norm on $W^{1,p}_0(\Omega)$.
We take $K$ and $\delta>0$, as in the statement. We set for simplicity
\[
\mathrm{M}:=\max_{\varphi\in K} \int_\Omega |\nabla \varphi|^p\,dx,
\]
which is a positive real number and define
\begin{equation}
\label{costantina}
c_\Omega:=\left(\frac{1}{\lambda_{1,p}(\Omega)}\right)^\frac{1}{p}\,\frac{5}{2}.
\end{equation}
At last, we fix a radius $r=r(\mathrm{M},c_\Omega,\delta)>0$ such that
\begin{equation}
\label{chosen1}
r<\frac{1}{2\,c_\Omega},
\end{equation}
and
\begin{equation}
\label{chosen2}
(1-c_\Omega\,r)^{-p}\,\left[1+\frac{r}{2\,\mathrm{M}^\frac{1}{p}}\right]^p\le 1+\frac{\delta}{\mathrm{M}}.
\end{equation}
Observe that such a choice is feasible, since for $r\searrow 0$ the left-hand side in \eqref{chosen2} converges to $1$, while the right-hand side is strictly larger than $1$. 
\par
The reason for this choice will be clear in a while, let us now proceed with the construction of the set $K_\delta$ declared in the statement.
\vskip.2cm\noindent
We start by observing that $K$ is compact, thus we can find a finite number of functions $\{\varphi_1,\dots,\varphi_\ell\}\subseteq K$ such that 
\[
K\subseteq \bigcup_{i=1}^\ell \left\{\varphi\in W^{1,p}_0(\Omega)\, :\, \|\nabla \varphi-\nabla\varphi_i\|_{L^p(\Omega)}<\frac{r}{2}\right\},
\]
where $r>0$ is the radius previously introduced.
In particular, each $\varphi_i$ belongs to $\mathcal{S}_p(\Omega)$, thus it has unit $L^p(\Omega)$ norm. By density of $C^\infty_0(\Omega)$ in $W^{1,p}_0(\Omega)$, we can choose $\{\psi_1,\dots,\psi_\ell\}\subseteq C^\infty_0(\Omega)$ such that
\[
\|\nabla \psi_i-\nabla\varphi_i\|_{L^p(\Omega)}<\frac{r}{2},\qquad \text{for}\ i\in\{1,\dots,\ell\}.
\]
This implies that for every $i\in\{1,\dots,\ell\}$ we have
\[
\left\{\varphi\in W^{1,p}_0(\Omega)\, :\, \|\nabla \varphi-\nabla\varphi_i\|_{L^p(\Omega)}<\frac{r}{2}\right\}\subseteq \left\{\varphi\in W^{1,p}_0(\Omega)\, :\, \|\nabla \varphi-\nabla\psi_i\|_{L^p(\Omega)}<r\right\}
\]
and thus we get
\begin{equation}
\label{annalaura}
K\subseteq \bigcup_{i=1}^\ell \left\{\varphi\in W^{1,p}_0(\Omega)\, :\, \|\nabla \varphi-\nabla\psi_i\|_{L^p(\Omega)}<r\right\},
\end{equation}
as well. We define 
\[
F:=\{\pm \psi_1,\dots,\pm\psi_\ell\}^{\rm ch},
\]
i.e. this is the {\it convex hull} of the finite family $\{\pm \psi_1,\dots,\pm\psi_\ell\}$. Observe that, since $C^\infty_0(\Omega)\subseteq X(\Omega)$, the previous set $F$ is in particular a symmetric convex closed subset of $X(\Omega)$, which is compact, as well. By indicating with $\mathrm{Proj}_F$ the projection operator from $W^{1,p}_0(\Omega)$ to $F$, we then set
\[
\widetilde{K}=\mathrm{Proj}_F(K).
\]
Here the projection is intended with respect to the equivalent norm \eqref{equivale}
on $W^{1,p}_0(\Omega)$.
\par
Let us verify that the elements of $\widetilde{K}$ are uniformly detached from the origin. More precisely, we claim that
\begin{equation}
\label{louer}
\|\psi\|_{L^p(\Omega)}\ge 1-c_\Omega\,r,\qquad \text{for every}\ \psi\in\widetilde{K},
\end{equation}
where $c_\Omega$ is the constant defined in \eqref{costantina}. Indeed, let us take $\psi\in \widetilde{K}$, this implies that $\psi=\mathrm{Proj}_F(\varphi)$ for some $\varphi\in K$. By property \eqref{annalaura}, there exists $i\in\{1,\dots,\ell\}$ such that 
\[
\|\nabla \varphi-\nabla \psi_i\|_{L^p(\Omega)}<r.
\]
Thus, from the triangle inequality and the properties of projection operators (recall that $\psi_i\in F$), we have
\[
\begin{split}
\|\psi\|_{L^p(\Omega)}=\|\mathrm{Proj}_F(\varphi)\|_{L^p(\Omega)}&\ge \|\varphi\|_{L^p(\Omega)}-\|\mathrm{Proj}_F(\varphi)-\varphi\|_{L^p(\Omega)}\\
&\ge \|\varphi\|_{L^p(\Omega)}-\left(\frac{1}{\lambda_{1,p}(\Omega)}\right)^\frac{1}{p}\,\|\nabla\mathrm{Proj}_F(\varphi)-\nabla\varphi\|_{L^p(\Omega)}\\
&\ge \|\varphi\|_{L^p(\Omega)}-\left(\frac{1}{\lambda_{1,p}(\Omega)}\right)^\frac{1}{p}\,\|\nabla\psi_i-\nabla\varphi\|_{L^p(\Omega)}\\
&\ge \|\varphi_i\|_{L^p(\Omega)}-\|\varphi_i-\varphi\|_{L^p(\Omega)}-\left(\frac{1}{\lambda_{1,p}(\Omega)}\right)^\frac{1}{p}\,\|\nabla\psi_i-\nabla\varphi\|_{L^p(\Omega)}\\
&\ge 1-\left(\frac{1}{\lambda_{1,p}(\Omega)}\right)^\frac{1}{p}\,\left(\|\nabla\varphi_i-\nabla\varphi\|_{L^p(\Omega)}+\|\nabla\psi_i-\nabla\varphi\|_{L^p(\Omega)}\right)\\
&\ge 1-\left(\frac{1}{\lambda_{1,p}(\Omega)}\right)^\frac{1}{p}\,\left(\|\nabla\varphi_i-\nabla\psi_i\|_{L^p(\Omega)}+2\,\|\nabla\psi_i-\nabla\varphi\|_{L^p(\Omega)}\right)\\
&\ge 1-\left(\frac{1}{\lambda_{1,p}(\Omega)}\right)^\frac{1}{p}\,\frac{5\,r}{2}.
\end{split}
\]
We remark that we have also used Poincar\'e inequality twice.
In conclusion, we have obtained \eqref{louer}, as claimed.
\par
Now, we observe that the set $\widetilde{K}$ is symmetric and, by virtue of the continuity of the projection operator, is compact  with respect to the norm topology of $W^{1,p}_0(\Omega)$. Actually, it is compact with respect to the norm topology of $X(\Omega)$, as well. Indeed, we have $\widetilde{K}\subseteq F\subseteq X(\Omega)$ and by construction $F$ in turn is contained in a finite dimensional subspace, where all the norms are equivalent. This is enough to justify our assertion. 
\par
Nevertheless, the set $\widetilde{K}$ is still not the one we are looking for: indeed, it is not a subset of $\mathcal{S}_p(\Omega)$. Thus we have to renormalize its elements. To this aim, we consider the odd continuous functional $T:W^{1,p}_0(\Omega)\to W^{1,p}_0(\Omega)$ given by
\[
T(\psi)=\left\{\begin{array}{rl}
\dfrac{\psi}{\|\psi\|_{L^p(\Omega)}},& \text{if}\ \|\psi\|_{L^p(\Omega)}\ge \dfrac{1}{2},\\
&\\
2\,\psi,& \text{otherwise}.
\end{array}
\right.
\]
Observe that the restriction of $T$ to $F$ takes values in $X(\Omega)$ and it is continuous with respect to the norm topology of $X(\Omega)$.
On account of \eqref{louer} and the choice \eqref{chosen1}, we clearly have 
\begin{equation}
\label{nellasfera}
T(\psi)=\dfrac{\psi}{\|\psi\|_{L^p(\Omega)}}\in \mathcal{S}_p(\Omega)\cap X(\Omega),\qquad \text{for every}\ \psi\in \widetilde{K}.
\end{equation}
Finally, we are ready to define
\[
K_\delta=T(\widetilde{K})=T\circ \mathrm{Proj}_F(K).
\]
From what previously observed, we get that $K_\delta$ is  a subset of $\mathcal{S}_p(\Omega)\cap X(\Omega)$ which is compact in the norm topology of $X(\Omega)$ and  symmetric. 
\par
To show that $K_\delta\in\mathcal{X}_{k,p}(\Omega)$, we have also to estimate its genus, still with respect to the norm topology of $X(\Omega)$.
From \cite[Chapter II, Proposition 5.4]{Str}, by observing that $T\circ \mathrm{Proj}_F$ can be regarded as a continuous odd functional from $W^{1,p}_0(\Omega)$ to itself and by using that $K\in\mathcal{W}_{k,p}(\Omega)$, we deduce that 
\[
\gamma(K_\delta;W^{1,p}_0(\Omega))\ge \gamma(K;W^{1,p}_0(\Omega))\ge k.
\] 
Since, by appealing to Lemma \ref{lm:generoditoma}, we have
\[
\gamma(K_\delta;X(\Omega))=\gamma(K_\delta;W^{1,p}_0(\Omega)),
\]
we get $K_\delta\in \mathcal{X}_{k,p}(\Omega)$, as desired.
\par
We are left with estimating the $p-$Dirichlet integral for functions belonging to $K_\delta$: by construction, for every $\eta\in K_\delta$, we have 
\[
\eta=\frac{\psi}{\|\psi\|_{L^p(\Omega)}},\qquad \text{for some}\ \psi\in\widetilde{K}.
\]
Thus, by \eqref{louer}
\begin{equation}
\label{PL1}
\int_\Omega |\nabla \eta|^p\,dx=\frac{\displaystyle\int_\Omega |\nabla \psi|^p\,dx}{\displaystyle\int_\Omega |\psi|^p\,dx}\le (1-c_\Omega\,r)^{-p}\,\int_\Omega |\nabla\psi|^p\,dx.
\end{equation}
By using that $\psi\in \widetilde{K}=\mathrm{Proj}_F(K)$, we have that $\psi$ can be written as a convex combinations of the functions $\{\pm\psi_1,\dots,\pm\psi_\ell\}$. Thus, by convexity and evenness of the $p-$Dirichlet integral, we obtain
\begin{equation}
\label{PL2}
\int_\Omega |\nabla\psi|^p\,dx\le \max_{\{1,\dots,\ell\}} \int_\Omega |\nabla\psi_i|^p\,dx.
\end{equation}
In order to conclude, we observe that for every $i\in\{1,\dots,\ell\}$ we have by triangle inequality
\[
\begin{split}
\int_\Omega |\nabla\psi_i|^p\,dx&\le \left[\left(\int_\Omega |\nabla\varphi_i|^p\,dx\right)^\frac{1}{p}+\|\nabla \psi_i-\nabla\varphi_i\|_{L^p(\Omega)}\right]^p\\
&\le \left[\mathrm{M}^\frac{1}{p}+\frac{r}{2}\right]^p=\mathrm{M}\,\left[1+\frac{r}{2\,\mathrm{M}^\frac{1}{p}}\right]^p.
\end{split}
\]
It is now time for the weird choice \eqref{chosen2} to come into play: this assures that the last quantity is not larger than $(\mathrm{M}+\delta)\,(1-c_\Omega\,r)^{p}$. Thus, we get
\[
\int_\Omega |\nabla\psi_i|^p\,dx\le (\mathrm{M}+\delta)\,(1-c_\Omega\,r)^{p},\qquad \text{for every}\ i\in\{1,\dots,\ell\}.
\]
By using this information in \eqref{PL2} and going back to \eqref{PL1}, we finally get
\[
\int_\Omega |\nabla \eta|^p\,dx\le \mathrm{M}+\delta,\qquad \text{for every}\ \eta\in K_\delta.
\]
This eventually establishes the claimed assertion.
\end{proof}
For a strongly confining potential $V$, we extend to the weighted space $W^{1,p}_0(\Omega;V)$ (recall Proposition \ref{prop:spazionostro}) the definition of $\mathcal{W}_{k,p}(\Omega)$. Namely, we introduce
\[
\mathcal{W}_{k,p}(\Omega;V)=\Big\{K\subseteq \mathcal{S}_p(\Omega)\cap W^{1,p}_0(\Omega;V)\, :\, K\ \text{symmetric and compact},\, \gamma(K;W^{1,p}_0(\Omega;V))\ge k\Big\}.
\]
Thanks to the previous result, we can prove an approximation results for the minmax values \eqref{minmaxLS}. This is a generalization of \cite[Lemma 4.1]{BraBriPri0} to higher critical values, in the case $q=p$. It is the main result of this section.
\begin{prop}
\label{prop:approssimazione}
Let $1<p<\infty$ and let $\Omega\subseteq\mathbb{R}^N$ be an open set such that $\lambda_{1,p}(\Omega)>0$. Finally, let $V\ge 0$ be a strongly confining potential and let $\{\varepsilon_n\}_{n\in\mathbb{N}}$ be a decreasing infinitesimal sequence of positive numbers.
For every $n\in\mathbb{N}$ and $k\in\mathbb{N}\setminus\{0\}$, we define 
\[
\lambda_{k,p}^{\rm LS}(\Omega;V_n):=\inf_{K\in \mathcal{W}_{k,p}(\Omega;V)} \max_{\varphi\in K}\, \mathcal{G}_n(\varphi),
\]
where 
\[
\mathcal{G}_n(\varphi)=\int_{\Omega}|\nabla \varphi|^{p}\,dx +\int_{\Omega}V_n\,|\varphi|^{p}\,dx\qquad \text{and}\qquad V_n(x)=\varepsilon_n\,V(x).
\]
Then, we have
\begin{equation}
\label{vaiaffa}
\lim_{n\to\infty}\lambda_{k,p}^{\rm LS}(\Omega;V_n)=\lambda_{k,p}^{\rm LS}(\Omega).
\end{equation}
Moreover, the quantity $\lambda_{k,p}^{\rm LS}(\Omega;V_n)$ is a critical value for the functional $\mathcal{G}_n$ restricted to the manifold $\mathcal{S}_{p}(\Omega)\cap W^{1,p}_0(\Omega;V)$, i.e. there exists a function $u_{k,n}\in \mathcal{S}_p(\Omega)\cap W^{1,p}_0(\Omega;V)$  such that 
\[
\begin{split}
\int_\Omega \langle |\nabla u_{k,n}|^{p-2}\,\nabla u_{k,n},\nabla\varphi\rangle\,dx&+\int_\Omega V_n\,|u_{k,n}|^{p-2}\,u_{k,n}\,\varphi\,dx\\
&=\lambda_{k,p}^{\rm LS}(\Omega;V_n)\,\int_\Omega |u_{k,n}|^{p-2}\,u_{k,n}\,\varphi\,dx,\quad \text{for every}\ \varphi\in C^\infty_0(\Omega).
\end{split}
\]
Finally, each function $u_{k,n}$ is such that 
\[
u_{k,n}\in L^\infty(\Omega)\qquad \text{and}\qquad \|u_{k,n}\|_{L^\infty(\Omega)}\le C_{N,p}\,\Big(\lambda_{k,p}^{\rm LS}(\Omega;V_n)\Big)^\frac{N}{p^2}.
\]

\end{prop}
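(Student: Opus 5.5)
Three things must be shown: the convergence \eqref{vaiaffa}, the existence of a critical point at level $\lambda_{k,p}^{\rm LS}(\Omega;V_n)$, and the $L^\infty$ bound. We treat them in that order.

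\emph{Upper bound for \eqref{vaiaffa}.} Since $V_n\ge 0$ and $W^{1,p}_0(\Omega;V)\subseteq W^{1,p}_0(\Omega)$ continuously, a set in $\mathcal{W}_{k,p}(\Omega;V)$ is compact and symmetric in $W^{1,p}_0(\Omega)$. By Lemma \ref{lm:generoditoma} it also has genus at least $k$ there. Hence
\[
\lambda_{k,p}^{\rm LS}(\Omega)\le \lambda_{k,p}^{\rm LS}(\Omega;V_n)\le \lambda_{k,p}^{\rm LS}(\Omega;V_{n+1}),
\]
the second inequality holding because $\varepsilon_n$ decreases. So the limit exists and is at least $\lambda_{k,p}^{\rm LS}(\Omega)$.

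\emph{Lower bound for \eqref{vaiaffa}.} We apply Lemma \ref{lm:giggi} with $X(\Omega)=W^{1,p}_0(\Omega;V)$; its hypotheses follow from Proposition \ref{prop:spazionostro}. Fix $\delta>0$ and $K\in\mathcal{W}_{k,p}(\Omega)$ with $\max_K\int|\nabla\varphi|^p\le \lambda_{k,p}^{\rm LS}(\Omega)+\delta$. The lemma gives $K_\delta\in\mathcal{W}_{k,p}(\Omega;V)$ with the same bound up to $2\delta$. We would like $\max_{K_\delta}\int V|\varphi|^p<\infty$. In the construction, $K_\delta$ is a compact subset of a finite dimensional space spanned by $C^\infty_0(\Omega)$ functions $\psi_i$, on which $\varphi\mapsto \int V|\varphi|^p$ is continuous and therefore bounded by some $A_\delta$. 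Lemma \ref{lm:giggi} as stated only gives compactness in $X(\Omega)$, but that already suffices: $\varphi\mapsto\int V|\varphi|^p$ is continuous on $X(\Omega)$, so it is bounded on $K_\delta$. Then
\[
\lambda_{k,p}^{\rm LS}(\Omega;V_n)\le \lambda_{k,p}^{\rm LS}(\Omega)+2\delta+\varepsilon_n A_\delta .
\]
Letting $n\to\infty$ and then $\delta\to 0$ gives \eqref{vaiaffa}.

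\emph{Criticality.} This is standard Ljusternik--Schnirelmann theory for the even $C^1$ functional $\mathcal{G}_n$ on the $C^1$ symmetric manifold $\mathcal{S}_p(\Omega)\cap W^{1,p}_0(\Omega;V)$; we would cite \cite[Chapter II]{Str} or \cite{Szu}. The only point to check is the Palais--Smale condition, and this is where we expect the main work, though it is routine. For fixed $n$, the norm $\mathcal{G}_n^{1/p}$ is equivalent to the norm of $W^{1,p}_0(\Omega;V)$, since $V_n$ is still strongly confining. So a PS sequence is bounded, and by Proposition \ref{prop:spazionostro} it has a subsequence converging weakly in $W^{1,p}_0(\Omega;V)$ and strongly in $L^p$. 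The strong $L^p$ convergence makes the Lagrange multiplier terms converge. Testing the PS condition with $u_j-u$ and using the $(S_+)$ property of the operator $-\Delta_p+V_n|\cdot|^{p-2}\cdot$, which holds by uniform convexity of the norm, upgrades this to strong convergence. The minmax level is then a critical value, with multiplier equal to the level by $p$-homogeneity. The Euler--Lagrange equation holds for $\varphi\in C^\infty_0(\Omega)\subseteq W^{1,p}_0(\Omega;V)$.

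\emph{$L^\infty$ bound.} Since $V_n\ge 0$, $|u_{k,n}|$ is a nonnegative subsolution of $-\Delta_p w=\lambda\,w^{p-1}$ with $\lambda=\lambda_{k,p}^{\rm LS}(\Omega;V_n)$, by Kato's inequality or by testing with truncations of $u_{k,n}^\pm$. Lemma \ref{lm:staredentro} justifies testing with $\zeta\,\varphi$ for bounded $\varphi$ inside the weighted space, so the Moser iteration of \cite[Lemma 2.3]{BraBriPri0} with $q=p$ applies. With $\|u_{k,n}\|_{L^p(\Omega)}=1$ it gives $\|u_{k,n}\|_{L^\infty(\Omega)}\le C_{N,p}\,\lambda^{N/p^2}$.
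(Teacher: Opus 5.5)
Your proposal is correct and follows essentially the same route as the paper. The components match: Lemma \ref{lm:generoditoma} together with $\mathcal{G}_n\ge\int_\Omega|\nabla\varphi|^p\,dx$ for the bound from below; Lemma \ref{lm:giggi} with $X(\Omega)=W^{1,p}_0(\Omega;V)$, plus boundedness of $\int_\Omega V\,|\varphi|^p\,dx$ on the compact set $K_\delta$, for the bound from above; the Palais--Smale condition via Proposition \ref{prop:spazionostro} and the monotonicity of the $p$-Laplacian; and the $L^\infty$ estimate applied to the nonnegative subsolution $|u_{k,n}|$.

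The only slip is the monotonicity in $n$, which you reversed. Since $\varepsilon_n$ decreases, one has $\mathcal{G}_{n+1}\le\mathcal{G}_n$, hence $\lambda_{k,p}^{\rm LS}(\Omega;V_{n+1})\le\lambda_{k,p}^{\rm LS}(\Omega;V_n)$. This is harmless, because your separate bounds on the $\liminf$ and the $\limsup$ already yield \eqref{vaiaffa}.
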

\begin{proof}
We first observe that 
\[
\mathcal{G}_n(\varphi)\le \max\{1,\varepsilon_0\}\left(\int_{\Omega}|\nabla \varphi|^{p}\,dx +\int_{\Omega}V\,|\varphi|^{p}\,dx\right),\qquad \text{for every}\ \varphi\in W^{1,p}_0(\Omega;V).
\]
Thus, the functional $\mathcal{G}_n$ is finite and $C^1$ on the Banach space $W^{1,p}_0(\Omega;V)$.
We divide the proof in three parts, according to the claim we want to prove.
\vskip.2cm\noindent
{\it Proof of \eqref{vaiaffa}}. We observe that, thanks to Lemma \ref{lm:generoditoma}, we have $\mathcal{W}_{k,p}(\Omega;V)\subseteq \mathcal{W}_{k,p}(\Omega)$. Moreover, 
\[
\mathcal{G}_n(\varphi)\ge \int_\Omega |\nabla \varphi|^p,\qquad \text{for every}\ \varphi\in W^{1,p}_0(\Omega;V).
\]
These immediately imply that 
\[
\lambda_{k,p}^{\rm LS}(\Omega;V_n)\ge \lambda_{k,p}^{\rm LS}(\Omega)\qquad \text{and thus}\qquad \liminf_{n\to\infty}\lambda_{k,p}^{\rm LS}(\Omega;V_n)\ge\lambda_{k,p}^{\rm LS}(\Omega).
\]
In order to prove the reverse inequality, we take $K\in\mathcal{W}_{k,p}(\Omega)$ and $\delta>0$. We apply Lemma \ref{lm:giggi} with $X(\Omega)=W^{1,p}_0(\Omega;V)$: then there exists $K_\delta\in\mathcal{W}_{k,p}(\Omega;V)$ such that
\[
\max_{\varphi\in K_\delta} \int_\Omega |\nabla \varphi|^p\,dx\le \max_{\varphi\in K} \int_\Omega |\nabla \varphi|^p\,dx+\delta.
\]
We also observe that 
\[
\varphi\mapsto \int_\Omega V\,|\varphi|^p\,dx,
\]
is a continuous functional on $W^{1,p}_0(\Omega;V)$. Thus, by compactness (with respect to the norm topology) of $K_\delta$, we have that there exists a constant $C_{\delta, K_\delta}>0$ such that
\[
\int_\Omega V\,|\varphi|^p\,dx\le C_{\delta, K_\delta},\qquad \text{for every}\ \varphi\in K_\delta.
\]
In particular, we have that
\[
\int_\Omega V_n\,|\varphi|^p\,dx\le \varepsilon_n\,C_{\delta, K_\delta},\qquad \text{for every}\ \varphi\in K_\delta.
\]
In conclusion, we get
\[
\lambda^{\rm LS}_{k,p}(\Omega;V_n)\le \max_{\varphi\in K_\delta} \mathcal{G}_n(\varphi)\le \max_{\varphi\in K} \int_\Omega |\nabla \varphi|^p\,dx+\delta+\varepsilon_n\,C_{\delta, K_\delta}.
\]
This implies that 
\[
\limsup_{n\to\infty}\lambda^{\rm LS}_{k,p}(\Omega;V_n)\le \max_{\varphi\in K} \int_\Omega |\nabla \varphi|^p\,dx+\delta.
\]
By taking the infimum over $K\in\mathcal{W}_{k,p}(\Omega)$, this in turn implies 
\[
\limsup_{n\to\infty}\lambda^{\rm LS}_{k,p}(\Omega;V_n)\le \lambda^{\rm LS}_{k,p}(\Omega)+\delta.
\]
This is enough to get the reverse inequality, thanks to the arbitrariness of $\delta>0$.
\vskip.2cm\noindent
{\it Proof of the criticality}. We wish to appeal to classical minmax theorems, for example \cite[Corollary 4.1]{Szu}. We fix $n\in \mathbb N$, we just have to check that the restriction of $\mathcal{G}_n$ to the manifold $\mathcal{S}_p(\Omega)\cap W^{1,p}_0(\Omega;V)$ satisfies the Palais-Smale condition, at every level $\lambda\ge \lambda_{1,p}^{\rm LS}(\Omega;V_n)$. In other words, we have to show that if $\{\varphi_m\}_{m\in\mathbb{N}}$ is such that
\begin{equation}
\label{PS0}
\{\varphi_m\}_{m\in\mathbb{N}}\subseteq \mathcal{S}_p(\Omega)\cap W^{1,p}_0(\Omega;V),
\end{equation}
\begin{equation}
\label{PS1}
\lim_{m\to\infty}\mathcal{G}_n(\varphi_m)=\lambda,
\end{equation}
and
\begin{equation}
\label{PS2}
\lim_{m\to\infty} \left(\sup_{\|\psi\|_{W^{1,p}_0(\Omega;V)}\le1} \bigg|\delta\mathcal{G}_n[\varphi_m](\psi)-\lambda\,p\,\int_\Omega |\varphi_m|^{p-2}\,\varphi_m\,\psi\,dx\bigg|\right)=0,
\end{equation}
then there exists $\overline{\varphi}\in\mathcal{S}_p(\Omega)\cap W^{1,p}_0(\Omega;V)$ such that
\[
\lim_{m\to\infty} \|\varphi_m-\overline{\varphi}\|_{W^{1,p}(\Omega;V)}=0,
\]
up to a subsequence. Here we denoted by $\delta\mathcal{G}_n[\varphi_m]$ the first variation of $\mathcal{G}_n$ computed at $\varphi_m$, i.e.
\[
\delta\mathcal{G}_n[\varphi_m](\psi)=p\int_\Omega \langle |\nabla \varphi_m|^{p-2}\,\nabla \varphi_m,\nabla\psi\rangle\,dx+p\int_\Omega V_n\,|\varphi_m|^{p-2}\,\varphi_m\,\psi\,dx,\quad \text{for every}\ \psi\in W^{1,p}_0(\Omega;V).
\]
We notice that the sequence $\{\varphi_{m}\}_{m\in\mathbb{N}}$ is bounded in $W^{1,p}_0(\Omega; V)$: indeed
\[
\begin{split}
 \|\varphi_m\|_{W^{1,p}_0(\Omega;V)}\le \Big(1+\max\{1,\varepsilon_n^{-1}\}\,\mathcal{G}_n(\varphi_m)\Big)^\frac{1}{p},
\end{split}
\]
and the last term is uniformly bounded in $m$, thanks to \eqref{PS0} and \eqref{PS1}.
 Therefore, up to a subsequence, it converges to some $\overline{\varphi}\in W^{1,p}_0(\Omega; V)$, weakly in $W^{1,p}_0(\Omega; V)$ and strongly in $L^p(\Omega)$, thanks to Proposition \ref{prop:spazionostro}. This already ensures that
\[
\overline{\varphi}\in \mathcal{S}_p(\Omega)\cap W^{1,p}_0(\Omega;V).
\] 
We can now proceed with a standard argument. 
We set for simplicity
\[
\theta_m:=\sup_{\|\psi\|_{W^{1,p}_0(\Omega;V)}\le1} \bigg|\delta\mathcal{G}_n[\varphi_m](\psi)-\lambda\,p\,\int_\Omega |\varphi_m|^{p-2}\,\varphi_m\,\psi\,dx\bigg|.
\]
On account of \eqref{PS2}, this is an infinitesimal sequence of positive numbers. 
In particular, we have
\begin{equation}\label{testo1}
\bigg|\delta\mathcal{G}_n[\varphi_m](\varphi_m-\overline{\varphi})\bigg|  \le \theta_m\,\|\varphi_m-\overline{\varphi}\|_{W^{1,p}_0(\Omega;V)}+ \lambda\,p\,\left|\int_{\Omega}|\varphi_m|^{p-2}\,\varphi_m\, (\varphi_m-\overline{\varphi})\,dx \right|.
\end{equation}
We note that, by H\"older's inequality and \eqref{PS0}, it holds 
\begin{equation}\label{1pezzo}
\left|\int_{\Omega}|\varphi_m|^{p-2}\varphi_m (\varphi_m-\overline{\varphi})\,dx\right|\le\|\varphi_m-\overline{\varphi}\|_{L^p(\Omega)}.
\end{equation}
Since the sequence $\{\varphi_m\}_{m\in \mathbb N}$ is bounded in $W^{1,p}_0(\Omega;V)$ and is strongly converging in $L^{p}(\Omega)$, from \eqref{testo1} and  \eqref{1pezzo}  we get
\[
\lim_{m\to\infty}\bigg|\delta\mathcal{G}_n[\varphi_m](\varphi_m-\overline{\varphi})\bigg|=0.
\] 
On the other hand, by the weak convergence, we have
\[
\lim_{m\to\infty}\bigg|\delta\mathcal{G}_n[\overline{\varphi}](\varphi_m-\overline{\varphi})\bigg|=0,
\]
as well.
By putting them together, we obtain
\[
\lim_{m\to\infty}\bigg|\delta\mathcal{G}_n[\varphi_m](\varphi_m-\overline{\varphi})-\delta\mathcal{G}_n[\overline{\varphi}](\varphi_m-\overline{\varphi})\bigg|= 0.
\]
By recalling the definition of $\mathcal{G}_n$, this permits to infer that
\begin{equation}
\label{testo3/2}
0 =\lim_{m\to\infty}\int_{\Omega}\langle|\nabla \varphi_m|^{p-2}\nabla \varphi_m-|\nabla\overline{\varphi}|^{p-2}\,\nabla\overline{\varphi},\nabla \varphi_m-\nabla \overline{\varphi}\rangle\,dx,
\end{equation}
and 
\begin{equation}
0 =\lim_{m\to\infty}\int_{\Omega} V\,\left(|\varphi_m|^{p-2}\,\varphi_m-|\overline{\varphi}|^{p-2}\,\overline{\varphi}\right)\,(\varphi_m-\overline{\varphi})\,dx.
\end{equation}
We can now appeal to standard monotonicity inequalities for the power function $z\mapsto|z|^p$, in order to get the desired strong convergence in $W^{1,p}_0(\Omega;V)$.
\vskip.2cm\noindent
{\it $L^\infty$ bound}. The fact that each $u_{k,n}$ is globally bounded, together with the claimed $L^\infty$ estimate, can be obtained by observing that $|u_{k,n}|$ is a non-negative weak subsolution of
\[
-\Delta_p v+V_n\,v^{p-1}= \lambda_{k,p}^{\rm LS}(\Omega;V_n)\,v^{p-1},\qquad \text{in}\ \Omega.
\]
By using that $V_n\ge 0$, this in particular implies that $|u_{k,n}|\in W^{1,p}_0(\Omega;V)\subseteq W^{1,p}_0(\Omega)$ weakly satisfies
\[
-\Delta_p |u_{k,n}|\le \lambda_{k,p}^{\rm LS}(\Omega;V_n)\,|u_{k,n}|^{p-1},\qquad \text{in}\ \Omega.
\]
We can thus conclude by appealing to \cite[Lemma 2.3]{BraBriPri0}.
\end{proof}
\begin{rem}
By construction, we have that $V_n\le V_0$ and thus it is not difficult to see that
\[
\lambda_{k,p}^{\rm LS}(\Omega;V_n)\le \lambda_{k,p}^{\rm LS}(\Omega;V_0),\qquad \text{for every}\ n\in\mathbb{N}.
\]
Therefore, from the previous $L^\infty$ bound, we get in particular
\begin{equation}
\label{uniformM}
\|u_{k,n}\|_{L^\infty(\Omega)}\le C_{N,p}\,\Big(\lambda_{k,p}^{\rm LS}(\Omega;V_0)\Big)^\frac{N}{p^2}=:M_k,
\end{equation}
which is uniform with respect to $n$.
\end{rem}
\begin{coro}
\label{coro:uniformdecay}
Let $1<p<\infty$ and let $\Omega\subseteq\mathbb{R}^N$ be an open set such that  $\mathcal{E}_p(\Omega)<+\infty$. Let us suppose that there exists $k\in\mathbb{N}\setminus\{0\}$ such that
\[
\lambda_{k,p}^{\rm LS}(\Omega)<\mathcal{E}_p(\Omega).
\]
With the notation of Proposition \ref{prop:approssimazione}, for every $1\le \ell\le k$ there exists an index $n_\ell\in\mathbb{N}$ and two positive constants $C_\ell$ and $\alpha_\ell$ independent of $n$ such that
\begin{equation}
\label{unifdecay}
|u_{\ell,n}(x)|\le C_\ell\, e^{-\alpha_\ell\,|x|},\qquad \text{for a.\,e.}\ x\in\Omega\ \text{and for every}\ n\ge n_\ell. 
\end{equation}
\end{coro}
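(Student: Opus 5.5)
The plan is to apply Proposition \ref{prop:lambdadecay} to $v=|u_{\ell,n}|$, for $n$ large, and to check that its constants can be chosen independently of $n$. First we need $\lambda_{1,p}(\Omega)>0$. If $\mathcal{E}_p(\Omega)=0$, then $\lambda^{\rm LS}_{k,p}(\Omega)<0$, which is absurd; hence $\mathcal{E}_p(\Omega)>0$ and Lemma \ref{lm:minchione} applies. Next, fix $\ell\le k$. By monotonicity of the minmax levels in $k$ we have $\lambda^{\rm LS}_{\ell,p}(\Omega)\le\lambda^{\rm LS}_{k,p}(\Omega)<\mathcal{E}_p(\Omega)$. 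We then choose $\overline\lambda_\ell:=\frac12\big(\lambda^{\rm LS}_{\ell,p}(\Omega)+\mathcal{E}_p(\Omega)\big)$. By \eqref{vaiaffa} there is $n_\ell$ such that $\lambda^{\rm LS}_{\ell,p}(\Omega;V_n)\le\overline\lambda_\ell$ for every $n\ge n_\ell$.

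For such $n$, the proof of the $L^\infty$ bound in Proposition \ref{prop:approssimazione} shows that $v_n=|u_{\ell,n}|\in W^{1,p}_0(\Omega)$ is a nonnegative subsolution of $-\Delta_p v\le \lambda^{\rm LS}_{\ell,p}(\Omega;V_n)\,v^{p-1}$. This comes from the equation with $V_n\ge0$, after testing with $\eta^p|u_{\ell,n}|$ or a similar admissible test function. Since $v_n^{p-1}\ge0$, it is then also a subsolution of $-\Delta_p v\le\overline\lambda_\ell\,v^{p-1}$. Moreover $\overline\lambda_\ell\ge\lambda_{1,p}(\Omega)$, because $\lambda^{\rm LS}_{\ell,p}(\Omega)\ge\lambda_{1,p}(\Omega)$. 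So Proposition \ref{prop:lambdadecay} applies with the single fixed constant $\lambda=\overline\lambda_\ell$, which lies in $[\lambda_{1,p}(\Omega),\mathcal{E}_p(\Omega))$. It gives
\[
|u_{\ell,n}(x)|\le \mathcal{C}(N,p,\overline\lambda_\ell,\Omega)\,\|u_{\ell,n}\|_{L^p(\Omega)}\,e^{-\alpha(p,\overline\lambda_\ell,\Omega)\,|x|}.
\]
Since $\|u_{\ell,n}\|_{L^p(\Omega)}=1$, we take $C_\ell:=\mathcal{C}$ and $\alpha_\ell:=\alpha$, which do not depend on $n$.

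The main point to verify is that the constants of Proposition \ref{prop:lambdadecay} depend only on $(N,p,\lambda,\Omega)$ and not on the particular subsolution. Inspecting its proof, $\varepsilon_{\Omega,\lambda}$, $r_0$, $C_4$ and $C_1$ depend only on $\lambda$, $p$ and $\mathcal{E}_p(\Omega)$. The $L^\infty$ estimates \eqref{limitan} and \eqref{stimainfinito} also involve only $\lambda$, $N$ and $p$. Replacing the $n$-dependent eigenvalue by the upper bound $\overline\lambda_\ell$ is legitimate, because the subsolution inequality is monotone in $\lambda$ for nonnegative $v$.

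A minor technical point is that the test functions must be admissible in $W^{1,p}_0(\Omega;V)$. This is handled by Lemma \ref{lm:staredentro}, together with the boundedness of $u_{\ell,n}$ given by \eqref{uniformM}.
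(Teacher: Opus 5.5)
Your proposal is correct and follows the paper's proof essentially line by line. You get $\mathcal{E}_p(\Omega)>0$ and hence $\lambda_{1,p}(\Omega)>0$ via Lemma \ref{lm:minchione}, fix one number $\overline\lambda_\ell\in\big(\lambda^{\rm LS}_{\ell,p}(\Omega),\mathcal{E}_p(\Omega)\big)$, use \eqref{vaiaffa} to get $\lambda^{\rm LS}_{\ell,p}(\Omega;V_n)\le\overline\lambda_\ell$ for $n\ge n_\ell$, and then apply Proposition \ref{prop:lambdadecay} with this fixed $\lambda$ to the unit-norm subsolutions $|u_{\ell,n}|$. Two minor inaccuracies do not affect the argument: $r_0$ depends on $\Omega$ itself (on how fast $\lambda_{1,p}(\Omega\setminus\overline{B_R})$ approaches $\mathcal{E}_p(\Omega)$), not only on $\mathcal{E}_p(\Omega)$; and Lemma \ref{lm:staredentro} does not cover the non-compactly supported cut-off $\eta$ and is not needed, since the subsolution inequality extends from nonnegative $C^\infty_0(\Omega)$ to nonnegative $W^{1,p}_0(\Omega)$ test functions by density.
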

\begin{proof}
We first observe that, with our standing hypotheses,  $\mathcal{E}_p(\Omega)>0$. In particular, Lemma \ref{lm:minchione} ensures that $\lambda_{1,p}(\Omega)>0$.  The assumption on $\lambda_{k,p}^{\rm LS}(\Omega)$, together with the property \eqref{vaiaffa}, implies that for every $\ell\in\{1,\dots,k\}$ there exists $n_\ell\in\mathbb{N}$ and a positive number $\lambda_\ell$ with $\lambda_{\ell,p}^{\rm LS}(\Omega)< \lambda_\ell<\mathcal{E}_p(\Omega)$, such that
\[
\lambda_{\ell,p}^{\rm LS}(\Omega;V_n)< \lambda_\ell,\qquad \text{for every}\ n\ge n_\ell,
\]
as well. In particular, we get that $|u_{\ell,n}|\in W^{1,p}_0(\Omega)$ is a nonnegative subsolution of the equation
\[
-\Delta_p u = \lambda_\ell\, u^{p-1},\qquad \text{in}\ \Omega, 
\]
 for every $n\ge n_\ell$. Taking into account that  $u_{\ell,n}$ has unit $L^p(\Omega)$ norm,  by applying Proposition \ref{prop:lambdadecay} we get that  there exist $C=C(N,p,\lambda_\ell, \Omega)$ and  $\alpha=\alpha(p,\lambda_\ell, \Omega)$ such that   $u_{\ell,n}$ satisfies the uniform decay estimate
\[
|u_{\ell,n}(x)|\le C_\ell\,e^{-\alpha_\ell\,|x|},\qquad \text{for a.\,e.}\ x\in\Omega \ \text{and for every}\ n\ge n_\ell.
\]
This concludes the proof.
\end{proof}

\section{Proof of the Main Theorem}
\label{sec:5}
As previously observed,  the assumption \eqref{PL} ensures that  $\lambda_{1,p}(\Omega)>0$.
We then divide the proof in two parts: we first prove that each minmax value below the threshold $\mathcal{E}_p(\Omega)$ is an eigenvalue. Then, we show how to get exponential decay at infinity of the relevant eigenfunctions.
\vskip.2cm\noindent 
{\it Step 1: low eigenvalues}. We can suppose that $\mathcal{E}_p(\Omega)<+\infty$, otherwise we already know that every $\lambda_{k,p}^{\rm LS}(\Omega)$ defines an eigenvalue, as recalled in the Introduction. We wish to show that for every fixed index $1\le \ell\le k$ as in the statement, the sequence $\{u_{\ell,n}\}_{n\in\mathbb{N}}$ constructed in Proposition \ref{prop:approssimazione} converges to an eigenfunction for $\lambda_{k,p}^{\rm LS}(\Omega)$, up to a subsequence. We first observe that by construction we have
\begin{equation}
\label{fratocuggino}
\{u_{\ell,n}\}_{n\in\mathbb{N}}\subseteq \mathcal{S}_p(\Omega)\cap W^{1,p}_0(\Omega)\quad \text{and}\quad \int_\Omega |\nabla u_{\ell,n}|^p\,dx\le \lambda_{\ell,p}^{\rm LS}(\Omega;V_0),\qquad \text{for every}\ n\in\mathbb{N}.
\end{equation}
Thus, the sequence $\{u_{\ell,n}\}_{n\in\mathbb{N}}$ is bounded in $W^{1,p}_0(\Omega)$.  Consequently, it converges weakly in $W^{1,p}_0(\Omega)$ to some limit function $u_\ell\in  W^{1,p}(\Omega)$, up to a subsequence. We can further assure that this convergence is strong in $L^p(\Omega)$, thanks to Corollary \ref{coro:uniformdecay}, which in turn permits to apply the classical Riesz-Fr\'echet-Kolmogorov compactness result. In particular, we  get that $u_\ell\in\mathcal{S}_p(\Omega)$ and thus $u_\ell\not\equiv 0$. We claim that $u_\ell$ is the desired eigenfunction. In order to prove this fact, we will show that we can pass to the limit in the weak formulation of the equation
\begin{equation}
\label{autosalone}
-\Delta_p u_{\ell,n}+V_n\,|u_{\ell,n}|^{p-2}\,u_{\ell,n}= \lambda_{\ell,p}^{\rm LS}(\Omega;V_n)\,|u_{\ell,n}|^{p-2}\, u_{\ell,n},\qquad \text{in}\ \Omega.
\end{equation}
To this aim, let $R>0$ and $\zeta\in C^{\infty}_0(\mathbb{R}^N)$  a cut-off function such that
\[
0\le \zeta\le 1,\quad \zeta=1 \text{ on } B_{R},\quad \zeta=0\ \text{on}\ \mathbb{R}^{N}\setminus B_{2R}, \quad \|\nabla \zeta\|_{\infty}\le \frac{C}{R},
\]
for some dimensional constant $C>0$.
We note that, thanks to \eqref{uniformM},  we can also assume that the sequence $\{u_{\ell,n}\}_{n\in\mathbb{N}}$ is $\ast-$weakly convergent to $u_{\ell}$ in $L^\infty(\Omega)$. In particular, we have $u_\ell\in L^\infty(\Omega)$, as well. 
\par
Hence, we can test the weak formulation of \eqref{autosalone},  
with the function $\zeta\, (u_{\ell, n}-u_{\ell})\in W^{1,p}_0(\Omega;V)$. This is admissible by Lemma \ref{lm:staredentro}. We obtain
\[
\begin{split}
\int_\Omega \langle |\nabla u_{\ell,n}|^{p-2}\,\nabla u_{\ell,n},\nabla (u_{\ell,n}-u_{\ell})\rangle\,\zeta\,dx&=\lambda_{\ell,p}^{\rm LS}(\Omega;V_n)\,\int_\Omega |u_{\ell,n}|^{p-2}\,u_{\ell,n}\,(u_{\ell,n}-u_\ell)\,\zeta\,dx\\
&-\int_\Omega V_n\,|u_{\ell,n}|^{p-2}\,u_{\ell,n}\,(u_{\ell,n}-u_\ell)\,\zeta\,dx\\
&-\int_\Omega \langle |\nabla u_{\ell,n}|^{p-2}\,\nabla u_{\ell,n},\nabla \zeta\rangle\,(u_{\ell,n}-u_\ell)\,dx.
\end{split}
\]
By using the uniform bound \eqref{fratocuggino} and the strong $L^p(\Omega)$ convergence, it is not difficult to see that the first and third terms on the right-hand side converge to $0$, as $n$ goes to $\infty$. As for the second one, we observe that by using the properties of $\zeta$ and the $L^\infty$ bounds \eqref{uniformM}, we have
\[
\begin{split}
\left|\int_\Omega V_n\,|u_{\ell,n}|^{p-2}\,u_{\ell,n}\,(u_{\ell,n}-u_\ell)\,\zeta\,dx\right|&\le \varepsilon_n\,\int_\Omega V\,\zeta\, |u_{\ell, n}|^{p-1}\,|u_{\ell,n}-u_\ell|\,dx\\
&\leq \varepsilon_n\,M_\ell^{p-1}\,\left(M_\ell+\|u_\ell\|_{L^\infty(\Omega)}\right)\,\int_{B_{2R}} V\,dx.
\end{split}
\]
By using that $V\in L^1_{\rm loc}(\mathbb{R}^N)$, we get that this integral converges to $0$, as well.
Thus, we obtain
\[
\lim_{n\to\infty}\int_\Omega \langle |\nabla u_{\ell,n}|^{p-2}\,\nabla u_{\ell,n},\nabla (u_{\ell,n}-u_{\ell})\rangle\,\zeta\,dx=0.
\]
On the other hand, we also get
\[
\lim_{n\to\infty}\int_\Omega \langle |\nabla u_{\ell}|^{p-2}\,\nabla u_{\ell},\nabla (u_{\ell,n}-u_{\ell})\rangle\,\zeta\,dx=0,
\]
which simply follows from the weak convergence in $L^p(\Omega)$ of the gradients. We can thus infer
\[
\lim_{n\to\infty}\int_\Omega \langle |\nabla u_{\ell,n}|^{p-2}\,\nabla u_{\ell,n}-|\nabla u_{\ell}|^{p-2}\,\nabla u_{\ell},\nabla (u_{\ell,n}-u_{\ell})\rangle\,\zeta\,dx=0,
\]
as well. This in particular implies that 
\begin{equation}
\label{1}
\lim_{n\to \infty} \|\nabla u_{\ell,n}-\nabla u_\ell\|_{L^p(B_R)}=0,\qquad \text{for every}\ R>0,
\end{equation}
by standard monotonicity inequalities for the convex power $z\mapsto |z|^p$.
\par
We now wish to pass to the limit in the identity
\[
\int_\Omega \langle |\nabla u_{\ell,n}|^{p-2}\,\nabla u_{\ell,n},\nabla\varphi\rangle\,dx+\int_\Omega V_n\,|u_{\ell,n}|^{p-2}\,u_{\ell,n}\,\varphi\,dx=\lambda_{k,p}^{\rm LS}(\Omega;V_n)\,\int_\Omega |u_{\ell,n}|^{p-2}\,u_{\ell,n}\,\varphi\,dx,
\]
for every $\varphi\in C^\infty_0(\Omega)$. For the leftmost term, it is sufficient to use \eqref{1}, since $\varphi$ is compactly supported. For the rightmost one, we can use the strong $L^p(\Omega)$ convergence and \eqref{vaiaffa}.
Finally, we notice that 
\[
\lim_{n\to\infty} \int_\Omega V_n\,|u_{\ell,n}|^{p-2}\,u_{\ell,n}\,\varphi\,dx=0,\qquad \mbox{for every}\ \varphi\in C^\infty_0(\Omega).
\]
Indeed, again in view of the uniform $L^\infty$ bounds \eqref{uniformM}, we have
\[
\begin{split}
\left|\int_\Omega V_n\,|u_{\ell,n}|^{p-2}\,u_{\ell,n}\,\varphi\,dx\right|&\le \varepsilon_n\,\int_\Omega V\,|u_{\ell,n}|^{p-1}\,|\varphi|\,dx\le \varepsilon_n\,\|\varphi\|_{L^\infty(\Omega)}\,M_\ell^{p-1}\int_{\mathrm{spt}(\varphi)} V\,dx,
\end{split}
\]
and the last quantity converges to $0$, as $n$ goes to $\infty$. 
\par
As a result, by taking the limit as $n$ goes to $\infty$, we get
\[
\int_\Omega \langle |\nabla u_{\ell}|^{p-2}\,\nabla u_{\ell},\nabla\varphi\rangle\,dx=\lambda_{\ell,p}^{\rm LS}(\Omega)\,\int_\Omega |u_{\ell}|^{p-2}\,u_{\ell}\,\varphi\,dx,\qquad \text{for every}\ \varphi\in C^\infty_0(\Omega).
\]
Since $u_\ell\not\equiv0$, we have proved that $u_\ell$ is an eigenfunction of the Dirichlet $p-$Laplacian on $\Omega$, with associated eigenvalue $\lambda_{\ell,p}^{\rm LS}(\Omega)$.
\vskip.2cm\noindent
{\it Step 2: decay at infinity}.  For $\ell\in\{1,\dots,k\}$,  if  $u_{\ell}$ is an  eigenfunction associated  to  $\lambda_{\ell,p}^{\rm LS}(\Omega)$, then  $|u_{\ell}|$  is a subsolution of \eqref{subsol} with $\lambda=\lambda_{\ell,p}^{\rm LS}(\Omega)<\mathcal E_p(\Omega)$. Thanks to Proposition \ref{prop:lambdadecay}, it satisfies  the desired exponential decay estimate. Observe that since eigenfunctions of the $p-$Laplacian are $C^{1,\alpha}$ (see for example the classical reference \cite{DiB} and more recently \cite[Theorem 1.1]{An}), we can assure that the decay estimate actually holds everywhere, and not only almost everywhere.

\section{Some examples}
\label{sec:6}

\subsection{Sets with a massive core}

Let $\Omega\subseteq\mathbb{R}^{N}$ be an unbounded connected open set, with the following property: there exists an open bounded subset $\Omega_0\subsetneq\Omega$ (the ``massive core'') such that
\begin{equation}
\label{massivecore}
\lambda^{\rm LS}_{k, p}(\Omega_0)< \mathcal{E}_p(\Omega),\qquad \text{for some}\ k\in\mathbb{N}\setminus \{0\}.
\end{equation}
By domain monotonicity, the property \eqref{massivecore} in turn implies
\[
\lambda^{\rm LS}_{k,p}(\Omega)<\mathcal{E}_p(\Omega).
\]
Hence our main Main Theorem applies, ensuring  that, for every $\ell\in \{1,\dots, k\}$,  the minmax value $\lambda^{LS}_{\ell, p}(\Omega)$ is an eigenvalue of the Dirichlet $p-$Laplacian on $\Omega$.
\vskip.2cm\noindent
As a simple explicit example of this situation, we may consider an infinite slab with a sufficiently large ball attached to it: such a ball represents the ``massive core''. Indeed, if we take
\[
\Omega_0=B_R\qquad  \text{and}\qquad \Omega = \left(\left(-\frac{1}{2},\frac{1}{2}\right)\times \mathbb{R}^{N-1}\right)\cup \Omega_0,
\]
then, it is rather easy to show that
\[
\lambda^{\rm LS}_{k, p}(\Omega_0) = R^{-p}\,\lambda^{\rm LS}_{k, p}(B_1),\qquad\mathcal{E}_{p}(\Omega) = \lambda_{1,p}\left(\left(-\frac{1}{2},\frac{1}{2}\right)\right)=:\big(\pi_p\big)^p.
\]
Thus, for every given $k\in\mathbb{N}\setminus\{0\}$, condition \eqref{massivecore} is satisfied whenever the radius $R$ is chosen so that
\[
\frac{\Big(\lambda^{\rm LS}_{k,p}(B_1)\Big)^\frac{1}{p}}{\pi_p}< R.
\]
Observe that for this set $\Omega$, the embedding $W^{1,p}_0(\Omega)\hookrightarrow L^p(\Omega)$ is not compact.

\subsection{Curved waveguides}

There is a wide literature devoted to the analysis of the (linear) Spectral Theory of curved waveguides, starting from the pioneering work of Exner and \v{S}eba \cite{ES} (see also the paper \cite{GJ} by Goldstone and Jaffe). 
\par
The landmark result of \cite[Theorem 4.1]{ES} can be summarized as follows: while a straight planar strip $\Omega$ has purely essential spectrum (and thus the bottom of the spectrum $\lambda_1(\Omega)$ is not attained), it is sufficient to bend it ``a little bit'' in order to produce a new domain $\widetilde \Omega$ for which $\lambda_1(\widetilde \Omega)$  is attained by a positive eigenfunction. Thus, the geometric modification produces (at least) an eigenvalue beneath the bottom of the essential spectrum. We refer to \cite{AE} for a related result, aiming at giving a quantitative lower bound on this curvature-induced eigenvalue, as well as to the book \cite{EK} for a systematic study of this phenomenon, considering also the higher dimensional case.
\vskip.2cm\noindent
Phenomena of this type in the nonlinear setting are much less studied:
to the best of our knowledge,  the only results of this type are contained in~\cite{BK} by Baldelli and Krej\v{c}i\v{r}\'ik. More precisely, the authors consider open sets of the form 
\begin{equation}
\label{omegabaldelli}
\Omega= \left\{\Gamma(s)+\sum_{j=1}^{N-1} t_j\, \nu_j(s)\, :\, s\in\mathbb{R},\ \mathbf{t}:=(t_1,\dots, t_{N-1})\in  B' \right\},
\end{equation}
where:
\begin{itemize}
\item $\Gamma: \mathbb{R}\to \mathbb{R}^{N}$ is a $C^{1,1}$ curve such that $|\Gamma'(s)|=1$ for every $s\in\mathbb{R}$, whose curvature function $\kappa$ belongs to $L^{\infty}(\mathbb{R})$;
\vskip.2cm
\item each $\nu_j\in \mathbb{R}\to\mathbb{R}^N$ is almost everywhere differentiable, with 
\[
\langle \nu_j(s),\nu_i(s)\rangle=\delta_{ij},\qquad \langle \nu_j(s),\Gamma'(s)\rangle=0, \qquad \text{for every}\ i,j\in\{1,\dots,N-1\},\ s\in\mathbb{R};
\] 
\item  $B'$ is either a $(N-1)-$dimensional ball or an $(N-1)-$dimensional spherical shell, in both cases centered at the origin.
\end{itemize}
Moreover, the following {\it non-overlapping condition} is assumed
\begin{equation}
\label{nonne}
 \sup_{\mathbf{t}\in B'} |\mathbf{t}|\,\|\kappa\|_{L^{\infty}(\mathbb{R})}<1.
\end{equation}
In a nutshell, the set $\Omega$ is a (sufficiently thin) smooth tubular neighborhood of the image of the curve $\Gamma$.
\par
Then, in \cite[Corollary 1]{BK} it is shown that if $\Omega$ is a {\it bent tube}, i.e. $\kappa\neq 0$ , then the following result holds:
\begin{equation}
\label{baldellicond}
\lim_{|s|\to \infty}\kappa(s)=0\qquad \Longrightarrow \qquad \lambda_{1,p}(\Omega)<\mathcal{E}_p(\Omega).
\end{equation}
We can thus apply our Main Theorem and assures that the value $\lambda_{1,p}(\Omega)$ is attained by some positive eigenfunction, i.e. the nonlinear counterpart of the Exner-\v{S}eba result holds true.
\subsection{The infinite whip}
We now exhibit an example of an unbounded {\it connected} open set $\Omega\subseteq\mathbb{R}^{2}$ with the following properties:
\begin{itemize}
\item  $\mathcal{E}_p(\Omega)<+\infty$
\vskip.2cm
\item it holds 
\[
\lambda^{\rm LS}_{k, p}(\Omega)< \mathcal{E}_p(\Omega),\qquad \text{for every}\ k\in\mathbb{N}\setminus \{0\}.
\]
\end{itemize}  
Thus, according to our Main Theorem, each of the infinitely many minmax values $\lambda^{\rm LS}_{k, p}(\Omega)$ is actually an eigenvalue of the Dirichlet $p-$Laplacian. 
\vskip.2cm\noindent
Such a set will be constructed in the form of a planar curved waveguide (see the previous subsection).
More precisely, we define the $C^{1,1}$ curve
\[
\gamma_{n}(t)=\left\{\begin{array}{rl}\left(t,\dfrac{1+\cos t}{n+1}\right), & \text{if}\ t\in (-\pi , \pi),\\
\\
(t,0),& \text{if}\ |t|\geq \pi,\end{array},
\right.
\]
and first consider a family of open sets $\Omega_{n}$  defined according to  \eqref{omegabaldelli}, by taking     
\[
B'=\left(-\frac{1}{2}, \frac{1}{2}\right)\qquad\text{and}\ \qquad \Gamma_{n}(s)=\gamma_n(\phi_n(s)).
\]
Here $\phi_n:\mathbb{R}\to\mathbb{R}$ is the increasing change of variable such that $\phi_n(0)=0$ and $|\Gamma_n'(s)|=1$, for every $s\in\mathbb{R}$.
Observe that $\Gamma_n$ is a $C^{1,1}$ curve.
In this case, the curvature function $\kappa_n$ is given by
\[
\kappa_n(s)=-\frac {(n+1)^2\,\cos \phi_n(s)}{\big((n+1)^2+\sin^2 \phi_n(s)\big)^\frac{3}{2}}, \quad \text{if}\ s\in (-s_0 , s_0),\qquad \text{where}\ s_0:=\int_0^\pi \sqrt{1+\frac{\sin^2 t}{(n+1)^2}}\,dt,
\]
and it vanishes otherwise.
In particular, 
the non-overlapping condition \eqref{nonne} is satisfied for every $n\in \mathbb N$. We also observe that for every $R\ge 2\,\pi$ we have
\[
 (-\infty,-R) \times \left(-\frac{1}{2}, \frac{1}{2}\right)\subseteq \Omega_n\setminus \overline{B_R}\subseteq \mathbb{R} \times \left(-\frac{1}{2},\frac{1}{2}\right). 
\]

By recalling that for an half-strip of the form $(-\infty,b)\times (-a,a)$ we have
\[
\lambda_{1,p}\big((-\infty,b)\times (-a,a)\big)=\lambda_{1,p}\big((-a,a)\big)=\left(\frac{\pi_p}{2\,a}\right)^p,
\]
we then obtain that 
\[
\lambda_{1,p}(\Omega_n\setminus \overline{B_R})=\big(\pi_p\big)^p,\quad \text{for every}\ R\ge 2\,\pi\qquad \text{and thus}\qquad \mathcal{E}_p(\Omega_n)=\big(\pi_p\big)^p,
\]
thanks to the monotonicity of $\lambda_{1,p}$ with respect to the set inclusion.
Thus, by applying \eqref{baldellicond},  we have  that 
\[
\lambda_{1,p}(\Omega_n)<\mathcal{E}_p(\Omega_n)=\big(\pi_p\big)^p, \qquad  \text{for  every}\ n\in \mathbb N.
\] 

A  continuity argument shows that, for every $n\in \mathbb N,$ there exists $L_n>\pi$ such that  the  bounded open set
\[
\mathcal{O}_n:=\Omega_n\cap \Big((-L_n,L_n)\times \mathbb{R}\Big),
\]
still satisfies
\begin{equation}\label{pezzidifrusta}
\lambda_{1,p}(\mathcal{O}_n)< \pi_p^p \qquad \hbox{for every }n\in \mathbb N.
\end{equation}
The desired open set $\Omega$ is now obtained by suitably gluing together the sets $\mathcal{O}_n$, so to build an ``infinite whip", as in Figure \ref{fig:whip}. \begin{figure}
\includegraphics[scale=.3]{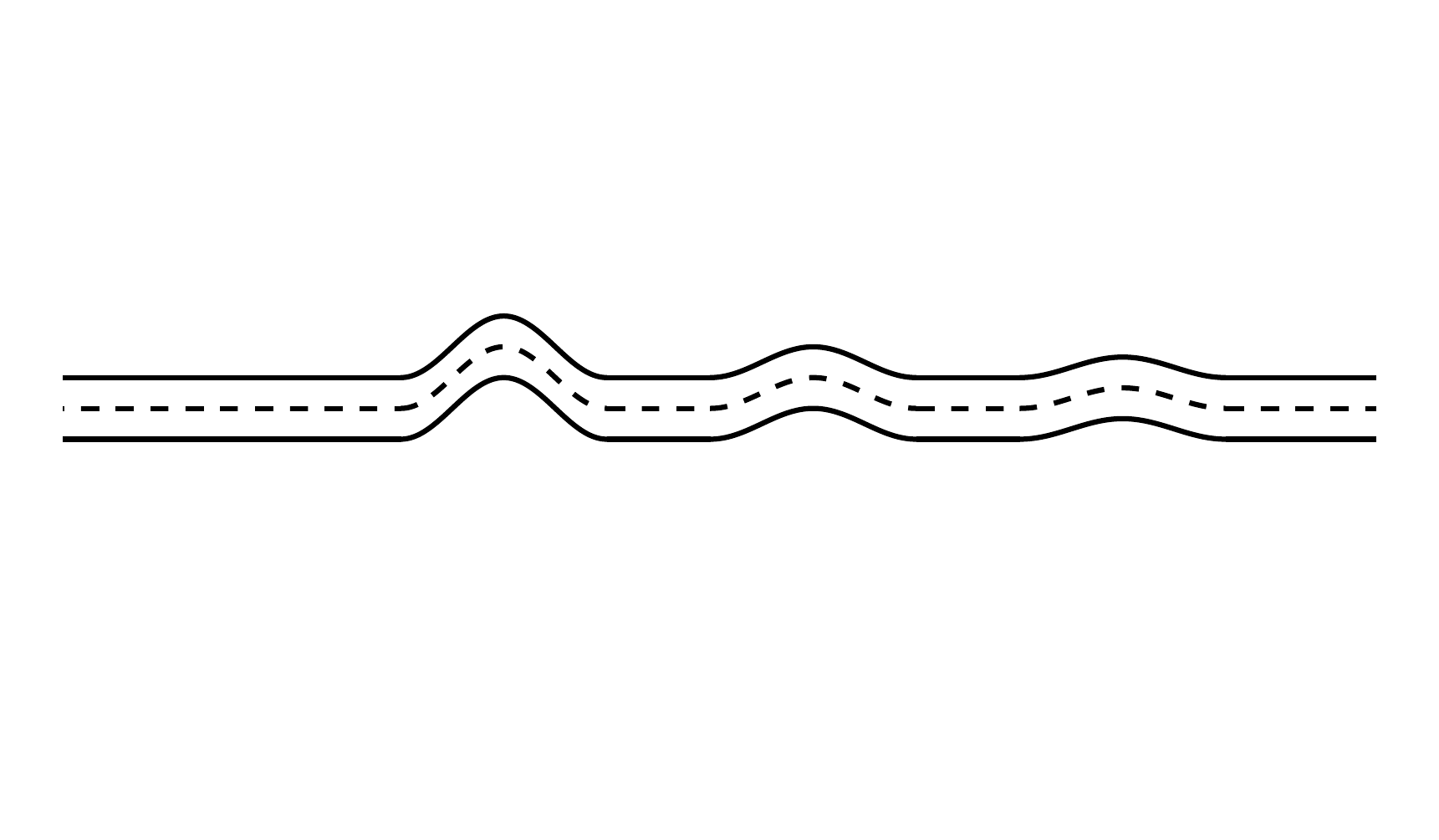}
\caption{The ``infinite whip'' of Lemma \ref{lm:whip}. This can be viewed as the tubular neighborhood of the curve in dashed line, which is ``asymptotically straight'' at infinity.}
\label{fig:whip}
\end{figure}
More precisely, we denote  
\[
\widetilde{\mathcal{O}_n}:=\Omega_n\cap \Big( [-L_n,L_n] \times \mathbb{R}\Big),
\]
and define the translation parameters
\[
\tau_0=L_0\qquad \text{and}\qquad  \tau_n=2\,(L_0+\dots+L_{n-1})+L_n,\quad \text{for}\ n\ge 1.
\]
We then have the following
\begin{lm}[The infinite whip]
\label{lm:whip}
With the previous notation, let us define
\[
\Omega=\left((-\infty, 0)\times \left(-\frac{1}{2}, \frac{1}{2}\right)\right)\cup \left( \bigcup_{n=0}^\infty \left(\widetilde{\mathcal{O}_n}+\tau_n\, \mathbf{e}_1\right) \right),
\]
where $\mathbf{e}_1=(1,0)$. Then we have:
\begin{enumerate}
\item $\mathcal{E}_p(\Omega)=\big(\pi_p\big)^p$;
\vskip.2cm
\item $\lambda^{\rm LS}_{k, p}(\Omega)< \mathcal{E}_p(\Omega)$, for every $k\in\mathbb{N}\setminus \{0\}$.
\end{enumerate}
\end{lm}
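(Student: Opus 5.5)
For (1), I will sandwich $\Omega$ outside large balls between two strips. For the upper bound, for every $R>0$ the set $\Omega\setminus\overline{B_R}$ contains a half-strip $(-\infty,-R)\times(-\tfrac12,\tfrac12)$. Monotonicity then gives $\lambda_{1,p}(\Omega\setminus\overline{B_R})\le \lambda_{1,p}((-\infty,-R)\times(-\tfrac12,\tfrac12))=\pi_p^p$, hence $\mathcal{E}_p(\Omega)\le\pi_p^p$.

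For the lower bound, the key observation is that each bent piece $\widetilde{\mathcal{O}_n}+\tau_n\mathbf e_1$ lies in the strip $\mathbb{R}\times(-\tfrac12,\tfrac12+\tfrac{2}{n+1})$, up to the small vertical bump of height $2/(n+1)$. A cleaner route is to work in curvilinear coordinates. Outside $B_R$, only the pieces with $n\ge n_R$ appear, where $n_R\to\infty$ as $R\to\infty$, together with the straight left tail. So $\Omega\setminus\overline{B_R}$ is a tube of width $1$ around a curve whose curvature satisfies $\sup|\kappa|\le \sup_{n\ge n_R}\|\kappa_n\|_\infty\to 0$. Indeed, $\|\kappa_n\|_\infty\le (n+1)^{-1}$ from the explicit formula: the numerator is $(n+1)^2$ times the size of $\cos\phi_n$, divided by at least $(n+1)^3$.

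In the tubular coordinates $(s,t)$, the Jacobian is $1-t\kappa(s)$ and the metric distortion is controlled by $\sup|\kappa|$. I would therefore prove, for $\varphi\in C^\infty_0(\Omega\setminus\overline{B_R})$,
\[
\int|\nabla\varphi|^p\,dx\ge \frac{(1-\tfrac12\kappa_R)}{(1+\tfrac12\kappa_R)}\int\int|\partial_t\varphi|^p\,ds\,dt\,(1+\tfrac12\kappa_R)\ \ge c(\kappa_R)\,\pi_p^p\int|\varphi|^p\,dx,
\]
with $c(\kappa_R)\to1$. This uses the one-dimensional Poincaré inequality on $(-\tfrac12,\tfrac12)$ on each fibre. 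Letting $R\to\infty$ gives $\mathcal{E}_p(\Omega)\ge\pi_p^p$. Since the pieces are glued along straight portions, the global curve is $C^{1,1}$, the non-overlapping condition holds, and the map is injective. This requires a check that distinct pieces do not overlap, which holds because consecutive pieces occupy disjoint $x_1$-intervals $[\tau_n-L_n,\tau_n+L_n]$.

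For (2), fix $k$. The sets $U_j:=\mathcal{O}_{j}+\tau_{j}\mathbf e_1$, $j=n,\dots,n+k-1$, are pairwise disjoint open subsets of $\Omega$. By \eqref{pezzidifrusta} each admits $\psi_j\in C^\infty_0(U_j)$ with $\|\psi_j\|_{L^p}=1$ and $\int|\nabla\psi_j|^p<\pi_p^p$. Strictly speaking, \eqref{pezzidifrusta} alone does not give a uniform gap, so I will use only the strict inequality and a finite maximum. I take $K=\{\sum_j c_j\psi_j: \sum_j|c_j|^p=1\}$. Since the supports are disjoint, every element of $K$ lies on $\mathcal{S}_p(\Omega)$. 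The set $K$ is symmetric, compact, and homeomorphic by an odd map to $\mathbb S^{k-1}$, so its genus is $k$. Moreover,
\[
\int|\nabla(\textstyle\sum c_j\psi_j)|^p=\sum|c_j|^p\int|\nabla\psi_j|^p\le\max_j\int|\nabla\psi_j|^p<\pi_p^p.
\]
This shows $\lambda^{\rm LS}_{k,p}(\Omega)<\pi_p^p=\mathcal{E}_p(\Omega)$.

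The main obstacle is the lower bound in (1). It requires the uniform curvature control of the glued curve outside $B_R$ and a careful change of variables, or equivalently a $p$-Laplacian adaptation of the argument behind \eqref{baldellicond} from \cite{BK}. An alternative I would try first is to cite \cite{BK}, where the essential threshold $\mathcal{E}_p=\pi_p^p$ for tubes with $\kappa\to0$ at infinity is likely established.
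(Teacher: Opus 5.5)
Your proof of (2) and of the upper bound in (1) is essentially the paper's. The only difference is that you use smooth near-minimizers on the disjoint bounded pieces $\mathcal{O}_j+\tau_j\mathbf{e}_1$, while the paper uses their first eigenfunctions; this is immaterial.

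For the lower bound in (1) you take a genuinely different and heavier route. The paper uses no curvilinear coordinates. It simply observes that
\[
\Omega\setminus\overline{B_{\tau_n}}\subseteq\mathbb{R}\times\Big(-\frac12,\frac12+\frac{2}{n+1}\Big),
\]
because the left tail and every piece of index $m\ge n$ lie in this strip, while the pieces of index $m<n$ lie inside $B_{\tau_n}$. Domain monotonicity then gives $\lambda_{1,p}(\Omega\setminus\overline{B_{\tau_n}})\ge(1+\frac{2}{n+1})^{-p}\pi_p^p$, and one lets $n\to\infty$. You had both ingredients written down: each bent piece sits in $\mathbb{R}\times(-\frac12,\frac12+\frac{2}{n+1})$, and only high-index pieces survive outside large balls. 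So the detour through tubular coordinates is not needed.

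Your curvature argument is nonetheless sound. You have $|\partial_t\tilde\varphi|=|\langle\nabla\varphi,\nu\rangle|\le|\nabla\varphi|$, the Jacobian $1-t\kappa$ lies between $1-\frac12\kappa_R$ and $1+\frac12\kappa_R$, and fibrewise one-dimensional Poincar\'e gives the constant $\frac{1-\kappa_R/2}{1+\kappa_R/2}\to1$. Your display is oddly written, but the factors cancel correctly.

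\emph{What your route buys:} generality. It gives $\mathcal{E}_p\ge\pi_p^p$ for any tube of half-width $\frac12$ whose curvature vanishes at infinity, even one that is never eventually contained in a thin straight strip.

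\emph{What it costs:} you must know that the whole glued whip is a globally injective tubular neighbourhood, so that the change of variables is legitimate. Your remark about disjoint $x_1$-intervals covers only part of this. You also need injectivity within each bent piece. You further need that the fibres from the bent part of each piece stay in $|x_1-\tau_n|<\pi<L_n$, so that the cuts $x_1=\tau_n\pm L_n$ run along vertical fibres. Both are true, but they must be checked.

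The paper's inclusion argument avoids all of this, and no appeal to \cite{BK} is needed for (1).
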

\begin{proof}
Let us first compute $\mathcal{E}_p(\Omega)$. By construction, for every $n\in\mathbb{N}$, we have the following inclusions
\[
(-\infty, -\tau_n)\times \left(-\frac{1}{2},\frac{1}{2}\right)\subseteq \Omega\setminus \overline{B_{\tau_n}} \subseteq\mathbb{R}\times\left(-\frac{1}{2},\frac{1}{2}+\frac{2}{n+1}\right),
\] 
which give at once
\[
\left(1+\frac{2}{n+1}\right)^{-p}\,\big(\pi_p\big)^p\le \lambda_{1,p}(\Omega\setminus \overline{B_{\tau_n}}) \le \big(\pi_p\big)^p.
\]
The latter, taking the limit as $n$ goes to $\infty$, proves that $\mathcal{E}_p(\Omega)=(\pi_{p})^p$.
\vskip.2cm\noindent
We can now show point (2) by exploiting \eqref{pezzidifrusta}. Specifically, we fix $k\in\mathbb{N}\setminus\{0\}$ and for every $1\le n\le k$ we take $U_n\in \mathcal{S}_p(\Omega)$ the unique positive eigenfunction associated to $\lambda_{1,p}(\mathcal{O}_n+\tau_n\mathbf{e}_1)$. We extend it to be zero in $\Omega\setminus(\mathcal{O}_n+\tau_n\mathbf{e}_1))$. Observe that such an eigenfunction exists because the set is bounded.
\par
Thanks to the fact that $\{U_1,U_2,\dots,U_k\}$ have  disjoint supports,  we have that they generate a $k-$dimensional vector subspace of $W^{1,p}_0(\Omega)$
\[
K_k:=\mathrm{Vect}\Big(\left\{U_1,\dots,U_k\right\}\Big).
\]
Then $K_k\cap\mathcal{S}_p(\Omega)$ is a compact symmetric subset of $W^{1,p}_0(\Omega)$, with genus equal to $k$ (see for example \cite[Chapter II, Proposition 5.2]{Str}). Moreover, we observe that
\[
\begin{split}
\sum_{i=1}^k \alpha_i\, U_i \in K_k\cap\mathcal{S}_p(\Omega)&\qquad \Longleftrightarrow \qquad\sum_{i=1}^k  |\alpha_i|^p\,  \int_{\Omega}  |U_i|^p\,dx=1\\
&\qquad \Longleftrightarrow\qquad \alpha=(\alpha_1,\dots,\alpha_k)\in\mathbb{S}^{k-1}_{\ell^p}:=\left\{x\in\mathbb{R}^N\, :\, \sum_{i=1}^k |x_i|^p=1\right\}. 
\end{split}
\]
Therefore, for every $n\in\mathbb{N}$, we have
 \[
\begin{split}
\lambda_{k,p}^{\rm LS}(\Omega)\le \max_{\varphi\in K_k\cap \mathcal{S}_p(\Omega)} \int_\Omega |\nabla \varphi|^p\,dx
&=\max_{\alpha\in \mathbb{S}^{k-1}_{\ell^p}} \sum_{i=1}^k |\alpha_i|^p\,\int_\Omega |\nabla U_i|^p\,dx\\
&=\max_{\alpha\in \mathbb{S}^{k-1}_{\ell^p}} \sum_{i=1}^k |\alpha_i|^p\,  \lambda_{1,p}(\mathcal{O}_i) \,\\
&<\max_{\alpha\in \mathbb{S}^{k-1}_{\ell^p}} \sum_{i=1}^k |\alpha_i|^p \, \pi_p^p=\pi_p^p.
\end{split}
\]
Observe that we used the optimality of each function $U_i$ and  \eqref{pezzidifrusta}. We thus have obtained the desired conclusion.
\end{proof}
\appendix

\section{Technical tools}
\label{sec:A}

The following inequality is well-known, we provide a proof for completeness and in order to get a precise control on the constants.
\begin{lm}\label{lm:triangolare}  For every $1<p<\infty$, there exists a constant $c_p>0$ such that 
\begin{equation}
\label{smets0}
\Big||a+b|^p-|a|^p\Big|\le \varepsilon\,|a|^p+\frac{1}{\varepsilon^{p-1}}\,c_p\,|b|^p,\qquad \text{for every}\ a,b\in\mathbb{R}^N\ \text{and every}\ 0<\varepsilon<1.
\end{equation}
\end{lm}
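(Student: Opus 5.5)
We need to prove: $||a+b|^p-|a|^p|\le \varepsilon|a|^p+\varepsilon^{1-p}c_p|b|^p$ for $0<\varepsilon<1$.

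The plan is to reduce everything to the mean value theorem plus Young's inequality, treating the two regimes $|b|$ small and $|b|$ large with respect to $|a|$ separately only if needed; in fact a single argument should suffice. By the mean value theorem applied to $t\mapsto |a+t\,b|^p$ on $[0,1]$, we have
\[
\Big||a+b|^p-|a|^p\Big|\le p\,\max_{t\in[0,1]}|a+t\,b|^{p-1}\,|b|\le p\,(|a|+|b|)^{p-1}\,|b|.
\]
Then we use the elementary bound $(|a|+|b|)^{p-1}\le \max\{1,2^{p-2}\}\,(|a|^{p-1}+|b|^{p-1})$, so that
\[
\Big||a+b|^p-|a|^p\Big|\le C_p\,|a|^{p-1}\,|b|+C_p\,|b|^p,\qquad C_p=p\,\max\{1,2^{p-2}\}.
\]

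The key step is now Young's inequality with exponents $p/(p-1)$ and $p$, weighted so that the $|a|^p$ term carries exactly the factor $\varepsilon$: writing $C_p\,|a|^{p-1}|b|=\big(\varepsilon^{\frac{p-1}{p}}|a|^{p-1}\big)\big(C_p\,\varepsilon^{-\frac{p-1}{p}}|b|\big)$, we get
\[
C_p\,|a|^{p-1}\,|b|\le \frac{p-1}{p}\,\varepsilon\,|a|^p+\frac{1}{p}\,C_p^p\,\varepsilon^{1-p}\,|b|^p\le \varepsilon\,|a|^p+\frac{C_p^p}{p}\,\varepsilon^{1-p}\,|b|^p.
\]
Finally, the leftover term $C_p|b|^p$ is absorbed using $0<\varepsilon<1$, i.e. $1\le \varepsilon^{1-p}$, giving $C_p|b|^p\le C_p\,\varepsilon^{1-p}|b|^p$. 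Hence \eqref{smets0} holds with $c_p=C_p+C_p^p/p$, which depends only on $p$.

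There is no real obstacle; the only point requiring care is the exact weighting in Young's inequality, so that the coefficient in front of $|a|^p$ is at most $\varepsilon$ (not a $p$-dependent multiple of it), since the proof of Proposition \ref{prop:lambdadecay} uses precisely the factor $1+\varepsilon$. The restriction $\varepsilon<1$ is used only to absorb the pure $|b|^p$ term.
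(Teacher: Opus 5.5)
Your proof is correct and follows essentially the same route as the paper: a first-order bound by $p\,(|a|+|b|)^{p-1}|b|$ (the paper uses the integral form of the fundamental theorem of calculus instead of the mean value theorem), a weighted Young inequality, an elementary convexity bound on powers, and $\varepsilon<1$ to absorb the leftover $|b|^p$ term. The only difference is the order of the steps. The paper applies Young to $(|a|+t|b|)^{p-1}|b|$ first and then splits $(|a|+t|b|)^p\le 2^{p-1}(|a|^p+|b|^p)$, while you split $(|a|+|b|)^{p-1}$ first and apply Young only to the cross term $|a|^{p-1}|b|$; this gives a different but equally valid constant $c_p$.
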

\begin{proof} 
We first write
\[
|a+b|^p-|a|^p=\int_0^1 \frac{d}{dt}|a+t\,b|^p\,dt=p\,\int_0^1 \langle|a+t\,b|^{p-2}\,(a+t\,b),b\rangle\,dt.
\]
By using Cauchy-Schwarz inequality, we thus obtain
\[
\Big||a+b|^p-|a|^p\Big|\le p\,\int_0^1 |a+t\,b|^{p-1}\,|b|\,dt\le p\,\int_0^1 (|a|+t\,|b|)^{p-1}\,|b|\,dt. 
\]
By applying Young's inequality on the last term,  we deduce that
\[
\begin{split}
\Big||a+b|^p-|a|^p\Big|&\le (p-1)\,\delta\,\int_0^1 (|a|+t\,|b|)^p\,dt+\delta^{1-p}\,|b|^p\\
&\le 2^{p-1}\delta\,(p-1)\,(|a|^p+|b|^p)+\delta^{1-p}\,|b|^p.
\end{split}
\]
for every $\delta>0$. In the second inequality, we also used the convexity of the function $\tau\mapsto\tau^p$. Finally, by taking $\varepsilon>0$ and choosing $\delta$ so that
\[
2^{p-1}\,(p-1)\,\delta=\varepsilon\,.
\]
we get the conclusion.
\end{proof}

\begin{lm}
\label{lm:generoditoma}
Let $(X,\|,\cdot\,\|_X)$ and $(Y,\|,\cdot\,\|_Y)$ be two Banach spaces. Let us suppose that $X\subseteq Y$ with continuous inclusion, i.e. there exists a constant $C>0$ such that
\[
\|x\|_Y\le C\,\|x\|_X,\qquad \text{for every}\ x\in X.
\]
Then for every closed set $K\subseteq X$ which is symmetric and compact in the norm topology of $X$, we have
\[
\gamma(K;X)=\gamma(K;Y).
\]
\end{lm}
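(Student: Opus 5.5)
The plan is to prove the two inequalities $\gamma(K;X)\ge\gamma(K;Y)$ and $\gamma(K;X)\le\gamma(K;Y)$ separately. Both rest on one topological observation: on $K$, the norm topology of $X$ and the topology induced by $Y$ coincide.

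First, I note that $K$ is a symmetric subset of $Y$ and that it is compact (hence closed) in $Y$. Indeed, the inclusion $\iota:X\to Y$ is continuous, so $\iota(K)=K$ is compact in $Y$. Therefore $\gamma(K;Y)$ is well defined.

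Next, I consider the restriction $\iota|_K:(K,\|\cdot\|_X)\to(K,\|\cdot\|_Y)$. It is a continuous bijection from a compact space onto a Hausdorff space, so it is a homeomorphism. Consequently, a map $f:K\to\mathbb{S}^{k-1}$ is continuous for the $X$-topology if and only if it is continuous for the $Y$-topology. Oddness is a purely set-theoretic property and does not depend on the topology.

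By definition \eqref{krasnodar}, the genus is the infimum of those $k$ for which a continuous odd map $K\to\mathbb{S}^{k-1}$ exists. Since the two classes of admissible maps coincide, the two infima coincide, and this holds also with the conventions $+\infty$ and $\gamma(\emptyset)=0$.

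If one prefers to avoid the homeomorphism argument for one direction, the inequality $\gamma(K;X)\le\gamma(K;Y)$ is immediate: any $Y$-continuous odd map composed with $\iota$ is $X$-continuous. The reverse inequality is the only point that needs compactness, through the fact that $\iota|_K^{-1}$ is continuous. Nothing else is expected to be an obstacle.
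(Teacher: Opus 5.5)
Your proposal is correct and follows the same route as the paper: both reduce the claim to the fact that the $X$- and $Y$-topologies coincide on $K$, so the continuous odd maps $K\to\mathbb{S}^{k-1}$ are the same for both. The only difference is that you cite the standard fact that a continuous bijection from a compact space onto a Hausdorff space is a homeomorphism, whereas the paper checks directly, by a subsequence argument, that a sequence in $K$ converges in $X$ if and only if it converges in $Y$.
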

\begin{proof}
We first observe that each sequence $\{x_n\}_{n\in\mathbb{N}}\subseteq K$ converges in the norm topology of $X$ if and only if it converges in the norm topology of $Y$. One of the two implications is clear, due to the continuity of the inclusion. On the other hand, let us suppose that
\[
\lim_{n\to\infty} \|x_n-\overline{x}\|_Y=0.
\]
By compactness of $K$, we have that $\{x_n\}_{n\in\mathbb{N}}$ admits a subsequence $\{x_{n_k}\}_{k\in\mathbb{N}}$ and point $\overline{y}\in K$ such that
\[
\lim_{k\to\infty} \|x_{n_k}-\overline{y}\|_X=0.
\]
Observe that 
\[
\|\overline{x}-\overline{y}\|_Y\le \lim_{k\to\infty} \|\overline{x}-x_{n_k}\|_Y+C\,\lim_{k\to\infty} \|\overline{y}-x_{n_k}\|_X=0,
\]
so that $\overline{y}=\overline{x}$. Moreover, we can repeat the previous argument for every subsequence $\{x_{n_k}\}_{k\in\mathbb{N}}\subseteq\{x_n\}_{n\in\mathbb{N}}$ and obtain that it always admits a further subsequence, still converging in $X$ to the same limit $\overline{x}$. This finally shows that we have convergence in $X$ of the full sequence, i.e.
\[
\lim_{n\to\infty} \|x_n-\overline{x}\|_X=0,
\]
as well.
\par
Thanks to previous property, for every $k\in\mathbb{N}$ the following fact holds: a function $f:K\to\mathbb{S}^{k-1}$ is continuous with respect to the norm topology of $X$ if and only if it is continuous with respect to that of $Y$. In light of the definition of Krasnosel'ski\u{\i} genus, this is enough to conclude.
\end{proof}

\section{The case $p=2$}
\label{sec:B}

In the Hilbertian case $p=2$, we use the following distinguished notation
\[
\lambda_1(\Omega)=\inf_{\varphi\in \mathcal{S}_2(\Omega)\cap W^{1,2}_0(\Omega)} \int_\Omega |\nabla \varphi|^2\,dx,
\]
as in the Introduction.
In this section, we show that the minmax values constructed in the Main Theorem coincide with the usual Courant-Fischer minmax values
\[
\lambda_{k}(\Omega)=\inf\left\{ \max_{u\in K\cap \mathcal{S}_2(\Omega)} \int_\Omega|\nabla u|^2\,dx\,:\, K \subseteq W^{1,2}_0(\Omega)\ \mbox{subspace with}\ \dim K=k \right\},
\]
recalled in the Introduction. Thus, for $p=2$ our result boils down to the classical case. 

\begin{prop}
\label{prop:tabaccaio}
Let $\Omega\subseteq\mathbb{R}^N$ be an open set and let us suppose that there exists $k\in\mathbb{N}\setminus\{0\}$ such that
\begin{equation}\
\label{PLC}
\lambda_{k,2}^{\rm LS}(\Omega)<\mathcal{E}(\Omega):=\sup_{R>0} \lambda_1(\Omega\setminus \overline{B_R}).
\end{equation}
Then, for every $\ell\in\{1,\dots,k\}$ we have
\[
\lambda_{\ell,2}^{\rm LS}(\Omega)=\lambda_{\ell}(\Omega).
\]
\end{prop}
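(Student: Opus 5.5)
One inequality is immediate. If $K$ is a $k$-dimensional subspace of $W^{1,2}_0(\Omega)$, then $K\cap\mathcal{S}_2(\Omega)$ is symmetric and compact, since $K$ is finite dimensional and the $L^2$ unit sphere inside it is closed and bounded. Its genus equals $\ell$ when $\dim K=\ell$, by the same fact from \cite[Chapter II, Proposition 5.2]{Str} used in Lemma \ref{lm:whip}. Hence $K\cap\mathcal{S}_2(\Omega)\in\mathcal{W}_{\ell,2}(\Omega)$, and comparing with the Courant-Fischer formula gives
\[
\lambda_{\ell,2}^{\rm LS}(\Omega)\le\lambda_\ell(\Omega),\qquad \text{for every}\ \ell.
\]
This requires no hypothesis, and in particular shows that $\lambda_\ell(\Omega)$ is finite.

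For the reverse inequality I will use the Main Theorem together with orthogonality. Fix $\ell\le k$. By the Main Theorem, each $\lambda_{j,2}^{\rm LS}(\Omega)$ with $j\le k$ is an eigenvalue of the Dirichlet Laplacian lying strictly below $\mathcal{E}(\Omega)$. By Persson's theorem \eqref{perso} the spectrum below $\mathcal{E}(\Omega)$ is discrete, and the Courant-Fischer values below $\mathcal{E}(\Omega)$ list these eigenvalues with multiplicity, as in \cite[Theorems 10.2.1 \& 10.2.2]{BS}. Let $\mu_1\le\mu_2\le\dots$ be this list, with $L^2$-orthonormal eigenfunctions $e_1,e_2,\dots$. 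Suppose, for contradiction, that $\lambda_{\ell,2}^{\rm LS}(\Omega)<\lambda_\ell(\Omega)$. Then there is $A\in\mathcal{W}_{\ell,2}(\Omega)$ with
\[
\max_{A}\int_\Omega|\nabla\varphi|^2\,dx<\lambda_\ell(\Omega).
\]
Let $P$ be the $L^2$-orthogonal projection onto $E=\mathrm{span}\{e_1,\dots,e_{\ell-1}\}$, and consider the odd continuous map $A\ni\varphi\mapsto P\varphi\in E\cong\mathbb{R}^{\ell-1}$. Since $\gamma(A)\ge\ell$, this map cannot avoid the origin: otherwise $\varphi\mapsto P\varphi/|P\varphi|$ would be an odd continuous map $A\to\mathbb{S}^{\ell-2}$, forcing $\gamma(A)\le\ell-1$. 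So some $\varphi\in A$ is $L^2$-orthogonal to $e_1,\dots,e_{\ell-1}$.

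The crux is to show that such a $\varphi$ satisfies $\int_\Omega|\nabla\varphi|^2\,dx\ge\lambda_\ell(\Omega)$; this gives the contradiction. I will use the spectral theorem for the Friedrichs Dirichlet Laplacian $H$. Since $\lambda_\ell(\Omega)$ is finite, either $\lambda_\ell(\Omega)=\mu_\ell<\mathcal{E}(\Omega)$, or $\lambda_\ell(\Omega)=\mathcal{E}(\Omega)$ when there are fewer than $\ell$ eigenvalues below the threshold. In both cases the spectral measure of $\varphi$ is supported in $[\lambda_\ell(\Omega),\infty)$, because $\varphi\perp E$ and $E$ is exactly the spectral subspace of $H$ for $[0,\lambda_\ell(\Omega))$. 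Therefore
\[
\int_\Omega|\nabla\varphi|^2\,dx=\|H^{1/2}\varphi\|^2\ge\lambda_\ell(\Omega)\,\|\varphi\|_{L^2(\Omega)}^2=\lambda_\ell(\Omega).
\]
This contradicts the choice of $A$, so $\lambda_{\ell,2}^{\rm LS}(\Omega)\ge\lambda_\ell(\Omega)$ and equality holds.

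The main obstacle is the identification used in the last step: the spectral subspace of $H$ below $\lambda_\ell(\Omega)$ must be exactly the span of the first $\ell-1$ Courant-Fischer eigenfunctions. This needs \eqref{perso} to rule out essential spectrum below $\mathcal{E}(\Omega)$, and it needs care with multiplicities when $\mu_{\ell-1}=\mu_\ell$. In that case I will instead take $E$ to be the full spectral subspace for $[0,\lambda_\ell(\Omega))$, whose dimension is at most $\ell-1$, and run the same genus argument with that $E$.
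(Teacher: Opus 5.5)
Your proof is correct, but it takes a genuinely different route from the paper's. The paper deliberately avoids the spectral resolution of the Dirichlet Laplacian on $\Omega$, which may have continuous spectrum. It approximates by the confining problems $-\Delta+V_n$, for which $\lambda^{\rm LS}_{\ell,2}(\Omega;V_n)=\lambda_\ell(\Omega;V_n)$ is known from the discrete-spectrum case. It takes $L^2$-orthonormal approximating eigenfunctions $u_{1,n},\dots,u_{\ell,n}$ and uses the strong $L^2$ convergence from the proof of the Main Theorem; this is where \eqref{PLC} enters. The limits are orthonormal eigenfunctions $u_1,\dots,u_\ell$ on $\Omega$, and their span is an explicit Courant--Fischer competitor with energy at most $\lambda^{\rm LS}_{\ell,2}(\Omega)$.

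You instead work with the self-adjoint operator $H$ of the form $\int_\Omega|\nabla\varphi|^2\,dx$ on $W^{1,2}_0(\Omega)$. You combine it with the intersection property of the genus: a set of genus at least $\ell$ meets the $L^2$-orthogonal complement of any subspace of dimension at most $\ell-1$. This is shorter, and more general than you claim. The bound $\dim\mathrm{Ran}\,\mathbf{1}_{[0,\lambda_\ell(\Omega))}(H)\le\ell-1$ follows directly from the Courant--Fischer definition. If that spectral subspace had dimension at least $\ell$, any $\ell$-dimensional subspace of it would lie in $W^{1,2}_0(\Omega)$. Every nonzero element there has spectral measure supported in $[0,\lambda_\ell(\Omega))$, so this subspace would satisfy $\max\int_\Omega|\nabla\varphi|^2\,dx<\lambda_\ell(\Omega)$ on its $L^2$ unit sphere, a contradiction.

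Hence you need neither the Main Theorem, nor Persson's theorem, nor the description of the Courant--Fischer values via the essential spectrum. The multiplicity issue you flag is harmless: only $\varphi\perp\mathrm{Ran}\,\mathbf{1}_{[0,\lambda_\ell(\Omega))}(H)$ matters, and $\mathrm{span}\{e_1,\dots,e_{\ell-1}\}$ contains this subspace whenever it is defined. Your argument therefore gives $\lambda^{\rm LS}_{\ell,2}(\Omega)=\lambda_\ell(\Omega)$ for every $\ell$ and every open set, without \eqref{PLC}.

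What the paper's route buys in exchange is threefold. It uses only operators with discrete spectrum. It stays inside the same approximation scheme used for the nonlinear problem. And it exhibits the limits of the approximating eigenfunctions as an orthonormal family realizing $\lambda_1(\Omega),\dots,\lambda_\ell(\Omega)$.
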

\begin{proof}
We can limit ourselves to the case $\mathcal{E}(\Omega)<+\infty$, otherwise the result is already contained in \cite[Theorem A.2]{BraParSqu}.
We may suppose that $k\ge 2$, otherwise the claim is straightforward. We start by observing that for every vector subspace $K\subseteq W^{1,2}_0(\Omega)$ having dimension $\ell$, it holds
\[
K\cap\mathcal{S}_2(\Omega)\in \mathcal{W}_{\ell,2}(\Omega),
\] 
i.e. $K\cap\mathcal{S}_2(\Omega)$ is a compact symmetric subset of $W^{1,2}_0(\Omega)$, with genus equal to $\ell$ (see for example \cite[Chapter II, Proposition 5.2]{Str}). This entails that 
\[
\lambda_{\ell,2}^{\rm LS}(\Omega)\le \lambda_\ell(\Omega).
\]
In order to prove the reverse inequality, we will use an approximation argument\footnote{This route will avoid considering the Spectral Resolution of the Dirichlet-Laplacian on $\Omega$ (see for example \cite[Chapter 6]{BS}). Indeed, this last operator could have a continuous part in its  spectrum, under the standing assumptions on the open set. On the contrary, the approximation argument will only need the spectral properties of operators having a discrete spectrum. The reader with a sufficiently good expertise in Spectral Theory will certainly find our argument a bit akward.}. Namely, let us consider the same operators of Proposition \ref{prop:approssimazione}, i.e.
\[
\varphi\mapsto-\Delta\varphi+V_n\,\varphi,
\]
with homogeneous Dirichlet boundary conditions. The quadratic form naturally associated to this operator is given by the functional $\mathcal{G}_n$, defined in Proposition \ref{prop:approssimazione}. By classical Spectral Theory, for every fixed $n\in\mathbb{N}\setminus\{0\}$, the compactness of the embedding $W^{1,2}_0(\Omega;V)\hookrightarrow L^2(\Omega)$ guarantees that such an operator has a discrete spectrum, made of a diverging sequence of positive eigenvalues, each one with finite multiplicity. They can be characterized through the Courant-Fischer formula
\[
\lambda_{\ell}(\Omega;V_n):=\inf\left\{ \max_{u\in K\cap \mathcal{S}_2(\Omega)} \mathcal{G}_n(u)\,:\, K \subseteq W^{1,2}_0(\Omega;V)\ \mbox{subspace with}\ \dim K=\ell \right\}.
\]
Moreover, the infimum on the right-hand side is attained by
\[
K_\ell=\mathrm{Vect}\Big(\left\{u_{1,n},\dots,u_{\ell,n}\right\}\Big),
\]
i.e. by the vector subspace generated by the first $\ell$ eigenfunctions of the operator. In fact, these eigenfunctions can be chosen so that 
\[
\int_\Omega u_{i,n}\,u_{j,n}\,dx=\delta_{ij},\qquad \text{for every}\ n\in\mathbb{N},\ i,j\in\{1,\dots,\ell\},
\]
and the following characterization also holds:
\[
\lambda_{\ell}(\Omega;V_n)=\min_{\varphi\in \mathcal{S}_2(\Omega)\cap W^{1,2}_0(\Omega;V)}\left\{\mathcal{G}_n(\varphi)\,:\, \int_\Omega \varphi\,u_{i,n}\,dx=0,\ i\in\{1,\dots,\ell-1\}\right\}.
\]
We are now ready to conclude the proof: according to \cite[Theorem A.2]{BraParSqu}, for every $\ell\in \mathbb{N}\setminus\{0\}$ we have
\[
\lambda_{\ell,2}^{\rm LS}(\Omega;V_n)=\lambda_{\ell}(\Omega;V_n)=\int_\Omega |\nabla u_{\ell,n}|^2\,dx+\int_\Omega V_n\,|u_{\ell,n}|^2\,dx,
\]
i.e. the equality claimed in the statement holds true for the approximating operator.
By the proof of the Main Theorem, we know that each $\{u_{i,n}\}_{n\in\mathbb{N}}$ converges strongly in $L^2(\Omega)$ to an eigenfunction $u_i$ of the Dirichlet-Laplacian on $\Omega$, up to a subsequence. In particular, we still have 
\begin{equation}
\label{ortaggi}
\int_\Omega u_{i}\,u_{j}\,dx=\delta_{ij},\qquad \text{for every}\ i,j\in\{1,\dots,\ell\}.
\end{equation}
By virtue of Proposition \ref{prop:approssimazione} and by the lower semicontinuity of the Dirichlet integral, we thus get for every $\ell\in\{1,\dots,k\}$
\[
\begin{split}
\lambda_{\ell,2}^{\rm LS}(\Omega)=\lim_{n\to\infty} \lambda_{\ell,2}^{\rm LS}(\Omega;V_n)\ge\lim_{n\to\infty} \int_\Omega |\nabla u_{\ell,n}|^2\,dx\ge \int_\Omega |\nabla u_\ell|^2\,dx\\
\end{split}
\]
On the other hand, by defining the $\ell-$dimensional vector subspace
\[
\overline{K}=\mathrm{Vect}\Big(\left\{u_{1},\dots,u_{\ell}\right\}\Big),
\]
we get
\[
\begin{split}
\lambda_\ell(\Omega)\le \max_{\varphi\in \overline{K}\cap\mathcal{S}_2(\Omega)} \int_\Omega |\nabla\varphi|^2\,dx&=\max_{\alpha=(\alpha_1,\dots,\alpha_\ell)\in \mathbb{S}^{\ell-1}} \int_\Omega \left|\sum_{i=1}^\ell \alpha_i\,\nabla u_i\right|^2\,dx\\
&=\max_{\alpha=(\alpha_1,\dots,\alpha_\ell)\in \mathbb{S}^{\ell-1}} \sum_{i=1}^\ell |\alpha_i|^2\,\int_\Omega |\nabla u_i|^2\,dx\le \int_\Omega |\nabla u_\ell|^2\,dx,
\end{split}
\]
thanks to the orthogonality relations \eqref{ortaggi} and the fact that each $u_i$ is an eigenfunction. In turn, we get the reverse estimate 
\[
\lambda_{\ell,2}^{\rm LS}(\Omega)\ge \lambda_\ell(\Omega),
\]
as desired.
\end{proof}

\end{document}